\documentclass[a4paper]{article} % A4 stock; 'article' is standard for papers/notes

\usepackage[
a4paper,                          % A4 stock
margin=28mm,                      % Uniform text margins
marginparwidth=22mm,              % Wide enough for todonotes
includehead, includefoot          % Count header/footer in text area (optional)
]{geometry}

\usepackage{xcolor}                 % Colors for text, rules, links, etc.

\usepackage{multicol}               % \begin{multicols}{2} ... \end{multicols}

\usepackage{longtable,booktabs}     % Long tables + high-quality horizontal rules

\usepackage{graphicx}               % \includegraphics, \graphicspath
\graphicspath{{figures/}}           % (Optional) default path for figures

\usepackage{amsmath}                % Math environments (align, gather, etc.)

\renewcommand{\Re}{\operatorname{Re}}

\renewcommand{\Im}{\operatorname{Im}}

\usepackage{amssymb}                % Math symbols (\mathbb, \mathfrak, etc.)
\usepackage{amsthm}                 % \newtheorem environments
\usepackage{mathtools}              % (Optional) amsmath enhancements (\coloneqq, \DeclarePairedDelimiter, ...)

\usepackage{mathrsfs}               % Provides \mathscr (formal script letters)

	\usepackage{silence}                % Selectively suppress warnings
	\usepackage[T1]{fontenc}                 % Proper glyphs & hyphenation in PDF
	\usepackage[utf8]{inputenc}              % UTF-8 source for pdfLaTeX
	\usepackage{lmodern}                     % Latin Modern fonts (vector)
	\usepackage[nopatch=footnote]{microtype} % Improves justification; no footnote patch (avoid conflicts)
	
	\usepackage{ragged2e}
	
	\usepackage[normalem]{ulem}         % \sout{}, but keep \emph behavior normal
	
	\usepackage{titlesec}               % Custom section formats
	\usepackage{setspace}               % \onehalfspacing, \doublespacing etc.
	\usepackage{indentfirst}            % Indent first paragraph of each section
	
	\usepackage[inline]{enumitem}
	
	\newcommand{\papertitle}{On the Approximation of the Unitary Operator Group Associated with a Rotation Matrix and Its Applications to Abstract Hyperbolic Equations}
	
	\newcommand{\papershorttitle}{Unitary Group Approximation and Abstract Hyperbolic Equations}
	
	\newcommand{\authorRogava}{Jemal Rogava}
	\newcommand{\authorVashakidze}{Zurab Vashakidze}
	
	\newcommand{\orcidRogava}{0000-0001-9460-4283}
	
	\newcommand{\orcidVashakidze}{0000-0001-8736-6213}
	
	\newcommand{\paperauthors}{\authorRogava; \authorVashakidze}
	
	\newcommand{\runauthors}{\authorRogava{} \textbullet\ \authorVashakidze} % Authors in header
	\newcommand{\runningtitle}{\papershorttitle} % Adjust to a shorter variant if needed
	
	\usepackage{fancyhdr}               % Custom header/footer management
	\AtBeginDocument{\thispagestyle{empty}} % Also set in titlepage below; redundancy is harmless
	
	\fancypagestyle{plain}{%
		\fancyhf{}
		\fancyhead[C]{%
			\ifodd\value{page}%
			\small \runauthors%
			\else%
			\small \runningtitle%
			\fi%
		}
		\fancyfoot[C]{\thepage}

	}
	
	\definecolor{linkColor}{RGB}{155,0,119} % Custom link color (accessible purple)
	\usepackage[
	pdfstartview  = {FitH},          % Open PDF with page width fitting window
	pdftitle      = {\papertitle},   % PDF metadata: title (from macro)
	pdfauthor     = {\paperauthors}, % PDF metadata: author(s) (from macro)
	pdfusetitle,                     % Also use \title and \author for PDF metadata
	colorlinks    = true,            % Colorize links instead of boxed links
	linkcolor     = linkColor,       % Section/cross-ref link color
	citecolor     = linkColor,       % Citation link color
	urlcolor      = linkColor,       % URL link color
	hypertexnames = false            % Avoid duplicate anchors (safer TOC/refs/cross-refs)
	]{hyperref}                         % Load hyperref once with options
	\usepackage{orcidlink} % Official ORCID iD mark, works with pdfLaTeX/XeLaTeX/LuaLaTeX
	
	\usepackage[nameinlink]{cleveref}   % Smart cross-references; auto “Fig.”, “Sec.”, etc.
	\crefname{equation}{eq.}{eqs.}      % "eq. (1)" / "eqs. (1) and (2)"
	\Crefname{equation}{Eq.}{Eqs.}      % "Eq. (1)" at sentence start
	\crefname{theorem}{theorem}{theorems}
	\Crefname{theorem}{Theorem}{Theorems}
	\crefname{figure}{figure}{figures}
	\Crefname{figure}{Figure}{Figures}
	\crefname{table}{table}{tables}
	\Crefname{table}{Table}{Tables}
	
	\theoremstyle{plain}
	
	\newtheorem{theorem}{Theorem}[section]      % Title: Theorem (numbered by section)
	
	\newtheorem*{theorem*}{Theorem}             % Title: Theorem* (unnumbered)
	
	\newtheorem{lemma}[theorem]{Lemma}          % Title: Lemma (shares counter)
	
	\newtheorem*{lemma*}{Lemma}                 % Title: Lemma* (unnumbered)
	
	\newtheorem*{corollary*}{Corollary}         % Title: Corollary* (unnumbered)
	
	\theoremstyle{definition}
	
	\newtheorem*{definition*}{Definition}        % Title: Definition* (unnumbered)
	
	\newtheoremstyle{boldremark} % Name
	{3pt}                      % Space above: vertical skip before the environment
	{3pt}                      % Space below: vertical skip after the environment
	{\normalfont}              % Body font: upright/roman text (not italic)
	{}                         % Indent amount: empty = no indentation
	{\bfseries}                % Head font: bold heading (e.g., "Remark 1.2.")
	{.}                        % Punctuation after head: prints a period after heading
	{.5em}                     % Space after head: spacing between head and body text
	{}                         % Head specification: empty = default layout
	
	\theoremstyle{boldremark}
	
	\newtheorem{remark}[theorem]{Remark}        % Title: Remark
	
	\newtheorem*{remark*}{Remark}               % Title: Remark* (unnumbered)
	
	\theoremstyle{definition}
	
	\newtheorem*{test*}{Test}                   % Title: Test* (unnumbered)
	
	\newcommand{\refitem}[2]{%
		\hyperref[#2]{#1~\ref*{#2}}% \ref* prints the bare number; \hyperref makes it a single link
	}
	
	\newcommand{\figsubref}[2]{%
		\hyperref[#2]{Figure~\ref*{#1}(\subref*{#2})}% one clickable "Figure <main>(<sub>)"
	}
	
	\newcommand{\lemref}[1]{%
		\refitem{Lemma}{#1}%
	}
	
	\newcommand{\thmref}[1]{%
		\refitem{Theorem}{#1}%
	}
	
	\newcommand{\remref}[1]{%
		\refitem{Remark}{#1}%
	}
	
	\makeatletter
	\renewenvironment{proof}[1][\proofname]{%    % Title: proof environment (redefinition)
		\par\pushQED{\qed}\normalfont%           % Push end-of-proof symbol; normal font
		\topsep6\p@\@plus6\p@\relax%             % Vertical spacing before proof
		\trivlist\item[\hskip\labelsep\bfseries#1\@addpunct{.}]% % Bold "Proof." label with punctuation
		\ignorespaces%                           % Ignore spaces after the label
	}{%
		\popQED\endtrivlist\@endpefalse%         % Pop QED symbol; end list; reset end-paragraph flag
	}
	\makeatother
	
	\long\def\d{\mathrm{d}}            % Upright differential 'd' for dx, dy, etc.
	\long\def\bigO{\mathcal{O}}        % Big-O notation
	\long\def\A{\mathcal{A}}           % Calligraphic A
	\long\def\B{\mathcal{B}}           % Calligraphic B
	\long\def\I{\mathcal{I}}           % Calligraphic I
	\long\def\F{\mathcal{F}}           % Calligraphic F
	\long\def\Hilb{\mathcal{H}}        % Calligraphic H (Hilbert space)
	\long\def\ii{\mathrm{i}}           % Upright imaginary unit
	\long\def\Real{\mathbb{R}}		   % Set of real numbers
	\long\def\W{\mathcal{W}}           % Calligraphic W 
	
	\long\def\ie{\textit{i.e.}}      % Latin abbreviation “that is”; italicized. (No auto-space—add a trailing space in text.)
	\long\def\eg{\textit{e.g.}}      % Latin abbreviation “for example”; italicized. (Add space after use.)
	\long\def\cf{\textit{cf.}}       % Latin “compare”; italicized. (Add space after use.)
	\titleformat{\section}{\large\bfseries}{\thesection}{1em}{}            % Title: \section format (bold, larger)
	\titleformat{\subsection}{\normalsize\bfseries}{\thesubsection}{1em}{} % Title: \subsection format (bold)
	
	\usepackage[%
				backend=biber,%	
				style=numeric-comp,   % Compressed numeric citations [1–3]
				safeinputenc=true     % Preserve TeX accent commands; avoid Unicode combining character issues with pdflatex
			   ]{biblatex}
	
	\DeclareUnicodeCharacter{0306}{\protect\u{}{}}
	\numberwithin{equation}{section} % Equations: (section.eqnum)
	\numberwithin{figure}{section}   % Figures: (section.fignum)
	\numberwithin{table}{section}    % Tables: (section.tabnum)
	
	\usepackage[most]{tcolorbox} % Load tcolorbox with common libraries (skins, breakable, theorems)
	\tcbset{%
		myabstract/.style={%
			colback=gray!5!white,    % Light background
			colframe=black!20,       % Frame color
			fonttitle=\bfseries,     % Bold title
			coltitle=black,          % Title text color
			boxrule=0.6pt,           % Border thickness
			arc=2mm,                 % Rounded corners
			enhanced,                % Enable advanced options
			before skip=10pt,        % Space before the box
			after skip=20pt,         % Space after the box
			breakable                % Allow page breaks if the abstract is long
		}%
	}
	\renewenvironment{abstract}%      % Title: abstract environment (redefinition)
	{\begin{tcolorbox}[myabstract,title=Abstract]}%  % Begin styled abstract box
		{\end{tcolorbox}}%                           % End styled abstract box
	
	\title{\papertitle}                 % For PDF metadata/bookmarks
	\author{\runauthors}                % For PDF metadata/bookmarks
	\date{}                             % Empty date suppresses date printing under the title
	
	\newcommand{\titleseparator}{\par\noindent{\color{black}\rule{\linewidth}{0.8pt}}\par}
	
	\usepackage{todonotes}
	
	\allowdisplaybreaks  % Allow multi-line displayed equations to break across pages
	
\begin{document}
		
		\sloppy  % Relax line-breaking rules globally (looser spacing, fewer overfull boxes)
		
		% ======================================================================
		%  CUSTOM TITLE PAGE
		%  Title: Title Page (manual)
		%  Scope: Title between rules; centered author blocks; abstract with \vfill
		%  Notes: All body text is black by default; links use 'linkColor'.
		% ======================================================================
		
		\begin{titlepage}
			%==============================================================
			% Title Page
			% - Fully centered, with flexible vertical spacing controlled by \vfill.
			% - Uses three \vfill "springs" to distribute whitespace between:
			%   (1) title block, (2) author blocks, (3) abstract/keywords block.
			%==============================================================
			
			\thispagestyle{empty} % Suppress headers/footers on the title page
			
			\begin{center}
				%----------------------------------------------------------
				% Vertical balancing (top spring)
				% - \vfill expands to occupy available space, helping center the content
				%   vertically on the page (as opposed to a fixed \vspace*).
				%----------------------------------------------------------
				\vfill
				
				%----------------------------------------------------------
				% Title block
				% - \titleseparator: your custom horizontal rule used above/below the title.
				% - \vspace{...}: fixed micro-spacing around the title for consistent rhythm.
				%----------------------------------------------------------
				\titleseparator
				\vspace{0.9em}
				{\LARGE\bfseries \papertitle\par}
				\vspace{0.7em}
				\titleseparator
				
				%----------------------------------------------------------
				% Vertical balancing (middle spring)
				% - Separates the title from the author section while keeping the
				%   overall layout vertically balanced.
				%----------------------------------------------------------
				\vfill
				
				%==========================================================
				% Author 1 block
				% - Name + ORCID: emphasized line (large + bold).
				% - Affiliation: structured line breaks for readable hierarchy.
				% - Email: monospaced + clickable mailto link.
				%==========================================================
				{\large\textbf{\authorRogava}\,\orcidlink{\orcidRogava}\par}
				\vspace{0.4em}
				{\small
					Faculty of Exact and Natural Sciences,\\
					Ivane Javakhishvili Tbilisi State University (TSU),\\
					Ilia Vekua Institute of Applied Mathematics (VIAM),\\
					11 University Street, 0186 Tbilisi, Georgia\par}
				\vspace{0.3em}
				{\small
					\href{mailto:jemal.rogava@tsu.ge}{\texttt{jemal.rogava@tsu.ge}}\par}
				
				%----------------------------------------------------------
				% Vertical balancing between authors (spring)
				% - \vfill gives adaptive separation: it grows/shrinks depending on
				%   the remaining page height, keeping blocks visually balanced.
				%----------------------------------------------------------
				\vfill
				
				%==========================================================
				% Author 2 block (multiple affiliations + multiple emails)
				% - Uses \\[0.4em] to insert a small visual break between institutions.
				% - Emails separated by \textbar for compact presentation.
				%==========================================================
				{\large\textbf{\authorVashakidze}\,\orcidlink{\orcidVashakidze}\par}
				\vspace{0.4em}
				{\small
					School of Science and Technology, The University of Georgia (UG),\\
					77 Merab Kostava Street, 0171 Tbilisi, Georgia\\[0.4em]
					Ilia Vekua Institute of Applied Mathematics (VIAM),\\
					Ivane Javakhishvili Tbilisi State University (TSU),\\
					11 University Street, 0186 Tbilisi, Georgia\par}
				\vspace{0.3em}
				{\small
					\href{mailto:z.vashakidze@ug.edu.ge}{\texttt{z.vashakidze@ug.edu.ge}} \textbar\
					\href{mailto:zurab.vashakidze@tsu.ge}{\texttt{zurab.vashakidze@tsu.ge}}\par}
				
				%----------------------------------------------------------
				% Vertical balancing (bottom spring)
				% - Pushes the abstract/keywords block toward the lower part of the page
				%   while still allowing elastic whitespace above it.
				%----------------------------------------------------------
				\vfill
				
				%----------------------------------------------------------
				% Abstract + Keywords block (width-constrained)
				% - minipage at 0.9\textwidth improves line length and readability.
				% - Inner \vfill separates abstract and keywords when extra space exists.
				%----------------------------------------------------------
				\begin{minipage}{0.9\textwidth}
					
					% Abstract environment (styled by your class/package)
					\begin{abstract}
						The solution of the Cauchy problem for homogeneous abstract hyperbolic equations, together with its derivative, admits a vector representation in terms of a unitary operator group associated with a rotation matrix. A rational approximation of this unitary group is constructed and shown to possess optimal fourth-order convergence. The order of convergence is determined in accordance with the smoothness scale. Based on this rational approximation, a two-layer semi-discrete scheme is constructed for the approximate solution of Cauchy problems for nonhomogeneous abstract hyperbolic equations in both the linear and semi-linear settings. The convergence properties of the scheme are examined in relation to the regularity of the solution.
					\end{abstract}
					
					\vfill
					
					% Keywords line: compact, journal-friendly
					\noindent\textbf{Keywords:} abstract hyperbolic equation; cosine operator function; semi-discrete scheme; second-order semi-linear evolution equation; sine operator function; unitary operator group.
					
					\vspace{6pt}
					
					% MSC2020 classification line: compact, journal-friendly formatting
					\noindent\textbf{MSC2020 Classification:} 35L71; 35L90; 47D03; 47D09; 47N40; 65J10.
				\end{minipage}
				
			\end{center}
		\end{titlepage}
		
		% ======================================================================
		%  METHOD: New Page
		%  Title: Hard Page Break
		%  Role: Start main content on a fresh page; stronger than \pagebreak.
		% ======================================================================
		\newpage
		
		\section{Introduction}
		
		It is well known that the solution of the Cauchy problem for abstract hyperbolic equations is given by the operator functions sine and cosine. This fact allows the unknown function and its derivative to be defined vectorially by an operator matrix dependent on the time variable, which forms a unitary operator group. We observe that this unitary group may be naturally interpreted as being associated with a rotation matrix.
		
		In the paper, a two-layer vector scheme is constructed using a high-order of accuracy rational approximation for the unitary operator group. Using this scheme, we can find both the value of the unknown function and its derivative at each time layer. The stability of the constructed scheme is investigated; in particular, it is proved that the norm of the operator matrix of transition does not exceed one, which ensures the stability of the considered scheme in any finite time interval. It is proven that the constructed rational approximation, which is in fact an operator analogue of scalar Padé approximation, gives the value of the unknown function and its derivative at any time step with fourth-order accuracy. Observe that Padé rational approximation is also employed in \cite{RogavaOperSemigroup2022} for semigroups of operators.
		
		It should be noted that the fourth-order convergence of the proposed scheme is optimal under the prescribed smoothness assumptions and cannot be improved by imposing additional regularity. Moreover, we establish the dependence of the convergence order on the smoothness of the solution, starting from the case in which the solution satisfies the minimal smoothness requirements.
		
		Cosine operator functions play an important role in representing the solution of the Cauchy problem for an abstract hyperbolic equation. The definition of the cosine operator function in a general Banach space is addressed in the work of Sova \cite{Sova1966}. The main result of this article is a characterization of the operators that can generate cosine operator functions. Hoppe \cite{Hoppe1982} considered the approximation of cosine operator functions in terms of their infinitesimal generators and discussed the consequences of these results for the approximate solution of the initial-value problem associated with an abstract homogeneous hyperbolic equation. The article by Goldstein \cite{Goldstein1974} also addresses the approximation of cosine operator functions. The author proved an analogue of the Trotter-Neveu-Kato semigroup approximation theorem for cosine functions.
		
		The existence, uniqueness, continuous dependence on the initial data, and regularity of solutions to the Cauchy problem for semilinear abstract hyperbolic equations in Banach spaces were investigated by Travis and Webb in \cite{TravisWebb1978}. It should be noted that, in the equation treated in that work, the nonlinear term is assumed to satisfy a Lipschitz condition with respect to both the unknown function and its derivative.
		
		We note that important contributions to the construction and analysis of numerical schemes for the approximate solution of Cauchy problems associated with hyperbolic equations were made by Baker \cite{Baker1976}, Baker and Bramble \cite{BakerBramble1979}, Baker, Dougalis, and Serbin \cite{BakerDougalisSerbin1980,BakerDougalisSerbin1979}, Bales \cite{Bales1993}, Ka\v{c}ur \cite{Kacur1984}, Pultar \cite{Pultar1984}, and Sobolevski\u{\i} and \v{C}ebotareva \cite{SobolevskiiChebotareva1977}, among others.
		
		In \cite{BakerDougalisSerbin1980} and \cite{BakerDougalisSerbin1979}, the authors proposed an approach that differs from the standard one. Starting from an analytic representation of the solution to a homogeneous abstract hyperbolic equation, they constructed an exact three-level semidiscrete scheme on a uniform partition of the time interval. The transition operator associated with this scheme is given by the corresponding cosine operator function. The recurrence relation thereby obtained yields semidiscrete schemes of arbitrary order of accuracy, whose transition operators are rational approximations of the cosine operator.
		
		We recall a standard approach to the treatment of second-order evolution problems. Introducing an auxiliary unknown, one may reformulate the problem as a first-order evolution equation governed by a matrix operator. Under suitable assumptions on the operator appearing in the original problem, the solution of the resulting first-order equation admits a representation in terms of an operator semigroup. The theory of operator semigroups is well developed and constitutes a fundamental tool in the analysis of evolution problems; see, for instance, Arendt \cite{Arendt2004}, Arendt, Vogt, and Voigt \cite{AVV2026}, and Hille and Phillips \cite{HP1974}. We further refer to the monograph of Zagrebnov, Neidhardt, and Ichinose \cite{ZNI2024}, where the theory of semigroup approximation is systematically developed in both Hilbert and Banach spaces. Finally, we note that \cite{GRT2002} and \cite{GRT2004} are devoted to the construction of high-order splitting schemes for evolution problems based on semigroup approximation.
		
		At the end of the introduction, we would like to note that the proposed approximation of the unitary operator group associated with the rotation matrix yields approximate values of the solution of the Cauchy problem for a homogeneous abstract hyperbolic equation, as well as of its derivative, at uniformly spaced nodes of the time interval. In the case of a nonhomogeneous linear equation, retaining the same approximation for the corresponding homogeneous problem and approximating the integral term by a quadrature formula chosen in accordance with the regularity of the solution leads to a combined scheme that approximates both the solution of the original problem and its derivative at the nodal points. The order of the error with respect to the time step is then determined by the regularity of the solution. The developed approach is extended to the general semilinear setting.
		
		\section{The Unitary Operator Group Associated with a Rotation Matrix}
		
		Let $\Hilb$ be a Hilbert space equipped with inner product $\left( \cdot,\cdot \right)$ and associated norm $\left\Vert \cdot \right\Vert$. We define the Cartesian product space $\Hilb \times \Hilb$ as the set of all ordered pairs with components in $\Hilb$. Each element of this space is therefore a vector of the form $\left\{ u,v \right\}$, where $u \in \Hilb$ and $v \in \Hilb$. The inner product and the corresponding norm in the Cartesian product space are defined as follows:
		\begin{equation*}
			\left( \left( \left\{ u_1,v_1 \right\},\left\{ u_2,v_2 \right\} \right) \right) = \left( u_1,u_2 \right) + \left( v_1,v_2 \right) \quad \text{and} \quad \left\Vert \left\{ u,v \right\} \right\Vert_{\Hilb \times \Hilb} = \left( {\left\Vert u \right\Vert}^2 + {\left\Vert v \right\Vert}^2 \right)^{\frac{1}{2}}\,.
		\end{equation*}
		
		We define the following operator matrix acting on the product space $\Hilb \times \Hilb$:
		\begin{equation*}
			U \left( t \right) =
			\left(
			\begin{array}{rr}
				\cos \left( t \A^{1 / 2} \right) & \sin \left( t \A^{1 / 2} \right) \\
				-\sin \left( t \A^{1 / 2} \right) & \cos \left( t \A^{1 / 2} \right)
			\end{array}
			\right)\,.
		\end{equation*}
		
		Here, $\A : D \left( \A \right) \subset \Hilb \to \Hilb$ is a densely defined, self-adjoint, positive definite operator (possibly unbounded). The operator-valued functions $\cos \left( t \A^{1 / 2} \right)$ and $\sin \left( t \A^{1 / 2} \right)$ are defined by means of the operator analogue of the classical Euler formulas. More precisely,
		\begin{equation}\label{eq:euler_formulas}
			\cos \left( t \A^{1 / 2} \right) = \frac{1}{2} \left( e^{\ii t \sqrt{\A}} + e^{-\ii t \sqrt{\A}} \right)\,, \quad \sin \left( t \A^{1 / 2} \right) = \frac{1}{2 \ii} \left( e^{\ii t \sqrt{\A}} - e^{-\ii t \sqrt{\A}} \right)\,,
		\end{equation}
		where $\left\{ e^{\pm \ii t \sqrt{\A}} \right\}_{t \in \Real}$ is the unitary group generated by the operator $\mp \ii \A^{1 / 2}$ (\cf{} Stone \cite{Stone1932}, {\bfseries Chapter IX} in Yosida \cite{Yosida1980}).
		
		It is readily verified that
		\begin{equation*}
			\left\Vert \left( \I \pm \lambda \ii \A^{1 / 2} \right)^{-1} \right\Vert \leq 1\,, \quad \text{for all} \quad \lambda \in \Real\,,
		\end{equation*}
		where $\I$ denotes the identity operator on the Hilbert space $\Hilb$.
		
		Consequently, for every $\varphi \in \Hilb$, the following limit exists:
		\begin{equation*}
			\lim_{n \to \infty} \left( \I \pm \frac{t}{n} \ii \A^{1 / 2} \right)^{-n} \varphi\,.
		\end{equation*}
		This limit is denoted by $e^{\mp \ii t \sqrt{\A}} \varphi$ and defines an operator family possessing the characteristic properties of the exponential function (see Kato \cite{Kato1980}, {\bfseries Chapter IX}).
		
		It should be observed that we consider the case when the argument of the operator functions sine and cosine contains the square root of a self-adjoint, positive definite operator, since the solution of the abstract hyperbolic equation is given by these operator functions. Our ultimate objective is to approximate the solution of the abstract hyperbolic equation by approximating the unitary operator group.
		
		We can easily show that the adjoint of the operator $U \left( t \right)$ has the following form:
		\begin{equation*}
			U \left( t \right)^{\ast} =
			\left(
			\begin{array}{rr}
				\cos \left( t \A^{1 / 2} \right) & -\sin \left( t \A^{1 / 2} \right) \\
				\sin \left( t \A^{1 / 2} \right) & \cos \left( t \A^{1 / 2} \right)
			\end{array}
			\right)\,,
		\end{equation*}
		or, equivalently, $U \left( t \right)^{\ast} = U \left( -t \right)$. Furthermore, it is evident that $U \left( t \right) U \left( t \right)^{\ast} = I$, where $I$ is the identity operator on $\Hilb \times \Hilb$,
		\begin{equation*}
			I =
			\left(
			\begin{array}{rr}
				\I & 0 \\
				0 & \I
			\end{array}
			\right)\,.
		\end{equation*}
		
		From the formulas given in \eqref{eq:euler_formulas}, it follows that
		\begin{equation*}
			U \left( t \right) U \left( s \right) = U \left( t + s \right) \quad \text{for all} \quad t,s \in \Real\,.
		\end{equation*}
		
		Using the formulas in \eqref{eq:euler_formulas}, one obtains
		\begin{equation*}
			U \left( t \right) \varphi \to U \left( t_0 \right) \varphi \quad \text{for every} \quad \varphi \in \Hilb \times \Hilb \quad \text{as} \quad t \to t_0\,.
		\end{equation*}
		
		Therefore, the family $\left\{ U \left( t \right) \right\}_{t \in \Real}$ of bounded linear operators on $\Hilb \times \Hilb$ forms a strongly continuous one-parameter unitary group. That is, the following conditions hold:
		\begin{enumerate}[label=(\roman*)]
			\item $U \left( t + s \right) = U \left( t \right) U \left( s \right)$, for all $t,s \in \Real$.
			\item $U \left( 0 \right) = I$ (the identity operator on $\Hilb \times \Hilb$).
			\item For every $t \in \Real$, the operator $U \left( t \right)$ is unitary; that is, $U \left( t \right)^{\ast} = U \left( t \right)^{-1}$.
			\item For every $\varphi \in \Hilb \times \Hilb$, the mapping $t \mapsto U \left( t \right) \varphi$ is continuous with respect to the strong operator topology.
		\end{enumerate}
		
		Observe that, upon replacing the argument $t \A^{1 / 2}$ in the matrix operator $U \left( t \right)$ by the scalar angle $\theta$, one obtains the standard rotation matrix in the plane. Consequently, it is natural to regard the corresponding unitary group $U \left( t \right)$ as being associated with the rotation matrix.
		
		\section{Approximation of a Unitary Operator Group Associated with a Rotation Matrix}
		\subsection{Rational Approximation of the Sine and Cosine Operator Functions}
		
		The main idea of this work is to approximate the operator functions $\cos \left( t \A^{1 / 2} \right)$ and $\sin \left( t \A^{1 / 2} \right)$ by rational operator functions $\W_1 \left( t \right)$ and $\W_2 \left( t \right)$, respectively, in such a way that the norm of the sum of their squares does not exceed one.
		
		Consider the following operator matrix acting on $\Hilb \times \Hilb$:
		\begin{equation*}
			V \left( t \right) =
			\left(
			\begin{array}{rr}
				\W_1 \left( t \right) & \W_2 \left( t \right) \\
				-\W_2 \left( t \right) & \W_1 \left( t \right)
			\end{array}
			\right)\,.
		\end{equation*}
		\begin{lemma}\label{prop:lemma1}
			If the condition $\left\Vert \W_1^2 \left( t \right) + \W_2^2 \left( t \right) \right\Vert \leq 1$ holds, then $\left\Vert V \left( t \right) \right\Vert_{\Hilb \times \Hilb} \leq 1$.
		\end{lemma}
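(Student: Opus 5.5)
The plan is to compute $\|V(t)\varphi\|^2$ directly for an arbitrary $\varphi=\{u,v\}\in\Hilb\times\Hilb$ and show that it equals $((\,V(t)^\ast V(t)\varphi,\varphi\,))$ with $V(t)^\ast V(t)=\operatorname{diag}(\W_1^2+\W_2^2,\ \W_1^2+\W_2^2)$. This requires implicit structural assumptions on $\W_1(t)$ and $\W_2(t)$. Since they are rational functions of the self-adjoint operator $\A$ with real coefficients, we take them to be bounded, self-adjoint and mutually commuting. We make this explicit at the start, as the lemma is meaningless without it, and it holds for the intended rational operator functions by the functional calculus.

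First, the adjoint of $V(t)$ is the matrix with entries $\W_1,-\W_2;\ \W_2,\W_1$, exactly as for $U(t)^\ast$ earlier. Multiplying gives
\begin{equation*}
V(t)^\ast V(t)=\left(\begin{array}{cc}\W_1^2+\W_2^2 & \W_1\W_2-\W_2\W_1\\ \W_2\W_1-\W_1\W_2 & \W_2^2+\W_1^2\end{array}\right)=\left(\begin{array}{cc}\W_1^2+\W_2^2 & 0\\ 0 & \W_1^2+\W_2^2\end{array}\right),
\end{equation*}
and the off-diagonal terms vanish by commutativity.

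Hence
\begin{equation*}
\|V(t)\{u,v\}\|^2_{\Hilb\times\Hilb}=\big((\W_1^2+\W_2^2)u,u\big)+\big((\W_1^2+\W_2^2)v,v\big).
\end{equation*}
Each term is bounded by $\|\W_1^2+\W_2^2\|\,\|u\|^2$ and $\|\W_1^2+\W_2^2\|\,\|v\|^2$ respectively, via Cauchy--Schwarz. The hypothesis then gives $\|V(t)\varphi\|^2\le\|u\|^2+\|v\|^2=\|\varphi\|^2$, and taking the supremum over $\varphi$ yields the claim. Alternatively, one can expand $\|\W_1u+\W_2v\|^2+\|-\W_2u+\W_1v\|^2$ directly: the cross terms $2\Re(\W_1u,\W_2v)$ and $-2\Re(\W_2u,\W_1v)$ cancel because $\W_2\W_1=\W_1\W_2$ and the operators are self-adjoint.

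The only real obstacle is justifying the self-adjointness and commutativity of $\W_1$ and $\W_2$, which is where the cancellation comes from. I would handle this by noting that both are bounded real rational functions of $\A$ whose denominators have the form $(\I+c\A)^{-1}$ with real $c>0$, so they belong to the commutative algebra generated by the resolvents of $\A$. If the paper intends general $\W_i$, I would state this assumption as part of the lemma's setting.
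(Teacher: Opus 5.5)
Your proposal is correct and matches the paper's proof essentially line by line. The paper also writes $\Vert V(t)\{u_1,u_2\}\Vert_{\Hilb\times\Hilb}^2 = ((V(t)^\ast V(t)\{u_1,u_2\},\{u_1,u_2\}))$, identifies $V(t)^\ast V(t)$ with the diagonal matrix having $\W_1^2(t)+\W_2^2(t)$ on the diagonal, and concludes by Cauchy--Schwarz and the hypothesis. It leaves the self-adjointness and commutativity of $\W_1,\W_2$ implicit, and you are right to state them explicitly.

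One correction to your side justification: for the concrete $\W_1,\W_2$ of Lemma~\ref{prop:lemma2}, the factors $(\I+\lambda t^2\A)^{-1}$ have complex $\lambda$, and $\W_2$ contains $t\A^{1/2}$. So the reason is not that every factor is a real resolvent. Rather, each $\W_i(t)$ is a bounded real-valued Borel function of $\A^{1/2}$, because the products of conjugate factors are real.
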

		\begin{proof}
			By straightforward algebraic manipulation, one obtains
			\begin{align*}
				\left\Vert V \left( t \right) \left\{ u_1,u_2 \right\} \right\Vert_{\Hilb \times \Hilb}^2 &= \left( \left(  V \left( t \right) \left\{ u_1,u_2 \right\}, V \left( t \right) \left\{ u_1,u_2 \right\} \right) \right) \\
				&= \left( \left(  V \left( t \right)^{\ast} V \left( t \right) \left\{ u_1,u_2 \right\},\left\{ u_1,u_2 \right\} \right) \right) \\
				&= \left( \left(  \left\{ \left[ \W_1^2 \left( t \right) + \W_2^2 \left( t \right) \right] u_1,\left[ \W_1^2 \left( t \right) + \W_2^2 \left( t \right) \right] u_2 \right\},\left\{ u_1,u_2 \right\} \right) \right) \\
				&= \left( \left[ \W_1^2 \left( t \right) + \W_2^2 \left( t \right) \right] u_1,u_1 \right) + \left( \left[ \W_1^2 \left( t \right) + \W_2^2 \left( t \right) \right] u_2,u_2 \right) \\
				&\leq \left\Vert \left[ \W_1^2 \left( t \right) + \W_2^2 \left( t \right) \right] u_1 \right\Vert \left\Vert u_1 \right\Vert + \left\Vert \left[ \W_1^2 \left( t \right) + \W_2^2 \left( t \right) \right] u_2 \right\Vert \left\Vert u_2 \right\Vert \\
				&\leq \left\Vert \W_1^2 \left( t \right) + \W_2^2 \left( t \right) \right\Vert \left( \left\Vert u_1 \right\Vert^2 + \left\Vert u_2 \right\Vert^2 \right) = \left\Vert \W_1^2 \left( t \right) + \W_2^2 \left( t \right) \right\Vert \left\Vert \left\{ u_1,u_2 \right\} \right\Vert_{\Hilb \times \Hilb}^2\,.
			\end{align*}
			Consequently,
			\begin{equation*}
				\left\Vert V \left( t \right) \left\{ u_1,u_2 \right\} \right\Vert_{\Hilb \times \Hilb} \leq \sqrt{\left\Vert \W_1^2 \left( t \right) + \W_2^2 \left( t \right) \right\Vert} \leq 1\,.
			\end{equation*}
		\end{proof}
		
		The next result concerns the approximation of the operator functions $\cos \left( t \A^{1 / 2} \right)$ and $\sin \left( t \A^{1 / 2} \right)$ by rational operator functions in a neighborhood of the point $t = 0$.
		
		\begin{lemma}\label{prop:lemma2}
			Let the rational operator functions $\W_1 \left( t \right)$ and $\W_2 \left( t \right)$ be defined by
			\begin{align*}
				\W_1 \left( t \right) &= \left( \I + \lambda t^2 \A \right)^{-1} \left( \I + \bar{\lambda} t^2 \A \right)^{-1}\,, \quad \lambda = \frac{1}{4} + \ii \frac{\sqrt{21}}{12}\,, \\
				\W_2 \left( t \right) &= t \A^{1 / 2} \left( \I + \frac{1}{6} t^2 \A \right)^{-1} \left( \I + \lambda_0 t^2 \A \right)^{-1} \left( \I + \bar{\lambda}_0 t^2 \A \right)^{-1}\,, \quad \lambda_0 = \ii \frac{\sqrt{15}}{12}\,.
			\end{align*}
			Then the following estimates hold:
			\begin{align}
				\left\Vert \left( \cos \left( t \A^{1 / 2} \right) - \W_1 \left( t \right) \right) u \right\Vert &\leq \frac{31}{120} \left\vert t \right\vert^5 \left\Vert \A^{5 / 2} u \right\Vert\,, \quad u \in D \left( \A^{5 / 2} \right)\,,\label{eq:cos_approx} \\
				\left\Vert \left( \sin \left( t \A^{1 / 2} \right) - \W_2 \left( t \right) \right) u \right\Vert &\leq \frac{101}{720} \left\vert t \right\vert^5 \left\Vert \A^{5 / 2} u \right\Vert\,, \quad u \in D \left( \A^{5 / 2} \right)\,.\label{eq:sin_approx}
			\end{align}
			Moreover, the operator functions $\W_1 \left( t \right)$ and $\W_2 \left( t \right)$ satisfy $\left\Vert \W_1^2 \left( t \right) + \W_2^2 \left( t \right) \right\Vert \leq 1$ for all $t \in \Real$.
		\end{lemma}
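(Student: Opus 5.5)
Since $\A$ is self-adjoint and positive definite, every operator in the statement is a Borel function of $t\A^{1/2}$. By the spectral theorem, each estimate therefore reduces to a scalar inequality on the spectrum, with $x = t\sqrt{a}$ for $a>0$ in $\sigma(\A)$. Because $\cos$, $\sin$ and the rational functions below are even or odd in $x$ in matching ways, it suffices to take $x\ge 0$. Concretely, if $f$ is a bounded Borel function with $|f(x)|\le C|x|^5$ on $\Real$, then $\|f(t\A^{1/2})u\| \le C\,|t|^5\|\A^{5/2}u\|$ for $u\in D(\A^{5/2})$, since $f(t\A^{1/2})u = g(t\A^{1/2})\,t^5\A^{5/2}u$ with $g(x)=f(x)/x^5$ and $\sup|g|\le C$.

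The scalar symbols are as follows. For $\W_1$, since $\lambda+\bar\lambda=\tfrac12$ and $|\lambda|^2=\tfrac1{16}+\tfrac{21}{144}=\tfrac{5}{24}$,
\begin{equation*}
w_1(x)=\frac{1}{D_1(x)}\,,\qquad D_1(x)=1+\tfrac12 x^2+\tfrac{5}{24}x^4\,.
\end{equation*}
For $\W_2$, since $|\lambda_0|^2=\tfrac{15}{144}=\tfrac{5}{48}$,
\begin{equation*}
w_2(x)=\frac{x}{D_2(x)}\,,\qquad D_2(x)=\left(1+\tfrac16 x^2\right)\left(1+\tfrac{5}{48}x^4\right)\,.
\end{equation*}
Both are real, and $D_1,D_2\ge 1$.

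For \eqref{eq:cos_approx} I write
\begin{equation*}
\cos x-w_1(x)=\frac{D_1(x)\cos x-1}{D_1(x)}\,.
\end{equation*}
Note that $D_1$ is the degree-four Taylor polynomial of $\sec x$, so $D_1\cos x-1=O(x^6)$ near $0$.

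On a bounded interval $[0,x_0]$ I expand $\cos x$ with a Taylor remainder and multiply by $D_1$. All terms of order below $6$ cancel, so the numerator is bounded by an explicit multiple of $x^6$. Dividing by $D_1\ge 1$ and using $x^6\le x_0 x^5$ gives a bound $c\,x^5$.

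For $x\ge x_0$ I use the crude bound $|\cos x-w_1|\le 2$, or the sharper $|D_1\cos x-1|/D_1\le 1+1/D_1$, together with $2\le \tfrac{31}{120}x^5$ once $x$ is large. The threshold $x_0$ is then chosen so that the two regimes match the constant $\tfrac{31}{120}$.

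The sine estimate \eqref{eq:sin_approx} is handled identically, starting from
\begin{equation*}
\sin x-w_2(x)=\frac{D_2(x)\sin x-x}{D_2(x)}\,.
\end{equation*}
Here the cancellation is through order $x^3$: the coefficient of $x^5$ in $D_2\sin x - x$ is $\tfrac{5}{48}-\tfrac1{36}+\tfrac1{120}$, which is nonzero, so only an $x^5$ bound holds near $0$. That is consistent with the stated exponent.

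For the stability claim, $\W_1^2+\W_2^2=h(t\A^{1/2})$ with $h=w_1^2+w_2^2\ge 0$ real, so $\|\W_1^2+\W_2^2\|=\sup h$, and it suffices to show $h\le 1$. Clearing denominators, this is the polynomial inequality
\begin{equation*}
x^2 D_1(x)^2\le D_1(x)^2D_2(x)^2-D_2(x)^2\,.
\end{equation*}
I would expand both sides in powers of $y=x^2$ and verify that the difference has nonnegative coefficients, or at least is nonnegative on $y\ge 0$. The low-order check is encouraging: $D_1^2-1 = y+\dots$ multiplied by $D_2^2$ matches the $y$ term on the left exactly, so the difference starts at order $y^2$ and its coefficients must be checked.

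I expect the main obstacle to be the constants: both pinning down $\tfrac{31}{120}$ and $\tfrac{101}{720}$ through the choice of $x_0$ and careful remainder estimates, and confirming positivity of the degree-ten polynomial in $y$. For the latter I would first try a direct coefficient expansion. If some coefficient turns out negative, I would absorb it into neighbouring positive terms using AM--GM.
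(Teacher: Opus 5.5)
Your proposal is correct. For $\Vert \W_1^2(t)+\W_2^2(t)\Vert\le 1$ it matches the paper (scalar inequality plus functional calculus). For the two error bounds you take a different route.

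The paper never compares $\cos x$ with $w_1(x)$ directly. It splits $\cos(t\A^{1/2})-\W_1(t)$ into two pieces:
\begin{itemize}
\item the operator Taylor remainder of the cosine, bounded by $\tfrac{1}{120}\vert t\vert^5\Vert \A^{5/2}u\Vert$ using only the unitary group and iterated integrals, as in \eqref{eq:cos_oper_expans}--\eqref{eq:sin_oper_expans};
\item a remainder $\widetilde R_1(t)$ produced by resolvent algebra and bounded crudely by $\tfrac14$ through the calculus of rational functions (and analogously $\widetilde R_2$ by $\tfrac{19}{144}$).
\end{itemize}
This is where $\tfrac{31}{120}=\tfrac{1}{120}+\tfrac14$ and $\tfrac{101}{720}=\tfrac{1}{120}+\tfrac{19}{144}$ come from.

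In scalar terms $\widetilde R_1$ is exactly $(\tfrac{1}{12}y^3-\tfrac{5}{576}y^4)/D_1$, so your step of multiplying the Taylor expansion by $D_1$ is the same algebra. The real difference is where you pass to scalars. You do it at the outset via the Borel calculus, which needs the standard identification (via Stone's theorem) of the Euler-formula $\cos(t\A^{1/2})$ with the spectral one. The paper keeps the Taylor part at operator level, where it is reused for the Simpson remainder and for \lemref{prop:lemma3}.

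Your version collapses everything to one scalar supremum and gives sharper constants. The case split is not even needed, since Lagrange's remainder gives $\vert R\vert\le\vert x\vert^5/120$ for all $x$. For example, $(x-\tfrac{x^3}{6})D_2-x=xy^2(\tfrac{11}{144}-\tfrac{5}{1728}y^2)$ together with $D_2\ge\max(1,\tfrac{5}{48}y^2)$ yields $\vert\sin x-w_2(x)\vert\le\tfrac{61}{720}\vert x\vert^5$ for all real $x$.

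Your fallback for the polynomial inequality is also unnecessary. Write $D_1^2=1+yP$ with $P=1+\tfrac{2}{3}y+\tfrac{5}{24}y^2+\tfrac{25}{576}y^3$. Then the difference equals $y\,[P(D_2^2-y)-1]$. Its coefficients vanish through $y^3$ and are positive from $y^4$ (coefficient $\tfrac{227}{1728}$) up to $y^{10}$, so a direct coefficient check suffices.

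Finally, your symbol $D_2=(1+\tfrac{1}{6}x^2)(1+\tfrac{5}{48}x^4)$ is the right one. The paper's displayed $r_2$ has $\tfrac{5}{48}x^2$, apparently a typo.
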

		\begin{proof}
			Using the expansion
			\begin{equation*}
				\left( \I + \tau \A \right)^{-1} = \I + \sum_{i = 1}^{k - 1} \left( -1 \right)^i \tau^i \A^i + \left( -1 \right)^k \tau^k \A^k \left( \I + \tau \A \right)^{-1}\,, \quad \tau > 0\,,
			\end{equation*}
			we find that
			\begin{align}\label{eq:W1_expans_undef_coeff}
				\W_1 \left( t \right) &= \left( \I + \bar{\lambda} t^2 \A \right)^{-1} \left( \I + \lambda t^2 \A \right)^{-1}\nonumber \\
				&= \left( \I - \bar{\lambda} t^2 \A + \bar{\lambda}^2 t^4 \A^2 - \bar{\lambda}^3 t^6 \A^3 \left( \I + \bar{\lambda} t^2 \A \right)^{-1}  \right) \left( \I + \lambda t^2 \A \right)^{-1}\nonumber \\
				&= \left( \I + \lambda t^2 \A \right)^{-1} - \bar{\lambda} t^2 \A \left( \I + \lambda t^2 \A \right)^{-1} + \bar{\lambda}^2 t^4 \A^2 \left( \I + \lambda t^2 \A \right)^{-1}\nonumber \\
				&- \bar{\lambda}^3 t^6 \A^3 \left( \I + \bar{\lambda} t^2 \A \right)^{-1} \left( \I + \lambda t^2 \A \right)^{-1}\nonumber \\
				&= \I - \lambda t^2 \A + \lambda^2 t^4 \A^2 - \lambda^3 t^6 \A^3 \left( \I + \lambda t^2 \A \right)^{-1}\nonumber \\
				&- \bar{\lambda} t^2 \A \left( \I - \lambda t^2 \A + \lambda^2 t^4 \A^2 \left( \I + \lambda t^2 \A \right)^{-1} \right)\nonumber \\
				&+ \bar{\lambda}^2 t^4 \A^2 \left( \I - \lambda t^2 \A \left( \I + \lambda t^2 \A \right)^{-1} \right)\nonumber \\
				&- \bar{\lambda}^3 t^6 \A^3 \left( \I + \bar{\lambda} t^2 \A \right)^{-1} \left( \I + \lambda t^2 \A \right)^{-1}\nonumber \\
				&= \I - \left( \lambda + \bar{\lambda} \right) t^2 \A + \left( \lambda^2 + \lambda \bar{\lambda} + \bar{\lambda}^2 \right) t^4 \A^2 + \widetilde{R}_1 \left( t \right)\,,
			\end{align}
			where
			\begin{equation*}
				\widetilde{R}_1 \left( t \right) = - \left( \lambda \left( \lambda^2 + \lambda \bar{\lambda} + \bar{\lambda}^2 \right) \I + \bar{\lambda}^3 \left( \I + \bar{\lambda} t^2 \A \right)^{-1} \right) \left( \I + \lambda t^2 \A \right)^{-1} t^6 \A^3\,.
			\end{equation*}
			
			Applying the same transformation to $\W_2 \left( t \right)$, one obtains
			\begin{equation}\label{eq:W2_expans_undef_coeff}
				\W_2 \left( t \right) = t \A^{1 / 2} - \frac{1}{6} t^3 \A^{3 / 2} - \left( \lambda_0 + \bar{\lambda}_0 \right) t^3 \A^{3 / 2} + \widetilde{R}_2 \left( t \right)\,,
			\end{equation}
			for which
			\begin{equation*}
				\widetilde{R}_2 \left( t \right) = \left( \frac{1}{36} \I + \left( \left( \lambda_0^2 + \lambda_0 \bar{\lambda}_0 \right) \I + \bar{\lambda}_0^2 \left( \I + \bar{\lambda}_0 t^2 \A \right)^{-1} \right) \left( \I + \lambda_0 t^2 \A \right)^{-1} \right) \left( \I + \frac{1}{6} t^2 \A \right)^{-1} t^5 \A^{5 / 2}\,.
			\end{equation*}
			
			Substituting the values of $\lambda$ and $\lambda_0$, we arrive at
			\begin{align}
				\W_1 \left( t \right) &= \I - \frac{1}{2} t^2 \A + \frac{1}{24} t^4 \A^2 + \widetilde{R}_1 \left( t \right)\,,\label{eq:W1_expans} \\
				\W_2 \left( t \right) &= t \A^{1 / 2} \left( \I - \frac{1}{6} t^2 \A \right) + \widetilde{R}_2 \left( t \right)\,.\label{eq:W2_expans}
			\end{align}
			
			For the remainder terms in \eqref{eq:W1_expans} and \eqref{eq:W2_expans}, the following bounds are established:
			\begin{align}
				\left\Vert \widetilde{R}_1 \left( t \right) u \right\Vert &\leq \frac{1}{4} \left\vert t \right\vert^5 \left\Vert \A^{5 / 2} u \right\Vert\,, \quad u \in D \left( \A^{5 / 2} \right)\,,\label{eq:est_tilde_r1} \\
				\left\Vert \widetilde{R}_2 \left( t \right) u \right\Vert &\leq \frac{19}{144} \left\vert t \right\vert^5 \left\Vert \A^{5 / 2} u \right\Vert\,, \quad u \in D \left( \A^{5 / 2} \right)\,.\label{eq:est_tilde_r2}
			\end{align}
			
			Invoking the formula (see Kato \cite{Kato1980}, {\bfseries Chapter IX})
			\begin{equation}\label{eq:kato_integr_form}
				\sqrt{\A} \int_{r}^{t} e^{\pm \ii s \sqrt{\A}} \d s = \mp \ii \left( e^{\pm \ii t \sqrt{\A}} - e^{\pm \ii r \sqrt{\A}} \right) \quad \text{for all} \quad r,t \in \Real\,,
			\end{equation}
			we obtain the expansion
			\begin{equation}\label{eq:exp_expansion}
				e^{\pm \ii t \sqrt{\A}} u = \sum_{j = 0}^{k} \left( \pm \ii \right)^j \frac{t^j}{j!} \left( \sqrt{\A} \right)^j u + \left( \pm \ii \right)^{k + 1} R_k \left( t \right) \left( \sqrt{\A} \right)^{k + 1} u\,, \quad u \in D \left( \A^{\left( k + 1 \right) / 2} \right)\,,
			\end{equation}
			where
			\begin{equation*}
				R_k \left( t \right) = \int_{0}^{t} \int_{0}^{s_1} \cdots \int_{0}^{s_k} e^{\pm \ii s \sqrt{\A}} \,\d s \d s_k \ldots \d s_1\,.
			\end{equation*}
			
			If we substitute representation \eqref{eq:exp_expansion} into the formulas given in \eqref{eq:euler_formulas}, then we derive the following expansions for the operator functions $\cos \left( t \sqrt{\A} \right)$ and $\sin \left( t \sqrt{\A} \right)$, respectively:
			\begin{align}
				\cos \left( t \sqrt{\A} \right) u &= \sum_{k = 0}^{n} \left( -1 \right)^k \frac{t^{2k}}{\left( 2k \right)!} \A^k u + \left( -1 \right)^{n + 1} \hat{R}_{2n} \left( t \right) \A^{n + 1/2} u\,, \quad u \in D \left( \A^{n + 1/2} \right)\,,\label{eq:cos_oper_expans} \\
				\sin \left( t \sqrt{\A} \right) u &= \sum_{k = 0}^{n} \left( -1 \right)^k \frac{t^{2k + 1}}{\left( 2k + 1 \right)!} \A^{k + 1/2} u + \left( -1 \right)^{n + 1} \hat{R}_{2n + 1} \left( t \right) \A^{n + 1} u\,, \quad u \in D \left( \A^{n + 1} \right)\,,\label{eq:sin_oper_expans}
			\end{align}
			in which
			\begin{equation*}
				\hat{R}_n \left( t \right) = \int_{0}^{t} \int_{0}^{s_1} \cdots \int_{0}^{s_n} \sin \left( s \sqrt{\A} \right) \,\d s \d s_n \ldots \d s_1\,.
			\end{equation*}
			
			The expansions obtained above represent the operator analogues of the power series expansions of the corresponding scalar functions.
			
			Since $e^{\pm \ii t \sqrt{\A}}$ are unitary operators, we have $\left\Vert e^{\pm \ii t \sqrt{\A}} \right\Vert = 1$. From this, it follows that $\left\Vert 	\sin \left( t \sqrt{\A} \right)  \right\Vert \leq 1$. Taking this fact into account, we arrive at the following result:
			\begin{equation*}
				\left\Vert \hat{R}_n \left( t \right) \right\Vert \leq \frac{{\left\vert t \right\vert}^{n + 1}}{\left( n + 1 \right)!}\,, \quad \text{for all} \quad t \in \Real\,.
			\end{equation*}
			Using this estimate and representation \eqref{eq:cos_oper_expans} with $n = 2$, we obtain the inequality
			\begin{equation}\label{eq:estim_cos}
				\left\Vert \left( \cos \left( t \A^{1 / 2} \right) - \left( \I - \frac{1}{2} t^2 \A + \frac{1}{24} t^4 \A^2 \right) \right) u \right\Vert \leq \frac{1}{120} \left\vert t \right\vert^5 \left\Vert \A^{5 / 2} u \right\Vert\,, \quad u \in D \left( \A^{5 / 2} \right)\,.
			\end{equation}
			
			From expansion \eqref{eq:sin_oper_expans}, if we set $n = 1$, the following inequality is obtained:
			\begin{equation}\label{eq:estim_sin}
				\left\Vert \left( \sin \left( t \A^{1 / 2} \right) - t \A^{1 / 2} \left( \I - \frac{1}{6} t^2 \A \right) \right) u \right\Vert \leq \left\Vert \hat{R}_3 \left( t \right) \A^{-1/2} \right\Vert \left\Vert \A^{5 / 2} u \right\Vert\,, \quad u \in D \left( \A^{5 / 2} \right)\,.
			\end{equation}
			
			According to \eqref{eq:kato_integr_form}, the following formula holds:
			\begin{equation*}
				\sin \left( t \A^{1 / 2} \right) \A^{-1 / 2} = \int_{0}^{t} \cos \left( s \A^{1 / 2} \right) \d s\,.
			\end{equation*}
			From this, we have
			\begin{equation*}
				\left\Vert \sin \left( t \A^{1 / 2} \right) \A^{-1 / 2} \right\Vert \leq \left\vert t \right\vert\,.
			\end{equation*}
			Substituting this estimate into \eqref{eq:estim_sin}, we conclude that
			\begin{equation}\label{eq:estim_sin2}
				\left\Vert \left( \sin \left( t \A^{1 / 2} \right) - t \A^{1 / 2} \left( \I - \frac{1}{6} t^2 \A \right) \right) u \right\Vert \leq \frac{1}{120} \left\vert t \right\vert^5 \left\Vert \A^{5 / 2} u \right\Vert\,, \quad u \in D \left( \A^{5 / 2} \right)\,.
			\end{equation}
			
			From the relations
			\begin{align*}
				\cos \left( t \A^{1 / 2} \right) - \W_1 \left( t \right) &= \left[ \cos \left( t \A^{1 / 2} \right) - \left( \I - \frac{1}{2} t^2 \A + \frac{1}{24} t^4 \A^2 \right) \right] - \widetilde{R}_1 \left( t \right)\,, \\
				\sin \left( t \sqrt{\A} \right) - \W_2 \left( t \right) &= \left[ \sin \left( t \A^{1 / 2} \right) - t \A^{1 / 2} \left( \I - \frac{1}{6} t^2 \A \right) \right] - \widetilde{R}_2 \left( t \right)\,,
			\end{align*}
			together with estimates \eqref{eq:est_tilde_r1} and \eqref{eq:estim_cos}, and \eqref{eq:est_tilde_r2} and \eqref{eq:estim_sin2}, it follows that the bounds \eqref{eq:cos_approx} and \eqref{eq:sin_approx} hold, respectively.
			
			We now prove that the norm of the sum of the squares of the rational operator functions $\W_1 \left( t \right)$ and $\W_2 \left( t \right)$ does not exceed one. For this purpose, we introduce the rational functions corresponding to these operators:
			\begin{equation*}
				r_1 \left( x \right) = \frac{1}{1 + \frac{1}{2} x^2 + \frac{5}{24} x^4}\,, \quad r_2 \left( x \right) = \frac{x}{\left( 1 + \frac{1}{6} x^2 \right) \left( 1 + \frac{5}{48} x^2 \right)}\,.
			\end{equation*}
			These functions are obtained from the representations of $\W_1 \left( t \right)$ and $\W_2 \left( t \right)$ by replacing $t \A^{1 / 2}$ with the scalar variable $x$.
			
			One readily verifies that
			\begin{equation}\label{eq:sum_sq_scalar_rat_func}
				r_1^2 \left( x \right) + r_2^2 \left( x \right) \leq 1\,, \quad x \in \left[ 0,+\infty \right)\,.
			\end{equation}
			
			Since the norm of a rational operator function evaluated at a self-adjoint operator does not exceed the uniform norm of the corresponding scalar function (see, \eg{}, {\bfseries Chapter VII} in Reed and Simon \cite{ReedSimon1980}), it follows from \eqref{eq:sum_sq_scalar_rat_func} that
			\begin{equation*}
				\left\Vert \W_1^2 \left( t \right) + \W_2^2 \left( t \right) \right\Vert \leq 1\,, \quad \text{for all} \quad t \in \Real\,.
			\end{equation*}
			This completes the proof.
		\end{proof}
		
		\begin{remark}\label{prop:remark_norm_v}
			From \lemref{prop:lemma1} and \lemref{prop:lemma2}, the following result follows
			\begin{equation*}
				\left\Vert V \left( t \right) \right\Vert_{\Hilb \times \Hilb} \leq 1\,.
			\end{equation*}
		\end{remark}
		
		\subsection{Approximation of a Unitary Operator Group with a Framework of Rational Matrix Operator Function}
		
		We have already prepared the ground for approximating the unitary operator group $U \left( t \right)$ through the operators $V \left( t \right)$. We state the following theorem.
		\begin{theorem}\label{prop:thrm_unitary_aprox}
			The sequence of operators $\left\{ \left( V \left( \frac{t}{n} \right) \right)^n \right\}_{n = 1}^{\infty}$ approximates the unitary operator group $U \left( t \right)$ in the strong operator topology. That is,
			\begin{equation}\label{eq:converg_over_Hilb}
				\lim_{n \to \infty} \left( V \left( \frac{t}{n} \right) \right)^n \boldsymbol{u} = U \left( t \right) \boldsymbol{u} \quad \text{for all} \quad \boldsymbol{u} = \left\{ u_1,u_2 \right\} \in \Hilb \times \Hilb\,.
			\end{equation}
			Moreover, the following estimate holds
			\begin{equation}\label{eq:diff_U_V_global}
				\left\Vert \left[ \left( V \left( \frac{t}{n} \right) \right)^n - U \left( t \right) \right] \boldsymbol{u} \right\Vert_{\Hilb \times \Hilb} \leq \left( \frac{t}{n} \right)^4 \left( \frac{5}{12} \left\vert t \right\vert \right) \left\Vert \left\{ \A^{5 / 2} u_1,\A^{5 / 2} u_2 \right\} \right\Vert_{\Hilb \times \Hilb}\,,
			\end{equation}
			where $u_1$ and $u_2$ belong to $D \left( \A^{5 / 2} \right)$.
		\end{theorem}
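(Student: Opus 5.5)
The plan is the standard Lax-type argument: local consistency from \lemref{prop:lemma2}, stability from \remref{prop:remark_norm_v}, and a telescoping identity. Set $\tau = t/n$. Since $U$ is a group, $U(t) = U(\tau)^n$. I will use the identity
\begin{equation*}
	V(\tau)^n - U(\tau)^n = \sum_{k=0}^{n-1} V(\tau)^{n-1-k} \left[ V(\tau) - U(\tau) \right] U(\tau)^{k}\,,
\end{equation*}
which holds on all of $\Hilb \times \Hilb$.

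The key observation is that all operators here are functions of the single self-adjoint operator $\A$, so they commute with $\A^{5/2}$ on $D(\A^{5/2})$. In particular $U(\tau)^k$ maps $\{u_1,u_2\}$ with $u_i \in D(\A^{5/2})$ to a pair of the same kind. Moreover, because $U(\tau)^k$ is unitary,
\begin{equation*}
	\left\Vert \{\A^{5/2} w_1, \A^{5/2} w_2\} \right\Vert_{\Hilb\times\Hilb} = \left\Vert \{\A^{5/2}u_1,\A^{5/2}u_2\} \right\Vert_{\Hilb\times\Hilb}\,, \quad \{w_1,w_2\} = U(\tau)^k\{u_1,u_2\}\,.
\end{equation*}
By \remref{prop:remark_norm_v}, $\Vert V(\tau)^{n-1-k}\Vert \le 1$. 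The global error is therefore bounded by $n$ times the local error of $V(\tau)-U(\tau)$ applied to such a vector.

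For the local estimate, write $a = \cos(\tau\A^{1/2})-\W_1(\tau)$ and $b = \sin(\tau\A^{1/2})-\W_2(\tau)$. Then
\begin{equation*}
	\left[ U(\tau)-V(\tau) \right]\{w_1,w_2\} = \{a w_1 + b w_2,\; -b w_1 + a w_2\}\,.
\end{equation*}
A direct estimate with the triangle inequality would give the constant $\sqrt{2}\,(31/120+101/720)$, which is not obviously $5/12$. I will instead use the structure of the matrix: $a$ and $b$ commute and are normal, so
\begin{equation*}
	\Vert (U-V)\boldsymbol{w}\Vert^2 = \left( (a^*a + b^*b)w_1,w_1 \right) + \left( (a^*a+b^*b)w_2,w_2 \right)\,,
\end{equation*}
because the cross terms cancel, exactly as in the proof of \lemref{prop:lemma1}. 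Via the functional calculus this reduces to the scalar bound $|\cos x - r_1(x)|^2 + |\sin x - r_2(x)|^2 \le c^2 x^{10}$. Combining the two bounds of \lemref{prop:lemma2} gives $c^2 \le (31/120)^2 + (101/720)^2$, and then $c \le 31/120 + 101/720 = 287/720 < 5/12$.

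So the local error is bounded by $\frac{5}{12}|\tau|^5 \Vert\{\A^{5/2}w_1,\A^{5/2}w_2\}\Vert$. Summing the $n$ terms gives $n\cdot\frac{5}{12}(|t|/n)^5$, which is exactly \eqref{eq:diff_U_V_global}.

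The convergence statement \eqref{eq:converg_over_Hilb} then follows by density. The estimate gives convergence on $D(\A^{5/2})\times D(\A^{5/2})$, which is dense since $\A$ is self-adjoint. For the extension I will use the uniform bound $\Vert V(\tau)^n - U(t)\Vert \le 2$, which follows from \remref{prop:remark_norm_v} and unitarity, together with a standard $\varepsilon/3$ argument.

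The main obstacle is the constant. The componentwise bounds in \lemref{prop:lemma2} are operator-wise, so applying them to the combinations $a w_1 + b w_2$ requires a pointwise-in-spectrum argument. I will do the spectral computation: both bounds in \lemref{prop:lemma2} actually come from scalar inequalities $|\cos x - r_1(x)| \le \frac{31}{120}|x|^5$ and $|\sin x - r_2(x)| \le \frac{101}{720}|x|^5$. With these, the spectral-theorem argument, or equivalently multiplication operator form, gives the local constant $\sqrt{(31/120)^2+(101/720)^2} \le 5/12$, as required.
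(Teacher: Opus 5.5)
Your proof is correct and has the same structure as the paper's. Both use the telescoping identity, the bound $\Vert V(\tau)\Vert_{\Hilb\times\Hilb}\le 1$ from \remref{prop:remark_norm_v}, the local bounds of \lemref{prop:lemma2}, and an $\varepsilon/3$ density argument. The paper moves $U(\tau)^{i-1}$ past $U(\tau)-V(\tau)$ by commutativity, while you use that $U$ preserves the $\A^{5/2}$-norm; these are equivalent.

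The one place you diverge is the local constant, and the obstacle you describe there does not exist. The paper does not use the triangle inequality. It uses $\Vert x+y\Vert^2\le 2\Vert x\Vert^2+2\Vert y\Vert^2$ in each component, which together with the operator bounds of \lemref{prop:lemma2} gives
\[
\left\Vert \left( U(\tau)-V(\tau) \right)\boldsymbol{u} \right\Vert_{\Hilb\times\Hilb}^2 \le 2\left[ \left(\tfrac{31}{120}\right)^2 + \left(\tfrac{101}{720}\right)^2 \right] \tau^{10} \left\Vert \left\{ \A^{5/2}u_1,\A^{5/2}u_2 \right\} \right\Vert_{\Hilb\times\Hilb}^2\,.
\]
Since $2[(31/120)^2+(101/720)^2]=89594/518400<90000/518400=(5/12)^2$, this is exactly where $5/12$ comes from.

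Your cancellation of the cross terms is a valid and sharper alternative, giving the constant $\sqrt{(31/120)^2+(101/720)^2}\approx 0.294$. It needs two corrections.

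First, the identity $a^*b-b^*a=0$ requires your $a$ and $b$ to be commuting self-adjoint operators. They are, since all four operators involved are real-valued functions of $\A$. Commuting normal operators do not suffice: take $a=\I$, $b=\ii\I$.

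Second, after the cancellation you have
\[
\Vert (U(\tau)-V(\tau))\boldsymbol{w}\Vert^2_{\Hilb\times\Hilb}=\Vert aw_1\Vert^2+\Vert bw_1\Vert^2+\Vert aw_2\Vert^2+\Vert bw_2\Vert^2 .
\]
\lemref{prop:lemma2} applies to this verbatim, so no pointwise-in-spectrum argument or scalar form of the lemma is needed. Note also that the paper proves that lemma by operator expansions, not via scalar inequalities.

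Your density justification through the self-adjointness of $\A$ is more direct than the paper's argument via the ranges of $\A^{-1}$ and $\A^{-1/2}$.
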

		\begin{proof}
			Let us introduce the vectors $\left( U \left( \tau \right) \right)^k \boldsymbol{u}$ and $\left( V \left( \tau \right) \right)^k \boldsymbol{u}$ with $k = 1,2,\ldots n$ and $\tau = t / n$. Since the family the operators $\left\{ U \left( t \right) \right\}_{t \in \Real}$ forms a group, then we have $\left( U \left( \tau \right) \right)^k = U \left( t_k \right)$ where $t_k = k \tau$. By taking into account this fact, we readily obtain the following representation:
			\begin{align}\label{eq:diff_Uk_Vk}
				\left[ U \left( t_k \right) - \left( V \left( \tau \right) \right)^k \right] \boldsymbol{u} &= \left[ \left( U \left( \tau \right) \right)^k - \left( V \left( \tau \right) \right)^k \right] \boldsymbol{u}\nonumber \\
				&= \sum_{i = 1}^{k} \left( V \left( \tau \right) \right)^{k - i} \left(  U \left( \tau \right) -  V \left( \tau \right) \right) \left( U \left( \tau \right) \right)^{i - 1} \boldsymbol{u}\,.
			\end{align}
			
			If we take into account that $\left\Vert V \left( \tau \right) \right\Vert_{\Hilb \times \Hilb} \leq 1$ (see \remref{prop:remark_norm_v}), $\left\Vert U \left( \tau \right) \right\Vert_{\Hilb \times \Hilb} = 1$, and the operators $U \left( \tau \right)$ and $V \left( \tau \right)$ are commutative, then from \eqref{eq:diff_Uk_Vk} we obtain the following inequality:
			\begin{equation}\label{eq:diff_U_V}
				\left\Vert \left[ U \left( t_k \right) - \left( V \left( \tau \right) \right)^k \right] \boldsymbol{u} \right\Vert_{\Hilb \times \Hilb} \leq k \left\Vert \left(  U \left( \tau \right) -  V \left( \tau \right) \right) \boldsymbol{u} \right\Vert_{\Hilb \times \Hilb}\,.
			\end{equation}
			
			Estimate the right-hand side of inequality \eqref{eq:diff_U_V}. It is evident that we have:
			\begin{align*}
				\left(  U \left( \tau \right) -  V \left( \tau \right) \right) &\boldsymbol{u} \\
				&=
				\left(
				\begin{array}{c}
					\left( \cos \left( \tau \A^{1 / 2} \right) - \W_1 \left( \tau \right) \right) u_1 + \left( \sin \left( \tau \A^{1 / 2} \right) - \W_2 \left( \tau \right) \right) u_2 \\
					\left( \W_2 \left( \tau \right) - \sin \left( \tau \A^{1 / 2} \right) \right) u_1 + \left( \cos \left( \tau \A^{1 / 2} \right) - \W_1 \left( \tau \right) \right) u_2
				\end{array}
				\right)\,.
			\end{align*}
			From here, we have:
			\begin{align*}
				\left\Vert \left(  U \left( \tau \right) -  V \left( \tau \right) \right) u \right\Vert_{\Hilb \times \Hilb}^2 \leq &2 \left( \left\Vert \left( \cos \left( \tau \A^{1 / 2} \right) - \W_1 \left( \tau \right) \right) u_1 \right\Vert^2 + \left\Vert \left( \sin \left( \tau \A^{1 / 2} \right) - \W_2 \left( \tau \right) \right) u_2 \right\Vert^2 \right) \\
				+ &2 \left( \left\Vert \left( \W_2 \left( \tau \right) - \sin \left( \tau \A^{1 / 2} \right) \right) u_1 \right\Vert^2 + \left\Vert \left( \cos \left( \tau \A^{1 / 2} \right) - \W_1 \left( \tau \right) \right) u_2 \right\Vert^2 \right)\,.
			\end{align*}
			By substituting estimates \eqref{eq:cos_approx} and \eqref{eq:sin_approx} in the preceding inequality, we establish:
			\begin{equation}\label{eq:diff_U_V_local}
				\left\Vert \left(  U \left( \tau \right) -  V \left( \tau \right) \right) \boldsymbol{u} \right\Vert_{\Hilb \times \Hilb} \leq \frac{5}{12} \left\vert \tau \right\vert^5 \left\Vert \left\{ \A^{5 / 2} u_1,\A^{5/2} u_2 \right\} \right\Vert_{\Hilb \times \Hilb}\,.
			\end{equation}
			
			From inequality \eqref{eq:diff_U_V}, taking into account \eqref{eq:diff_U_V_local}, we arrive at the following estimate:
			\begin{equation}\label{eq:diff_U_V_global_general}
				\left\Vert \left[ U \left( t_k \right) - \left( V \left( \tau \right) \right)^k \right] \boldsymbol{u} \right\Vert_{\Hilb \times \Hilb} \leq \tau^4 \left( \frac{5}{12} \left\vert t_k \right\vert \right) \left\Vert \left\{ \A^{5 / 2} u_1,\A^{5/2} u_2 \right\} \right\Vert_{\Hilb \times \Hilb}\,, \quad k = 1,2,\ldots,n\,.
			\end{equation}
			Obviously, from \eqref{eq:diff_U_V_global_general} for $k = n$ follows \eqref{eq:diff_U_V_global}.
			
			Let us now show the validity of relation \eqref{eq:converg_over_Hilb}. It is evident that from \eqref{eq:diff_U_V_global} follows \eqref{eq:converg_over_Hilb} if the components of the vector $\boldsymbol{u}$ belong to $D \left( \A^{5 / 2} \right)$. If we demonstrate that $D \left( \A^{5 / 2} \right)$ is dense in $\Hilb$, then relation \eqref{eq:converg_over_Hilb} can be extended to the entire Cartesian product space $\Hilb \times \Hilb$. It is not difficult to prove that if the range of a bounded operator acting on $\Hilb$ is dense in $\Hilb$, then it maps a dense set to a dense set. From this fact follows that the set $D \left( \A^2 \right) = \A^{-1} D \left( \A \right)$ is dense in $\Hilb$, since the range of $\A^{-1}$ coincides to $D \left( \A \right)$. From here, in turn, implies that $D \left( \A^{5 / 2} \right) = \A^{-1 / 2} D \left( \A^2 \right)$ is dense in $\Hilb$.
			
			Let $\boldsymbol{u} = \left\{ u_1,u_2 \right\}$ be an arbitrary vector from $\Hilb \times \Hilb$. Given that $D \left( \A^{5 / 2} \right)$ is dense in $\Hilb$, consequently for any $\varepsilon / 3$ there exist vector $\boldsymbol{v} = \left\{ v_1,v_2 \right\}$ with components belong to $D \left( \A^{5 / 2} \right)$ such that $\left\Vert \boldsymbol{u} - \boldsymbol{v} \right\Vert_{\Hilb \times \Hilb} < \varepsilon / 3$. Since relation \eqref{eq:converg_over_Hilb} holds for any vector from $\Hilb \times \Hilb$ whose components belong to $D \left( \A^{5 / 2} \right)$ therefore there exist a natural number $n_0$ such that, for every natural number $n > n_0$, we have:
			\begin{equation*}
				\left\Vert \left( V \left( \frac{t}{n} \right) \right)^n \boldsymbol{v} - U \left( t \right) \boldsymbol{v} \right\Vert_{\Hilb \times \Hilb} < \frac{\varepsilon}{3}\,.
			\end{equation*}
			Evidently, from the following equality
			\begin{align*}
				\left( V \left( \frac{t}{n} \right) \right)^n \boldsymbol{u} - U \left( t \right) \boldsymbol{u} = &\left[ \left( V \left( \frac{t}{n} \right) \right)^n \boldsymbol{u} - \left( V \left( \frac{t}{n} \right) \right)^n \boldsymbol{v} \right] \\
				+ &\left[ \left( V \left( \frac{t}{n} \right) \right)^n \boldsymbol{v} - U \left( t \right) \boldsymbol{v} \right] + \left[ U \left( t \right) \boldsymbol{v} - U \left( t \right) \boldsymbol{u} \right]\,,
			\end{align*}
			by taking into account the above-established fact together with $\left\Vert V \left( t \right) \right\Vert_{\Hilb \times \Hilb} \leq 1$ and $\left\Vert U \left( t \right) \right\Vert_{\Hilb \times \Hilb} = 1$, we conclude
			\begin{equation*}
				\left\Vert \left( V \left( \frac{t}{n} \right) \right)^n \boldsymbol{u} - U \left( t \right) \boldsymbol{u} \right\Vert_{\Hilb \times \Hilb} < \varepsilon\,, \quad \text{when} \quad n > n_0\,.
			\end{equation*}
			Thus, we have shown that the relation \eqref{eq:converg_over_Hilb} is valid for every $\boldsymbol{u} = \left\{ u_1,u_2 \right\} \in \Hilb \times \Hilb$.
		\end{proof}
		
		In view of the estimates established in \lemref{prop:lemma2}, it is natural to investigate the approximation properties under weaker regularity assumptions. More precisely, let $u \in D \left( \A^{2 - i/2} \right)$, with $i = 0,1,2$. One may then ask: what is the order of approximation of the cosine and sine operator functions by the corresponding rational operator functions $\W_1 \left( t \right)$ and $\W_2 \left( t \right)$? The following lemma addresses this question.
		\begin{lemma}\label{prop:lemma3}
			For every $u \in D \left( \A^{2 - i/2} \right)$, with $i = 0,1,2$, the following estimates hold:
			\begin{align*}
				\left\Vert \left( \cos \left( t \A^{1 / 2} \right) - \W_1 \left( t \right) \right) u \right\Vert &\leq c_{1 + 2i} \left\vert t \right\vert^{4 - i} \left\Vert \A^{2 - i/2} u \right\Vert\,, \\
				\left\Vert \left( \sin \left( t \A^{1 / 2} \right) - \W_2 \left( t \right) \right) u \right\Vert &\leq c_{2 + 2i} \left\vert t \right\vert^{4 - i} \left\Vert \A^{2 - i/2} u \right\Vert\,.
			\end{align*}
			Here, the constants are given by $c_1 = 2/3$, $c_2 = 11/24$, $c_3 = 7/6$, $c_4 = 1$, $c_5 = 3/2$, and $c_6 = 11/6$.
		\end{lemma}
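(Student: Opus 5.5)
Fix $i\in\{0,1,2\}$ and $u\in D\left(\A^{2-i/2}\right)$. I would mimic the proof of \lemref{prop:lemma2}, but truncate both the Taylor expansions of the trigonometric operator functions and the expansions of $\W_1,\W_2$ one or two orders earlier, matching the available smoothness. Each error $\cos\left(t\A^{1/2}\right)-\W_1(t)$ and $\sin\left(t\A^{1/2}\right)-\W_2(t)$ is split into two pieces: the Taylor remainder of the exact function about a polynomial $P$, and the difference $P-\W_j(t)$. The polynomials are $P=\I-\frac12 t^2\A$ for $i=0,1$ and $P=\I$ for $i=2$ in the cosine case, and $P=t\A^{1/2}-\frac16t^3\A^{3/2}$, $t\A^{1/2}$, $t\A^{1/2}$ in the sine case.

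\textbf{Exact side.} I use \eqref{eq:cos_oper_expans} and \eqref{eq:sin_oper_expans} together with $\|\hat R_n(t)\|\le |t|^{n+1}/(n+1)!$. For the remainder of order $|t|^{4-i}$ against $\A^{2-i/2}$, I write the remainder in the form $\hat R_m(t)\A^{k}$ with the appropriate index. Where the power of $\A$ is off by one half, I use the auxiliary bound $\|\sin(s\A^{1/2})\A^{-1/2}\|\le|s|$, as in the passage from \eqref{eq:estim_sin} to \eqref{eq:estim_sin2}. Alternatively, I re-derive a remainder with cosine kernel from \eqref{eq:kato_integr_form}, using $\|\cos\|\le1$. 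In every case this yields constants like $1/24$, $1/6$, $1/2$.

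\textbf{Rational side.} I reuse the identities leading to \eqref{eq:W1_expans_undef_coeff} and \eqref{eq:W2_expans_undef_coeff}, stopping the geometric expansion of $(\I+\lambda t^2\A)^{-1}$ one step earlier. For example, for $i=0$,
\begin{equation*}
\W_1(t)=\I-\tfrac12 t^2\A+\widetilde R_1^{(0)}(t),
\end{equation*}
where $\widetilde R_1^{(0)}(t)=\left(c\,\I+\text{bounded resolvent products}\right)t^4\A^2$. The key tool is that $\|(\I+\mu t^2\A)^{-1}\|\le 1$ for $\mu$ with $\Re\mu\ge0$; this follows from the spectral theorem since $|1+\mu x|\ge1$ for $x\ge0$. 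When the leftover factor carries $t^4\A^2$ but only $\A^{3/2}u$ is controlled, I bound $\|t\A^{1/2}(\I+\mu t^2\A)^{-1}\|$ by $\sup_{x\ge0}x/|1+\mu x^2|$ to drop a half power. The case $i=2$ needs only $\|\W_1(t)-\I\|$ and $\|\W_2(t)-t\A^{1/2}\|$ against $t^2\A$, which come from the scalar bounds $|r_1(x)-1|\le c\,x^2$ and $|r_2(x)-x|\le c\,x^2$. Adding the two pieces gives the constants $c_1,\dots,c_6$.

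\textbf{Main obstacle.} The main difficulty is the bookkeeping of constants, which must come out exactly as $2/3,\ 11/24,\ 7/6,\ 1,\ 3/2,\ 11/6$. That requires sharp sup-norm bounds for explicit scalar rational functions with complex $\lambda,\lambda_0$ (for instance $|\lambda|^2=1/16+21/144=1/24\cdot 5$, and $|\lambda+\bar\lambda|=1/2$). A cleaner uniform route is to work entirely via the functional calculus. For $i=2$ and odd exponents I would bound $\sup_{x>0}|\cos x-r_1(x)|/x^{4-i}$ directly, and similarly for $\sin x-r_2(x)$, using the triangle-type splitting above to produce the stated (non-optimal) constants.
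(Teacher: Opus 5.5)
Your proposal is correct and takes essentially the paper's route. The paper proves \lemref{prop:lemma3} only by saying it runs as in \lemref{prop:lemma2}: split each error into the Taylor remainder of $\cos$ or $\sin$ and the deviation of $\W_1,\W_2$ from the truncated polynomial, with the truncation lowered to match $D(\A^{2-i/2})$. The remainders are handled through $\hat R_n$ and $\|\sin(s\A^{1/2})\A^{-1/2}\|\le|s|$, which give $1/24$, $1/6$, $1/2$, and the deviations through resolvent and functional-calculus bounds. One correction: the constants need not come out exactly, since $c_1,\dots,c_6$ are admissible upper bounds with room to spare (for $i=2$ your cosine split gives $\tfrac12+\tfrac12=1<c_5$), so you only need your bounds not to exceed them.
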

		\begin{proof}
			The proof of \lemref{prop:lemma3} runs analogously to that of \lemref{prop:lemma2}.
		\end{proof}
		
		Arguing as in \thmref{prop:thrm_unitary_aprox} and employing \lemref{prop:lemma3}, one obtains the following result.
		\begin{theorem}\label{prop:thm_addit_order}
			The sequence of operators $\left\{ \left( V \left( \frac{t}{n} \right) \right)^n \right\}_{n = 1}^{\infty}$ provides an approximation of the unitary group $U \left( t \right)$ on the domain $D \left( \A^{2 - i/2} \right) \times D \left( \A^{2 - i/2} \right)$, for $i = 0,1,2$, with convergence of order $\bigO \left( n^{i - 3} \right)$. More precisely, the following estimates hold:
			\begin{equation*}
				\left\Vert \left[ \left( V \left( \frac{t}{n} \right) \right)^n - U \left( t \right) \right] \boldsymbol{u} \right\Vert_{\Hilb \times \Hilb} \leq \left( \frac{\left\vert t \right\vert}{n} \right)^{3 - i} \left( c_{7 + i} \left\vert t \right\vert \right) \left\Vert \left\{ \A^{2 - i/2} u_1,\A^{2 - i/2} u_2 \right\} \right\Vert_{\Hilb \times \Hilb}\,,
			\end{equation*}
			where $c_7 = 5 / 4$, $c_8 = 7 / 3$, and $c_9 = 11 / 3$.
		\end{theorem}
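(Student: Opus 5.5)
The plan is to repeat the telescoping argument from the proof of \thmref{prop:thrm_unitary_aprox}, replacing the local estimate from \lemref{prop:lemma2} by the one from \lemref{prop:lemma3}. Fix $i \in \{0,1,2\}$, set $\tau = t/n$ and $t_k = k\tau$, and take $\boldsymbol{u} = \{u_1,u_2\}$ with $u_1,u_2 \in D(\A^{2-i/2})$. Write $U(t_k) - (V(\tau))^k$ as the telescoping sum \eqref{eq:diff_Uk_Vk}. Then use $\|V(\tau)\|_{\Hilb\times\Hilb} \le 1$ (\remref{prop:remark_norm_v}) and $\|U(\tau)\|_{\Hilb\times\Hilb} = 1$. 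Since $U(\tau)$ and $V(\tau)$ are both functions of $\A$ and commute, we can move $(U(\tau))^{i-1}$ to the left of the difference $U(\tau)-V(\tau)$. This gives \eqref{eq:diff_U_V}, namely $\|[U(t_k)-(V(\tau))^k]\boldsymbol{u}\| \le k\,\|(U(\tau)-V(\tau))\boldsymbol{u}\|$. This step needs no regularity beyond membership of $\boldsymbol{u}$ in the space.

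Next comes the local estimate. Writing out the components of $(U(\tau)-V(\tau))\boldsymbol{u}$ as in the earlier proof, the first component is $(\cos-\W_1)u_1 + (\sin-\W_2)u_2$ and the second is $-(\sin-\W_2)u_1 + (\cos-\W_1)u_2$. Apply \lemref{prop:lemma3} to each term:
\begin{equation*}
\|(\cos(\tau\A^{1/2})-\W_1(\tau))u_j\| \le c_{1+2i}|\tau|^{4-i}\|\A^{2-i/2}u_j\|, \qquad \|(\sin(\tau\A^{1/2})-\W_2(\tau))u_j\| \le c_{2+2i}|\tau|^{4-i}\|\A^{2-i/2}u_j\|.
\end{equation*}
Then combine them with $(a+b)^2 \le 2a^2+2b^2$, exactly as before. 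Summing the two squared components gives
\begin{equation*}
\|(U(\tau)-V(\tau))\boldsymbol{u}\|_{\Hilb\times\Hilb}^2 \le 2\,(c_{1+2i}^2 + c_{2+2i}^2)\,|\tau|^{2(4-i)}\,\|\{\A^{2-i/2}u_1,\A^{2-i/2}u_2\}\|_{\Hilb\times\Hilb}^2 .
\end{equation*}
Hence the local error is bounded by $\sqrt{2(c_{1+2i}^2+c_{2+2i}^2)}\,|\tau|^{4-i}$ times the norm of the vector. A slightly cruder bound of the form $(c_{1+2i}+c_{2+2i})|\tau|^{4-i}$ can also be obtained, and it is at most $c_{7+i}$ as listed. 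For example, $c_1+c_2 = 2/3+11/24 = 9/8 \le 5/4$ and $\sqrt2\,\sqrt{c_1^2+c_2^2} \approx 1.14 \le 5/4$. Similarly, $c_3+c_4 = 13/6 \le 7/3$ and $c_5+c_6 = 10/3 \le 11/3$. So whichever combination the constants were derived from, the stated $c_{7+i}$ dominate the local constant.

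Finally, multiply the local bound by $k$ and take $k=n$. Since $n|\tau|^{4-i} = |t|\,(|t|/n)^{3-i}$, we obtain
\begin{equation*}
\|[(V(t/n))^n - U(t)]\boldsymbol{u}\|_{\Hilb\times\Hilb} \le (|t|/n)^{3-i}\,c_{7+i}|t|\,\|\{\A^{2-i/2}u_1,\A^{2-i/2}u_2\}\|_{\Hilb\times\Hilb},
\end{equation*}
which is the order $\bigO(n^{i-3})$ claimed.

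The theorem itself is therefore routine once \lemref{prop:lemma3} is granted. The only real obstacle is the numerical bookkeeping of the constants, that is, confirming that the chosen way of combining $c_{1+2i}$ and $c_{2+2i}$ stays below $c_{7+i}$. I would check this case by case for $i=0,1,2$ as above.
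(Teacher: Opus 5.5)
Your proposal is correct and follows the paper's own argument: the paper obtains this theorem by rerunning the telescoping bound \eqref{eq:diff_U_V} from \thmref{prop:thrm_unitary_aprox}, with the local estimates of \lemref{prop:lemma3} in place of those of \lemref{prop:lemma2}, and your case-by-case check confirms that the stated $c_{7+i}$ dominate the resulting local constants (the paper's combination, as in \thmref{prop:thrm_unitary_aprox}, is $\sqrt{2(c_{1+2i}^2+c_{2+2i}^2)}$). One small correction: the bound with $c_{1+2i}+c_{2+2i}$ is not cruder but slightly sharper, since $a+b\le\sqrt{2(a^2+b^2)}$ for $a,b\ge 0$ (it follows from the triangle inequality applied to $(ax+by,\,bx+ay)$).
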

		
		Theorems \ref{prop:thrm_unitary_aprox} and \ref{prop:thm_addit_order} characterize the relationship between the scale of smoothness and the corresponding order of convergence for smoothness levels not exceeding $5/2$. It is therefore natural to consider the behavior in the regime of higher regularity. In this context, the following observation holds and is formulated as a remark.
		\begin{remark}\label{prop:remark_regularity}
			The order of convergence in estimate \eqref{eq:diff_U_V_global} is optimal and cannot be improved. In particular, even under stronger smoothness assumptions beyond $D \left( \A^{5 / 2} \right)$, the convergence order in \eqref{eq:diff_U_V_global} does not exceed four. This limitation is a consequence of the local approximation properties of the associated scalar rational functions $r_1 \left( x \right)$ and $r_2 \left( x \right)$, corresponding to the operator-valued functions $\W_1 \left( t \right)$ and $\W_2 \left( t \right)$. Specifically, in a neighborhood of $x = 0$, the functions $r_1 \left( x \right)$ and $r_2 \left( x \right)$ approximate $\cos \left( x \right)$ and $\sin \left( x \right)$, respectively, with maximal order $\bigO \left( x^5 \right)$.
		\end{remark}
		
		\subsection{Approximation in the Case of Minimal Regularity}
		
		In our opinion, from the point of view of practical application, the issue of approximating the unitary operator group $U \left( t \right)$ by means of a rational matrix operator function under minimal regularity assumptions is important. In particular, on the Cartesian product $D \left( \A^{1/2} \right) \times D \left( \A^{1/2} \right) \subset \Hilb \times \Hilb$.
		
		\begin{theorem}
			The sequence of operators $\left\{ \left( V \left( \frac{t}{n} \right) \right)^n \right\}_{n = 1}^{\infty}$ approximates the unitary operator group $U \left( t \right)$ on the domain $D \left( \A^{1/2} \right) \times D \left( \A^{1/2} \right)$, with convergence order $\bigO\left( n^{-1/4} \right)$. Specifically, the following estimate holds:
			\begin{equation}\label{eq:U_minus_V_global}
				\left\Vert \left[ \left( V \left( \frac{t}{n} \right) \right)^n - U \left( t \right) \right] \boldsymbol{u} \right\Vert_{\Hilb \times \Hilb} \leq \left( \frac{\left\vert t \right\vert}{n} \right)^{1/4} a \left( t \right) \left\Vert \left\{ \A^{1 / 2} u_1,\A^{1 / 2} u_2 \right\} \right\Vert_{\Hilb \times \Hilb}\,,
			\end{equation}
			here, $u_1$ and $u_2$ belong to $D \left( \A^{1 / 2} \right)$,
			\begin{equation*}
				a \left( t \right) = \left\vert t \right\vert^{3/4} \left( \frac{176 \sqrt{2}}{9} \sqrt[4]{\frac{111}{50}} + 3 \right)\,.
			\end{equation*}
		\end{theorem}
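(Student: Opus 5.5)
We will derive the minimal-regularity estimate by interpolating between the smooth-data error bound of \thmref{prop:thrm_unitary_aprox} and the trivial bound coming from stability, using a regularization of the data. The intermediate object is a smoothed vector $\boldsymbol{u}_\varepsilon = \{ (\I+\varepsilon\A)^{-2} u_1, (\I+\varepsilon\A)^{-2} u_2 \}$, with $\varepsilon>0$ to be optimized.

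Write
\begin{equation*}
\left[ \left( V \left( \tfrac{t}{n} \right) \right)^n - U \left( t \right) \right] \boldsymbol{u} = \left[ \left( V \left( \tfrac{t}{n} \right) \right)^n - U \left( t \right) \right] \boldsymbol{u}_\varepsilon + \left[ \left( V \left( \tfrac{t}{n} \right) \right)^n - U \left( t \right) \right] (\boldsymbol{u}-\boldsymbol{u}_\varepsilon).
\end{equation*}
For the second term we use $\| V(\tau)\|\le 1$ (\remref{prop:remark_norm_v}) and $\|U(t)\|=1$, so it is bounded by $2\|\boldsymbol{u}-\boldsymbol{u}_\varepsilon\|$. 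By the spectral theorem, $\I-(\I+\varepsilon\A)^{-2} = \A^{1/2}\, g_\varepsilon(\A)$ with $g_\varepsilon(x)=x^{-1/2}(1-(1+\varepsilon x)^{-2})$. Since $1-(1+s)^{-2}\le \min(2s,1)$, we get $\sup_x |g_\varepsilon(x)|\le C\varepsilon^{1/2}$. Hence this term is at most $C'\varepsilon^{1/2}\|\{\A^{1/2}u_1,\A^{1/2}u_2\}\|$.

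For the first term, $u_{j,\varepsilon}\in D(\A^{5/2})$, so estimate \eqref{eq:diff_U_V_global} applies and gives the bound $\frac{5}{12}|t|^5 n^{-4}\|\A^{5/2}\boldsymbol{u}_\varepsilon\|$. We then write $\A^{5/2}(\I+\varepsilon\A)^{-2}u = \A^2(\I+\varepsilon\A)^{-2}\A^{1/2}u$. Since $\sup_x x^2/(1+\varepsilon x)^2 = \varepsilon^{-2}$, this term is at most $\frac{5}{12}|t|^5 n^{-4}\varepsilon^{-2}\|\{\A^{1/2}u_1,\A^{1/2}u_2\}\|$.

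We then balance $\varepsilon^{1/2}$ against $|t|^5 n^{-4}\varepsilon^{-2}$. This gives $\varepsilon \sim |t|^2 n^{-8/5}$ and an order $n^{-4/5}$, which is better than the claimed $n^{-1/4}$. The stated rate $(|t|/n)^{1/4}|t|^{3/4}$ therefore suggests the authors used a cruder route: for example, interpolating from the $i=2$ case of \thmref{prop:thm_addit_order} (data in $D(\A)$, rate $n^{-1}$) with a first-order smoothing $(\I+\varepsilon\A)^{-1}$ and a nonoptimal splitting.

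Either route proves the claim, since a faster rate implies the stated one whenever $|t|/n\le 1$. When $|t|/n>1$, the trivial bound $2\|\boldsymbol{u}\|\le 2\|\A^{-1/2}\|\,\|\A^{1/2}\boldsymbol{u}\|$ can be used, at the cost of a constant involving the lower bound of $\A$. To match the paper's constant $a(t)$, I would follow the $D(\A)$-based route: use $c_9=11/3$ from \thmref{prop:thm_addit_order} together with the smoothing bound $\|\A(\I+\varepsilon\A)^{-1}\A^{-1/2}\|\le \tfrac12\varepsilon^{-1/2}$. Then I would choose $\varepsilon$ proportional to $(|t|/n)^{1/2}$ so that both pieces scale like $(|t|/n)^{1/4}|t|^{3/4}$; the additive $3$ in $a(t)$ would come from the stability terms.

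The main obstacle is purely bookkeeping. The task is to pick the specific smoothing operator and the scaling of $\varepsilon$ that reproduce exactly the exponent $1/4$ and the constant $\frac{176\sqrt2}{9}\sqrt[4]{111/50}+3$. The structural argument of density, stability and interpolation is routine given the earlier results. If exact constants are not required, I would present the optimized version and note that it implies \eqref{eq:U_minus_V_global}.
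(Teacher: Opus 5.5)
Your proof is correct, and it takes a genuinely different route from the paper. The paper does not regularize the data. It works directly with the telescoping identity \eqref{eq:diff_Uk_Vk}. Each term with $i<k$ is factored through $(U(\tau)-V(\tau))A^{-1}$, of norm at most $\tfrac{11}{3}\tau^2$, and $A^{1/2}(V(\tau))^{k-i}$, with $A=\mathrm{diag}(\A,\A)$. For the second factor the paper proves a discrete smoothing bound $\|A^{1/2}(V(\tau))^{m}\|\le 2\sqrt{2}\,a_1|\tau|^{-1}m^{-1/4}$, using de Moivre's formula and a damping inequality $r_1^2+r_2^2\le(1+a_0x^4)^{-1}$, and then sums $\sum_{m<k}m^{-1/4}\le\tfrac43 k^{3/4}$. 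The additive $3$ in $a(t)$ comes from the single term $i=k$, bounded by $\|(U(\tau)-V(\tau))\boldsymbol{u}\|\le 3|\tau|\,\|\{\A^{1/2}u_1,\A^{1/2}u_2\}\|$. It does not come from $c_9$ or a $D(\A)$-based interpolation, so your plan to reproduce the paper's constant that way can be dropped.

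Your argument needs only $\|V(\tau)\|\le 1$ and \eqref{eq:diff_U_V_global}, so it applies to any stable fourth-order approximation, dissipative or not. It also gives a sharper rate: with $\varepsilon=t^2n^{-8/5}$ you get $(2\sqrt{2}+\tfrac{5}{12})\,|t|\,n^{-4/5}\,\|\{\A^{1/2}u_1,\A^{1/2}u_2\}\|$. Since $(|t|/n)^{1/4}a(t)=K|t|\,n^{-1/4}$ with $K=\tfrac{176\sqrt2}{9}\sqrt[4]{111/50}+3\approx 36.8$, your bound implies the stated one for every $n\ge 1$ and every $t$. You should write this constant comparison explicitly instead of arguing by rates. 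Then no case $|t|/n>1$ arises, and your fallback through $\|\A^{-1/2}\|$, which would bring in a non-absolute constant, is not needed.

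Your route is also the more reliable one. Since $\lambda_0\bar\lambda_0=\tfrac{5}{48}$ multiplies $t^4\A^2$, the scalar function actually represented by $\W_2$ is $r_2(x)=x/((1+\tfrac16x^2)(1+\tfrac{5}{48}x^4))$; the paper prints $x^2$ in the last factor. For this $r_2$ one computes $1-r_1^2(x)-r_2^2(x)=\tfrac{227}{1728}x^8+\bigO(x^{10})$. So $r_1^2+r_2^2\le 1$ holds, and your use of $\|V(\tau)\|\le1$ is safe. However, the $x^4$-damping inequality used in \eqref{eq:re_w_2} fails for small $x>0$. With the correct behaviour, the supremum in \eqref{eq:re_w} is only of order $m^{-1/8}$, not $m^{-1/4}$. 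Your interpolation argument does not depend on this damping.
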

		\begin{proof}
			Let us introduce the following matrix operator:
			\begin{equation*}
				A =
				\left(
				\begin{array}{rr}
					\A & 0 \\
					0 & \A
				\end{array}
				\right)\,.
			\end{equation*}
			Using this denotation and the property of the unitary operator $\left\Vert \left( U \left( \tau \right) \right)^k \right\Vert = \left\Vert \left( U \left( t_k \right) \right) \right\Vert = 1$, from representation \eqref{eq:diff_Uk_Vk} it follows that
			\begin{align}\label{eq:approx_domain_sqrt_A}
				&\left\Vert \left[ U \left( t_k \right) - \left( V \left( \tau \right) \right)^k \right] \boldsymbol{u} \right\Vert_{\Hilb \times \Hilb}\nonumber \\
				&\leq \sum_{i = 1}^{k - 1} \left\Vert \left(  U \left( \tau \right) -  V \left( \tau \right) \right) A^{-1} \right\Vert_{\Hilb \times \Hilb} \left\Vert A^{1/2} \left( V \left( \tau \right) \right)^{k - i} \right\Vert_{\Hilb \times \Hilb} \left\Vert A^{1/2} \boldsymbol{u} \right\Vert_{\Hilb \times \Hilb}\nonumber \\
				&+ \left\Vert \left(  U \left( \tau \right) -  V \left( \tau \right) \right) \boldsymbol{u}  \right\Vert_{\Hilb \times \Hilb}\,, \quad \text{with} \quad \tau = \frac{t}{n}\,.
			\end{align}
			
			By performing standard algebraic manipulations, analogous to those used in the proof of inequality \eqref{eq:diff_U_V_local}, one obtains:
			\begin{equation*}
				\left\Vert \left(  U \left( \tau \right) -  V \left( \tau \right) \right) \boldsymbol{u} \right\Vert_{\Hilb \times \Hilb} \leq \frac{11}{3} \tau^2 \left\Vert A \boldsymbol{u} \right\Vert_{\Hilb \times \Hilb}\,.
			\end{equation*}
			Hence, we have:
			\begin{equation}\label{eq:diff_UV_inv_A}
				\left\Vert \left(  U \left( \tau \right) -  V \left( \tau \right) \right) A^{-1} \right\Vert_{\Hilb \times \Hilb} \leq \frac{11}{3} \tau^2\,.
			\end{equation}
			
			Let us estimate the norm of the operator $A^{1/2} \left( V \left( \tau \right) \right)^k$. For this, we need one simple fact. Let $C$ be the following second-order matrix with entries of real numbers $x$ and $y$:
			\begin{equation*}
				C =
				\left(
				\begin{array}{rr}
					x & y \\
					-y & x
				\end{array}
				\right)\,.
			\end{equation*}
			Using de Moivre's formula, it is easy to obtain:
			\begin{equation*}
				C^n =
				\left(
				\begin{array}{rr}
					\Re \left( z^n \right) & \Im \left( z^n \right) \\
					-\Im \left( z^n \right) & \Re \left( z^n \right)
				\end{array}
				\right)\,, \quad \text{with} \quad z = x + \ii y\,.
			\end{equation*}
			Considering this fact, we have:
			\begin{equation*}
				 \left( V \left( \tau \right) \right)^k =
				 \left(
				 \begin{array}{rr}
				 	\Re \left( \left( \W \left( \tau \right) \right)^k \right) & \Im \left( \left( \W \left( \tau \right) \right)^k \right) \\
				 	-\Im \left( \left( \W \left( \tau \right) \right)^k \right) & \Re \left( \left( \W \left( \tau \right) \right)^k \right)
				 \end{array}
				 \right)\,, \quad \text{with} \quad \W \left( \tau \right) = \W_1 \left( \tau \right) + \ii \W_2 \left( \tau \right)\,.
			\end{equation*}
			By virtue of this representation, we find that
			\begin{equation}\label{eq:sqrt_Av}
				\left\Vert A^{1/2} \left( V \left( \tau \right) \right)^k \boldsymbol{u} \right\Vert_{\Hilb \times \Hilb} \leq \sqrt{2} \left( \left\Vert A^{1/2} \Re \left( \left( \W \left( \tau \right) \right)^k \right) \right\Vert + \left\Vert A^{1/2} \Im \left( \left( \W \left( \tau \right) \right)^k \right) \right\Vert \right) \left\Vert \boldsymbol{u} \right\Vert_{\Hilb \times \Hilb}\,.
			\end{equation}
			
			As it is known, the norm of an operator function, when the argument of the function represents a self-adjoint bounded operator, is less than or equal to the uniform norm of the corresponding scalar function (see, \eg{}, \cite{ReedSimon1980}, {\bfseries Chapter VII}). Using this fact, we arrive at the inequality:
			\begin{align}\label{eq:re_w}
				\left\Vert A^{1/2} \Re \left( \left( \W \left( \tau \right) \right)^k \right) \right\Vert &\leq \frac{1}{\left\vert \tau \right\vert} \sup_{x \geq 0} \left[ x \left\vert \Re \left( \left( r_1 \left( x \right) + \ii r_2 \left( x \right) \right)^k \right) \right\vert \right]\nonumber \\
				&\leq \frac{1}{\left\vert \tau \right\vert} \sup_{x \geq 0} \left( x \left( r_1^2 \left( x \right) + r_2^2 \left( x \right) \right)^{k/2} \right)\,.
			\end{align}
			
			Performing routine calculations, we obtain the following inequality:
			\begin{equation*}
				r_1^2 \left( x \right) + r_2^2 \left( x \right) \leq \frac{1}{1 + a_0 x^4}\,, \quad \text{where} \quad a_0 = \frac{25}{3552}\,.
			\end{equation*}
			If we substitute this inequality into \eqref{eq:re_w}, it follows that:
			\begin{equation}\label{eq:re_w_2}
				\left\Vert A^{1/2} \Re \left( \left( \W \left( \tau \right) \right)^k \right) \right\Vert \leq \frac{1}{\left\vert \tau \right\vert} \sup_{x \geq 0} \frac{x}{\left( 1 + a_0 x^4 \right)^{k/2}}\,, \quad k \geq 1\,.
			\end{equation}
			If we apply Bernoulli's inequality for $k \geq 2$ to the function on the right-hand side of inequality \eqref{eq:re_w_2}, we obtain:
			\begin{equation}\label{eq:Bernoulli}
				\sup_{x \geq 0} \frac{x}{\left( 1 + a_0 x^4 \right)^{k/2}} \leq \sup_{x \geq 0} \frac{x}{1 + \frac{k}{2} a_0 x^4} \leq \frac{3}{2} \sqrt[4]{\frac{148}{25}} \frac{1}{\sqrt[4]{k}}\,. 
			\end{equation}
			For $k = 1$, the function on the right-hand side of inequality \eqref{eq:re_w_2} does not exceed $\sqrt[4]{888/25}$ when $x \geq 0$. Taking this into account, together with estimate \eqref{eq:Bernoulli}, we finally have from \eqref{eq:re_w_2}:
			\begin{equation}\label{eq:re_w_final}
				\left\Vert A^{1/2} \Re \left( \left( \W \left( \tau \right) \right)^k \right) \right\Vert \leq \frac{a_1}{\left\vert \tau \right\vert} \frac{1}{\sqrt[4]{k}}\,, \quad a_1 = 2 \sqrt[4]{\frac{111}{50}}\,.
			\end{equation}
			
			It should be noted that inequality \eqref{eq:re_w_final} also holds for the operator $A^{1/2} \Im \left( \left( \W \left( \tau \right) \right)^k \right)$. Taking this into account, together with bound \eqref{eq:re_w_final}, the following estimate follows from \eqref{eq:sqrt_Av}:
			\begin{equation}\label{eq:final_sqrt_Av}
				\left\Vert A^{1/2} \left( V \left( \tau \right) \right)^k \right\Vert_{\Hilb \times \Hilb} \leq \frac{2 \sqrt{2} a_1}{\left\vert \tau \right\vert} \frac{1}{\sqrt[4]{k}}\,.
			\end{equation}
			
			Taking into account estimates \eqref{eq:diff_UV_inv_A} and \eqref{eq:final_sqrt_Av} in \eqref{eq:approx_domain_sqrt_A} yields:
			\begin{align}\label{eq:prior_final_est}
				\left\Vert \left[ U \left( t_k \right) - \left( V \left( \tau \right) \right)^k \right] \boldsymbol{u} \right\Vert_{\Hilb \times \Hilb} &\leq \left( \frac{88 \sqrt{2}}{9} a_1 \left\vert t_k \right\vert^{3/4} \right) \sqrt[4]{\left\vert \tau \right\vert} \left\Vert A^{1/2} \boldsymbol{u} \right\Vert_{\Hilb \times \Hilb}\nonumber \\
				&+ \left\Vert \left(  U \left( \tau \right) -  V \left( \tau \right) \right) \boldsymbol{u}  \right\Vert_{\Hilb \times \Hilb}\,.
			\end{align}
			
			It remains to estimate the second term appearing on the right-hand side of inequality \eqref{eq:prior_final_est}. Let us represent the difference $\cos \left( \tau \A^{1 / 2} \right) - \W_1 \left( \tau \right)$ as follows:
			\begin{equation}\label{eq:cosine_minus_W}
				\cos \left( \tau \A^{1 / 2} \right) - \W_1 \left( \tau \right) = \left( \cos \left( \tau \A^{1 / 2} \right) - \I \right) + \left( \I - \W_1 \left( \tau \right) \right)\,.
			\end{equation}
			
			Obviously, the following representation holds:
			\begin{equation*}
				\cos \left( \tau \A^{1 / 2} \right) - \I = - \A^{1/2} \int_{0}^{\tau} \sin \left( s \A^{1 / 2} \right) \d s\,.
			\end{equation*}
			Hence, we have
			\begin{equation}\label{eq:cosine_minus_id}
				\left\Vert \left( \cos \left( \tau \A^{1 / 2} \right) - \I \right) \varphi \right\Vert \leq \left\vert \tau \right\vert \left\Vert \A^{1 / 2} \varphi \right\Vert\,, \quad \varphi \in D \left( \A^{1 / 2} \right)\,.
			\end{equation}
			
			Let us estimate the vector $\left( \I - \W_1 \left( \tau \right) \right) \varphi$. By performing standard algebraic manipulations and taking into account that the norm of an operator-valued function, whose argument is a bounded self-adjoint operator, does not exceed the uniform norm of the corresponding scalar function, one obtains:
			\begin{align}\label{eq:id_minus_W}
				\left\Vert \left( \I - \W_1 \left( \tau \right) \right) \varphi \right\Vert &\leq \left\vert \tau \right\vert \left\Vert \left\vert \tau \right\vert \A^{1/2} \left( \I + \lambda \tau^2 \A \right)^{-1} \left[ \lambda \I + \bar{\lambda} \left( \I + \bar{\lambda} \tau^2 \A \right)^{-1} \right] \right\Vert \left\Vert \A^{1/2} \varphi \right\Vert\nonumber \\
				&\leq \left\vert \tau \right\vert \sup_{x \geq 0} \frac{\frac{1}{2} x + \frac{5}{24} x^3}{1 + \frac{1}{2} x^2 + \frac{5}{24} x^4} \left\Vert \A^{1/2} \varphi \right\Vert \leq \frac{\left\vert \tau \right\vert}{2} \left\Vert \A^{1/2} \varphi \right\Vert\,.
			\end{align}
			
			From \eqref{eq:cosine_minus_W}, taking into account the estimates \eqref{eq:cosine_minus_id} and \eqref{eq:id_minus_W}, we obtain:
			\begin{equation}\label{eq:est_cosine_minus_W}
				\left\Vert \left( \cos \left( \tau \A^{1 / 2} \right) - \W_1 \left( \tau \right) \right) \varphi \right\Vert \leq \frac{3}{2} \left\vert \tau \right\vert \left\Vert \A^{1/2} \varphi \right\Vert\,, \quad \varphi \in D \left( \A^{1 / 2} \right)\,.
			\end{equation}
			
			We now estimate the difference $\sin \left( \tau \A^{1 / 2} \right) - \W_2 \left( \tau \right)$. To this end, we represent it in the following form:
			\begin{equation}\label{eq:sine_minus_W}
				\sin \left( \tau \A^{1 / 2} \right) - \W_2 \left( \tau \right) = \left( \sin \left( \tau \A^{1 / 2} \right) - \tau \A^{1 / 2} \right) + \left( \tau \A^{1 / 2} - \W_2 \left( \tau \right) \right)\,.
			\end{equation}
			
			A straightforward computation yields:
			\begin{equation}\label{eq:sine_minus_arg}
				\left\Vert \left( \sin \left( \tau \A^{1 / 2} \right) - \tau \A^{1 / 2} \right) \varphi \right\Vert = \frac{1}{2} \left\Vert \int_{0}^{\tau} \sin^2 \left( \frac{s}{2} \A^{1 / 2} \right) \A^{1 / 2}\varphi\, \d s \right\Vert \leq \frac{\left\vert \tau \right\vert}{2} \left\Vert \A^{1/2} \varphi \right\Vert\,.
			\end{equation}
			
			Proceeding analogously to the derivation of \eqref{eq:id_minus_W}, we obtain:
			\begin{equation}\label{eq:arg_minus_W}
				\left\Vert \left( \tau \A^{1 / 2} - \W_2 \left( \tau \right) \right) \varphi \right\Vert \leq \left\vert \tau \right\vert \left\Vert \A^{1 / 2} \varphi \right\Vert\,.
			\end{equation}
			
			Combining relation \eqref{eq:sine_minus_W} with estimates \eqref{eq:sine_minus_arg} and \eqref{eq:arg_minus_W}, we arrive at:
			\begin{equation}\label{eq:est_sine_minus_W}
				\left\Vert \left( \sin \left( \tau \A^{1 / 2} \right) - \W_2 \left( \tau \right) \right) \varphi \right\Vert \leq \frac{3}{2} \left\vert \tau \right\vert \left\Vert \A^{1/2} \varphi \right\Vert\,, \quad \varphi \in D \left( \A^{1 / 2} \right)\,.
			\end{equation}
			
			We are now able to estimate the second term appearing on the right-hand side of inequality \eqref{eq:prior_final_est}. Using inequalities \eqref{eq:est_cosine_minus_W} and \eqref{eq:est_sine_minus_W}, we obtain:
			\begin{equation}\label{eq:second_term_final}
				\left\Vert \left(  U \left( \tau \right) -  V \left( \tau \right) \right) \boldsymbol{u}  \right\Vert_{\Hilb \times \Hilb} \leq 3 \left\vert \tau \right\vert \left\Vert A^{1/2} \boldsymbol{u} \right\Vert_{\Hilb \times \Hilb}\,.
			\end{equation}
			
			Upon substituting inequality \eqref{eq:second_term_final} into \eqref{eq:prior_final_est}, we arrive at the final estimate:
			\begin{equation}\label{eq:U_minus_V_general}
				\left\Vert \left[ U \left( t_k \right) - \left( V \left( \tau \right) \right)^k \right] \boldsymbol{u} \right\Vert_{\Hilb \times \Hilb} \leq a \left( t_k \right) \sqrt[4]{\left\vert \tau \right\vert} \left\Vert A^{1/2} \boldsymbol{u} \right\Vert_{\Hilb \times \Hilb}\,.
			\end{equation}
			Relation \eqref{eq:U_minus_V_global} follows from \eqref{eq:U_minus_V_general} by taking $k = n$.
		\end{proof}
		
		\section{Approximate Solution to the Cauchy Problem for an Abstract Hyperbolic Equation}
		
		\subsection{Linear Problem}\label{subsec:lin_probl}
		
		Consider a Cauchy problem for an abstract hyperbolic equation in Hilbert space $\Hilb$:
		\begin{gather}
			\frac{\d^2 u \left( t \right)}{\d t^2} + \A u \left( t \right) = f \left( t \right)\,, \quad t \in \left[ 0,T \right]\,,\label{eq:abstr_lin_hiperb} \\
			u \left( 0 \right) = \varphi_0\,, \quad \left.\frac{\d u \left( t \right)}{\d t} \right|_{t = 0} = \varphi_1\,.\label{eq:init_data_lin_hiperb}
		\end{gather}
		Here, $\A$ is a densely defined, self-adjoint, positive-definite operator (not necessarily bounded) with domain $D \left( \A \right)$. That is, $\overline{D \left( \A \right)} = \Hilb$, $\A = \A^{\ast}$, and
		\begin{equation*}
			\left( \A u,u \right) \geq \alpha \left\Vert u \right\Vert^2\,, \quad \forall u \in D \left( \A \right)\,, \quad \alpha = \mathrm{const} > 0\,.
		\end{equation*}
		$\varphi_0$ and $\varphi_1$ are given vectors in $\Hilb$; $f \left( t \right)$ is a continuous function with values from $\Hilb$; $u \left( t \right)$ is a continuous and twice continuously differentiable unknown function that takes values from $\Hilb$. In this context, continuity and differentiability are understood in the sense of a metric defined on the Hilbert space.
		
		It is known that if $\varphi_0 \in D \left( \A \right)$, $\varphi_1 \in D \left( \A^{1 / 2} \right)$, and $f \left( t \right)$ is continuous and continuously differentiable function, then there exist such twice continuously differentiable function $u \left( t \right)$ which satisfies equation \eqref{eq:abstr_lin_hiperb} with initial conditions \eqref{eq:init_data_lin_hiperb} (see Kre\u{\i}n \cite{Krein1971}, {\bfseries Chapter III}). In this case, the solution to problem \eqref{eq:abstr_lin_hiperb} is represented in the following way:
		\begin{equation}\label{eq:sol_abstr_hyper}
			u \left( t \right) = \cos\left( t \A^{1 / 2} \right) \varphi_0 + \sin\left( t \A^{1 / 2} \right) \A^{-1 / 2} \varphi_1 + \int_{0}^{t} \sin\left( \left( t - s \right) \A^{1 / 2} \right) \A^{-1 / 2} f \left( s \right) \d s\,.
		\end{equation}
		
		In view of \eqref{eq:sol_abstr_hyper}, we have:
		\begin{equation}\label{eq:vect_repres_sol_abstr_hyper}
			\boldsymbol{w \left( t \right)} = U \left( t \right) \boldsymbol{w \left( 0 \right)} + \int_{0}^{t} U \left( t - s \right) \begin{pmatrix} 0 \\ f \left( s \right) \end{pmatrix} \d s\,.
		\end{equation}
		where $\boldsymbol{w \left( t \right)} = \left( \A^{1 / 2} u \left( t \right),u^{\prime} \left( t \right) \right)^{\top}$.
		
		An important consequence of \eqref{eq:vect_repres_sol_abstr_hyper} is that it allows us to establish the law of conservation of energy. More precisely, we have the following lemma.
		\begin{lemma}
			In the homogeneous case of problem \eqref{eq:abstr_lin_hiperb}–\eqref{eq:init_data_lin_hiperb}, that is, when $f \left( t \right) \equiv 0$, the energy vector $\boldsymbol{w \left( t \right)} = \left( \A^{1 / 2} u \left( t \right),u^{\prime} \left( t \right) \right)^{\top}$ satisfies the following identity:
			\begin{equation*}
				\left\Vert \A^{1 / 2} u \left( t \right) \right\Vert^2 + \left\Vert u^{\prime} \left( t \right) \right\Vert^2 = \left\Vert \A^{1 / 2} \varphi_0 \right\Vert^2 + \left\Vert \varphi_1 \right\Vert^2\,.
			\end{equation*}
			Thus, energy is conserved.
		\end{lemma}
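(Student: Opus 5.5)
The plan is to read the identity off the vector representation \eqref{eq:vect_repres_sol_abstr_hyper} together with the unitarity of the group $\left\{ U \left( t \right) \right\}_{t \in \Real}$ established in Section~2. With $f \left( t \right) \equiv 0$, the integral term in \eqref{eq:vect_repres_sol_abstr_hyper} vanishes. Hence
\begin{equation*}
	\boldsymbol{w \left( t \right)} = U \left( t \right) \boldsymbol{w \left( 0 \right)}\,, \quad \boldsymbol{w \left( 0 \right)} = \left( \A^{1/2} \varphi_0,\varphi_1 \right)^{\top}\,.
\end{equation*}
Under the standing assumptions $\varphi_0 \in D \left( \A \right) \subset D \left( \A^{1/2} \right)$ and $\varphi_1 \in D \left( \A^{1/2} \right)$, this vector lies in $\Hilb \times \Hilb$.

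First I will justify this representation directly from \eqref{eq:sol_abstr_hyper}. Applying $\A^{1/2}$ to $u \left( t \right) = \cos \left( t \A^{1/2} \right) \varphi_0 + \sin \left( t \A^{1/2} \right) \A^{-1/2} \varphi_1$ gives $\cos \left( t \A^{1/2} \right) \A^{1/2} \varphi_0 + \sin \left( t \A^{1/2} \right) \varphi_1$. Here I use that the functions of $\A^{1/2}$ commute with $\A^{1/2}$ on its domain, which follows from the Euler formulas \eqref{eq:euler_formulas}.

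Next I differentiate $u$, using \eqref{eq:kato_integr_form} in its differentiated form $\frac{\d}{\d t} e^{\pm \ii t \sqrt{\A}} \psi = \pm \ii \sqrt{\A}\, e^{\pm \ii t \sqrt{\A}} \psi$ for $\psi \in D \left( \A^{1/2} \right)$. This gives
\begin{equation*}
	u' \left( t \right) = - \sin \left( t \A^{1/2} \right) \A^{1/2} \varphi_0 + \cos \left( t \A^{1/2} \right) \varphi_1\,.
\end{equation*}
These two components are exactly the first and second rows of $U \left( t \right)$ applied to $\boldsymbol{w \left( 0 \right)}$.

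Finally, since $U \left( t \right)^{\ast} U \left( t \right) = I$, we have $\left\Vert U \left( t \right) \boldsymbol{v} \right\Vert_{\Hilb \times \Hilb} = \left\Vert \boldsymbol{v} \right\Vert_{\Hilb \times \Hilb}$ for every $\boldsymbol{v}$. Squaring and using the definition of the product norm yields the claimed identity. I will check $U \left( t \right)^{\ast} U \left( t \right) = I$ via $\cos^2 + \sin^2 = \I$, which follows from \eqref{eq:euler_formulas}, or equivalently via property (iii).

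The only mildly delicate step is the domain bookkeeping: commuting $\A^{1/2}$ past the cosine and sine operators, and differentiating on $D \left( \A^{1/2} \right)$. I expect this to follow routinely from the functional calculus for self-adjoint operators.
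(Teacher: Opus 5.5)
Your proposal is correct and follows the paper's proof: with $f \equiv 0$, the representation \eqref{eq:vect_repres_sol_abstr_hyper} reduces to $\boldsymbol{w \left( t \right)} = U \left( t \right) \boldsymbol{w \left( 0 \right)}$, and the unitarity $U \left( t \right)^{\ast} U \left( t \right) = I$ then gives preservation of the product norm. Your extra step, deriving the two components of $\boldsymbol{w \left( t \right)}$ from \eqref{eq:sol_abstr_hyper} by applying $\A^{1/2}$ and differentiating, justifies something the paper only asserts, but it does not change the argument.
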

		\begin{proof}
			By \eqref{eq:vect_repres_sol_abstr_hyper}, $f \left( t \right) \equiv 0$ implies $\boldsymbol{w \left( t \right)} = U \left( t \right) \boldsymbol{w \left( 0 \right)}$. Since $U \left( t \right)$ is a unitary operator group, it follows immediately that:
			\begin{align*}
				\left\Vert \boldsymbol{w \left( t \right)} \right\Vert_{\Hilb \times \Hilb}^2 &= \left( \left( U \left( t \right) \boldsymbol{w \left( 0 \right)},U \left( t \right) \boldsymbol{w \left( 0 \right)} \right) \right) \\
				&= \left( \left( U^{\ast} \left( t \right) U \left( t \right) \boldsymbol{w \left( 0 \right)}, \boldsymbol{w \left( 0 \right)} \right) \right) \\
				&= \left\Vert \boldsymbol{w \left( 0 \right)} \right\Vert_{\Hilb \times \Hilb}^2 = \left\Vert \A^{1 / 2} \varphi_0 \right\Vert^2 + \left\Vert \varphi_1 \right\Vert^2\,.
			\end{align*}
			This proves the desired equality. Recall that $\left( \left( \cdot,\cdot \right) \right)$ denotes the inner product on the product space $\Hilb \times \Hilb$.
		\end{proof}
		
		Introduce the uniform grid for the interval $\left[ 0,T \right]$ by dividing into $n \geq 2$ equal subintervals. Temporal nodes are denoted by $t_k$, where $t_k = k \tau$ with $\tau = T / n$ and $k = 0,1,\ldots,n$. We consider problem \eqref{eq:abstr_lin_hiperb}–\eqref{eq:init_data_lin_hiperb} on the subinterval $\left[ t_{k - 1},t_k \right]$. According to the formula \eqref{eq:sol_abstr_hyper}, we have
		\begin{align}
			u \left( t_k \right) = \cos\left( \tau \A^{1 / 2} \right) u \left( t_{k - 1} \right) &+ \sin\left( \tau \A^{1 / 2} \right) \A^{-1 / 2} u^{\prime} \left( t_{k - 1} \right)\nonumber \\
			&+ \int_{t_{k - 1}}^{t_k} \sin\left( \left( t_k - s \right) \A^{1 / 2} \right) \A^{-1 / 2} f \left( s \right) \d s\,,\label{eq:exact_u_tk} \\
			u^{\prime} \left( t_k \right) = - \sin\left( \tau \A^{1 / 2} \right) \A^{1 / 2} u \left( t_{k - 1} \right) &+ \cos\left( \tau \A^{1 / 2} \right) u^{\prime} \left( t_{k - 1} \right)\nonumber \\
			&+ \int_{t_{k - 1}}^{t_k} \cos\left( \left( t_k - s \right) \A^{1 / 2} \right) f \left( s \right) \d s\,.\label{eq:exact_diff_u_tk}
		\end{align}
		
		If the integrals appearing in representations \eqref{eq:exact_u_tk} and \eqref{eq:exact_diff_u_tk} are computed by Simpson's 1/3 rule, we obtain the following relations:
		\begin{align}
			u \left( t_k \right) &= \cos\left( \tau \A^{1 / 2} \right) u \left( t_{k - 1} \right) + \sin\left( \tau \A^{1 / 2} \right) \A^{-1 / 2} u^{\prime} \left( t_{k - 1} \right)\nonumber \\
			&+ \frac{\tau}{6} \A^{-1 / 2} g_{1,k} \left( \tau \right) + \A^{-1 / 2} \bar{R}_{1,k} \left( \tau \right)\,,\label{eq:Simps_u_tk} \\
			u^{\prime} \left( t_k \right) &= - \sin\left( \tau \A^{1 / 2} \right) \A^{1 / 2} u \left( t_{k - 1} \right) + \cos\left( \tau \A^{1 / 2} \right) u^{\prime} \left( t_{k - 1} \right)\nonumber \\
			&+ \frac{\tau}{6} g_{2,k} \left( \tau \right) + \bar{R}_{2,k} \left( \tau \right)\,,\label{eq:Simps_diff_u_tk}
		\end{align}
		where
		\begin{align*}
			g_{1,k} \left( \tau \right) = \sin\left( \tau \A^{1 / 2} \right) f_{k - 1} &+ 4 \sin\left( \frac{\tau}{2} \A^{1 / 2} \right) f_{k - 1/2}\,, \\
			g_{2,k} \left( \tau \right) = \cos\left( \tau \A^{1 / 2} \right) f_{k - 1} &+ 4 \cos\left( \frac{\tau}{2} \A^{1 / 2} \right) f_{k - 1/2} + f_k\,, \quad f_k = f\left( t_k \right)\,.
		\end{align*}
		
		The following estimates hold for the remainder terms included in \eqref{eq:Simps_u_tk} and \eqref{eq:Simps_diff_u_tk}:
		\begin{equation}\label{eq:Simps_error_unified}
			\left\Vert \bar{R}_{j,k} \left( \tau \right) \right\Vert \leq \frac{49}{2880} \tau^5 \sum_{i = 0}^{4} \binom{4}{i} \sup_{t \in \left[t_{k - 1},t_k\right]} \left\Vert \A^{2 - i / 2} f^{\left( i \right)} \left( t \right) \right\Vert \quad \text{for} \quad j = 1,2\,.
		\end{equation}
		Here and throughout, the symbol $\binom{m}{i}$ denotes the binomial coefficient, while the superscript notation $\left( i \right)$ represents the $i$-th order of time derivatives.
		
		It should be noted that the estimate \eqref{eq:Simps_error_unified} is not an extension of the well-known estimate of the remainder term of Simpson's rule for a scalar function to the case of an abstract function. As is known, the estimate of the remainder term of Simpson's rule for a scalar function is based on the mean value theorem, which is not valid for an abstract function in the classical sense. We obtained the estimate \eqref{eq:Simps_error_unified} by performing standard algebraic manipulations using formulas \eqref{eq:cos_oper_expans} and \eqref{eq:sin_oper_expans}, along with the Taylor series expansion for the function $f \left( t \right)$.
		
		If we replace the cosine and sine operator functions in \eqref{eq:Simps_u_tk} and \eqref{eq:Simps_diff_u_tk} with the rational operator functions $\W_1 \left( \tau \right)$ and $\W_2 \left( \tau \right)$, respectively, and neglect the remainder terms $\bar{R}_1 \left( \tau \right)$ and $\bar{R}_2 \left( \tau \right)$, then we obtain the following scheme:
		\begin{align}
			u_k^{\left( 0 \right)} &= \W_1 \left( \tau \right) u_{k - 1}^{\left( 0 \right)} + \W_2 \left( \tau \right) \A^{-1 / 2} u_{k - 1}^{\left( 1 \right)} + \frac{\tau}{6} \A^{-1 / 2} \tilde{g}_{1,k}\,,\label{eq:Simps_approx_u_tk} \\
			u_k^{\left( 1 \right)} &= - \W_2 \left( \tau \right) \A^{1 / 2} u_{k - 1}^{\left( 0 \right)} + \W_1 \left( \tau \right) u_{k - 1}^{\left( 1 \right)} + \frac{\tau}{6} \tilde{g}_{2,k}\,,\label{eq:Simps_diff_approx_u_tk}
		\end{align}
		where $k = 1,2,\ldots,n$, $u_0^{\left( 0 \right)} = \varphi_0$, $u_0^{\left( 1 \right)} = \varphi_1$, and
		\begin{equation*}
			\tilde{g}_{1,k} \left( \tau \right) = \W_2 \left( \tau \right) f_{k - 1} + 4 \W_2 \left( \frac{\tau}{2} \right) f_{k - 1/2}\,, \quad \tilde{g}_{2,k} \left( \tau \right) = \W_1 \left( \tau \right) f_{k - 1} + 4 \W_1 \left( \frac{\tau}{2} \right) f_{k - 1/2} + f_k\,.
		\end{equation*}
		
		It is natural to declare $u_k^{\left( 0 \right)}$ and $u_k^{\left( 1 \right)}$ as the approximate values of the functions $u \left( t \right)$ and $u^{\prime} \left( t \right)$, respectively, at the temporal node points $t = t_k$, \ie{}, $\left\{ u \left( t_k \right),u^{\prime} \left( t_k \right) \right\} \approx \left\{ u_k^{\left( 0 \right)},u_k^{\left( 1 \right)} \right\}$.
		
		Before we state \thmref{prop:thm_convrg_w_v}, let us introduce the following notation:
		\begin{equation*}
			\boldsymbol{w_k} = \left( \A^{1 / 2} u \left( t_k \right),u^{\prime} \left( t_k \right) \right)^{\top} \quad \text{and} \quad \boldsymbol{v_k} = \left( \A^{1 / 2}u_k^{\left( 0 \right)},u_k^{\left( 1 \right)} \right)^{\top}\,.
		\end{equation*}
		
		\begin{theorem}\label{prop:thm_convrg_w_v}
			Let the following conditions be fulfilled: $\varphi_0 \in D \left( \A^3 \right)$ and $\varphi_1 \in D \left( \A^{5/2} \right)$. The right-hand side $f \left( t \right)$ is a continuous and continuously differentiable function up to and including the fourth order. Furthermore, $f \left( t \right) \in D \left( \A^{5/2} \right)$ and $f^{\left( i \right)} \left( t \right) \in D \left( \A^{2 - i/2} \right)$ with $i = 1,2,3$ for each fixed value of $t$ from the interval $\left[ 0,T \right]$. Then, for the approximation error, the following estimate holds:
			\begin{align}\label{eq:approx_error_diff_w_v}
				\left\Vert \boldsymbol{w_k} - \boldsymbol{v_k} \right\Vert_{\Hilb \times \Hilb} &\leq \tau^4 \left( \frac{5}{12} t_k \right) \left( \left\Vert \A^3 \varphi_0 \right\Vert + \left\Vert \A^{5/2} \varphi_1 \right\Vert \right)\nonumber \\
				&+ \tau^4 \frac{t_k}{8} \left( \frac{3}{5} \tau + \frac{55}{18} t_k \right) \sup_{0 \leq t \leq t_k} \left\Vert \A^{5/2} f \left( t \right) \right\Vert\nonumber \\
				&+ \tau^4 \left( \frac{49}{1440} t_k \right) \sum_{s = 0}^{4} \binom{4}{s} \sup_{0 \leq t \leq t_k} \left\Vert \A^{2 - s / 2} f^{\left( s \right)} \left( t \right) \right\Vert\,,
			\end{align}
			for $k = 1,2,\ldots,n$.
		\end{theorem}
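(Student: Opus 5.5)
We plan to write both the exact relations \eqref{eq:Simps_u_tk}–\eqref{eq:Simps_diff_u_tk} and the scheme \eqref{eq:Simps_approx_u_tk}–\eqref{eq:Simps_diff_approx_u_tk} in vector form. First apply $\A^{1/2}$ to the first components. With $\boldsymbol{F_k}=\frac{\tau}{6}\left(g_{1,k},g_{2,k}\right)^{\top}$, $\boldsymbol{\tilde F_k}=\frac{\tau}{6}\left(\tilde g_{1,k},\tilde g_{2,k}\right)^{\top}$ and $\boldsymbol{R_k}=\left(\bar R_{1,k},\bar R_{2,k}\right)^{\top}$, this gives
\begin{equation*}
\boldsymbol{w_k}=U(\tau)\boldsymbol{w_{k-1}}+\boldsymbol{F_k}+\boldsymbol{R_k}\,,\qquad \boldsymbol{v_k}=V(\tau)\boldsymbol{v_{k-1}}+\boldsymbol{\tilde F_k}\,,\qquad \boldsymbol{v_0}=\boldsymbol{w_0}\,.
\end{equation*}
Unrolling both recursions yields
\begin{equation*}
\boldsymbol{w_k}-\boldsymbol{v_k}=\left[U(t_k)-V(\tau)^k\right]\boldsymbol{w_0}+\sum_{j=1}^{k}\left[U(\tau)^{k-j}\boldsymbol{F_j}-V(\tau)^{k-j}\boldsymbol{\tilde F_j}\right]+\sum_{j=1}^{k}U(\tau)^{k-j}\boldsymbol{R_j}\,.
\end{equation*}
The three terms of \eqref{eq:approx_error_diff_w_v} correspond to these three sums.

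For the first term we invoke \thmref{prop:thrm_unitary_aprox} with $\boldsymbol{u}=\boldsymbol{w_0}=\{\A^{1/2}\varphi_0,\varphi_1\}$. Since $\varphi_0\in D(\A^3)$ and $\varphi_1\in D(\A^{5/2})$, the components lie in $D(\A^{5/2})$, and we obtain the bound $\tau^4\frac{5}{12}t_k\|\{\A^3\varphi_0,\A^{5/2}\varphi_1\}\|$. This is at most the first line of \eqref{eq:approx_error_diff_w_v}. The third term is handled directly: $U$ is unitary, so its norm is bounded by $\sum_j\|\boldsymbol{R_j}\|\le\sqrt2\,k\max_j\max_i\|\bar R_{i,j}\|$. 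Applying \eqref{eq:Simps_error_unified} with $k\tau=t_k$ gives $\frac{49}{2880}\cdot 2\,\tau^4t_k\sum_s\binom4s\sup\|\A^{2-s/2}f^{(s)}\|$. The factor $2$ matches the constant $\frac{49}{1440}$, so we will use the cruder bound $\|\{a,b\}\|\le\|a\|+\|b\|$.

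The middle term is the main obstacle. We split it telescopically as
\begin{equation*}
U(\tau)^{k-j}\boldsymbol{F_j}-V(\tau)^{k-j}\boldsymbol{\tilde F_j}=\left[U(\tau)^{k-j}-V(\tau)^{k-j}\right]\boldsymbol{F_j}+V(\tau)^{k-j}\left(\boldsymbol{F_j}-\boldsymbol{\tilde F_j}\right)\,.
\end{equation*}
The first piece is controlled by \eqref{eq:diff_U_V_global_general} at time $t_{k-j}$, giving $\tau^4\frac{5}{12}t_{k-j}\|A^{5/2}\boldsymbol{F_j}\|$. Since $\sin$ and $\cos$ are contractions commuting with $\A^{5/2}$, we have $\|A^{5/2}\boldsymbol{F_j}\|\le\frac{\tau}{6}\cdot c\sup\|\A^{5/2}f\|$ with $c$ of order $5$–$6$. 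Summing over $j$ gives $\sum_j\tau\, t_{k-j}\le t_k^2/2$, which yields a term of the form $\tau^4 t_k^2\cdot\text{const}\cdot\sup\|\A^{5/2}f\|$. For the second piece, $\|V\|\le1$ by \remref{prop:remark_norm_v}. The differences $\sin(\tau\A^{1/2})-\W_2(\tau)$, $\sin(\frac\tau2\A^{1/2})-\W_2(\frac\tau2)$, and the cosine analogues are bounded via \lemref{prop:lemma2} by $\tau^5$ (respectively $(\tau/2)^5$) times $\|\A^{5/2}f\|$. After multiplying by $\tau/6$ and summing $k$ times, this gives a $\tau^5 t_k$ contribution, which produces the $\frac{3}{5}\tau$ part of the second line.

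The delicate points in this step are keeping track of the $\A^{-1/2}$ that cancels against the applied $\A^{1/2}$ in the first component, and collecting the constants $\frac{31}{120}$, $\frac{101}{720}$ and the weights $1,4,1$ so that they combine into $\frac{t_k}{8}\left(\frac35\tau+\frac{55}{18}t_k\right)$. We expect the exact constants to require some slack, for instance bounding $t_{k-j}\le t_k$ or using $\sum t_{k-j}\le t_k^2/(2\tau)$, and we would adjust the grouping until the stated numbers emerge. The structure of the estimate, however, follows from the three pieces above.
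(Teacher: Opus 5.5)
Your proposal is correct and follows essentially the paper's route: vector form, unrolling, \thmref{prop:thrm_unitary_aprox} for the homogeneous part, the Simpson remainder bound for the last sum, and a telescoping split of the source term. The only difference is that the paper splits as $U(t_{k-i})(\boldsymbol{g_i}-\boldsymbol{\tilde{g}_i})+(U(t_{k-i})-V(\tau)^{k-i})\boldsymbol{\tilde{g}_i}$, using $\|U\|=1$ and $\|\W_1\|,\|\W_2\|\le 1$, whereas you put the $U-V$ difference on the exact vector and use $\|V\|\le 1$ together with the contractivity of sine and cosine. Both give the same constants: $\frac{55}{144}\tau^4 t_k^2$ from the bound $11\sup\|\A^{5/2}f\|$, and $\frac{287}{3840}\tau^5 t_k\le\frac{3}{40}\tau^5 t_k$ from \lemref{prop:lemma2}; these sum to exactly $\tau^4\frac{t_k}{8}\bigl(\frac35\tau+\frac{55}{18}t_k\bigr)$.
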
		
		\begin{proof}
			By applying $\A^{1 / 2}$ to both sides of the equation \eqref{eq:Simps_u_tk}, the resulting system can be rewritten in matrix-vector form as follows:
			\begin{equation}\label{eq:matrix_vec_Simps_u_tk}
				\boldsymbol{w_k} = U \left( \tau \right) \boldsymbol{w_{k - 1}} + \frac{\tau}{6} \boldsymbol{g_k} + \boldsymbol{r_k} \left( \tau \right) \quad \text{for} \quad k = 1,2,\ldots,n\,,
			\end{equation}
			where $\boldsymbol{g_k} = \left( g_{1,k} \left( \tau \right),g_{2,k} \left( \tau \right) \right)^{\top}$ and $\boldsymbol{r_k} \left( \tau \right) = \left( \bar{R}_{1,k} \left( \tau \right),\bar{R}_{2,k} \left( \tau \right) \right)^{\top}$.
			
			The system of equations \eqref{eq:Simps_approx_u_tk} and \eqref{eq:Simps_diff_approx_u_tk}, in the same way as \eqref{eq:matrix_vec_Simps_u_tk}, may be reformulated as follows:
			\begin{equation}\label{eq:matrix_vec_Simps_approx_u_tk}
				\boldsymbol{v_k} = V \left( \tau \right) \boldsymbol{v_{k - 1}} + \frac{\tau}{6} \boldsymbol{\tilde{g}_k} \quad \text{for} \quad k = 1,2,\ldots,n\,,
			\end{equation}
			in which $\boldsymbol{\tilde{g}_k} = \left( \tilde{g}_{1,k} \left( \tau \right),\tilde{g}_{2,k} \left( \tau \right) \right)^{\top}$.
			
			By unrolling the recurrence relations \eqref{eq:matrix_vec_Simps_u_tk} and \eqref{eq:matrix_vec_Simps_approx_u_tk}, we respectively obtain:
			\begin{align}
				\boldsymbol{w_k} &= U \left( t_k \right) \boldsymbol{w_0} + \frac{\tau}{6} \sum_{i = 1}^{k} U \left( t_{k - i} \right) \boldsymbol{g_i} + \sum_{i = 1}^{k} U \left( t_{k - i} \right) \boldsymbol{r_i} \left( \tau \right)\,,\label{eq:unrolling_wk} \\
				\boldsymbol{v_k} &= \left( V \left( \tau \right) \right)^k \boldsymbol{v_0} + \frac{\tau}{6} \sum_{i = 1}^{k} \left( V \left( \tau \right) \right)^{k - i} \boldsymbol{\tilde{g}_i}\,.\label{eq:unrolling_vk}
			\end{align}
			
			If we subtract \eqref{eq:unrolling_vk} from \eqref{eq:unrolling_wk} and take into account that $\boldsymbol{w_0} = \boldsymbol{v_0} = \left( \A^{1 / 2} \varphi_0,\varphi_1 \right)^{\top}$, we obtain:
			\begin{align}\label{eq:norm_diff_w_v}
				\boldsymbol{w_k} - \boldsymbol{v_k} &= \left( U \left( t_k \right) - \left( V \left( \tau \right) \right)^k \right) \boldsymbol{w_0} + \frac{\tau}{6} \sum_{i = 1}^{k} U \left( t_{k - i} \right) \left( \boldsymbol{g_i} - \boldsymbol{\tilde{g}_i} \right)\nonumber \\
				&+ \frac{\tau}{6} \sum_{i = 1}^{k} \left( U \left( t_{k - i} \right) - \left( V \left( \tau \right) \right)^{k - i} \right) \boldsymbol{\tilde{g}_i} + \sum_{i = 1}^{k} U \left( t_{k - i} \right) \boldsymbol{r_i} \left( \tau \right)\,.
			\end{align}
			
			Let us estimate the norm of the difference $\boldsymbol{g_i} - \boldsymbol{\tilde{g}_i}$. Taking into account estimates \eqref{eq:cos_approx} and \eqref{eq:sin_approx}, we arrive at the following bound:
			\begin{align}\label{eq:diff_g_tilde_g}
				\left\Vert \boldsymbol{g_i} - \boldsymbol{\tilde{g}_i} \right\Vert_{\Hilb \times \Hilb} &\leq \left\Vert \left( \sin \left( \tau \A^{1 / 2} \right) - \W_2 \left( \tau \right) \right) f_{i - 1} \right\Vert + 4 \left\Vert \left( \sin \left( \frac{\tau}{2} \A^{1 / 2} \right) - \W_2 \left( \frac{\tau}{2} \right) \right) f_{i - 1/2} \right\Vert\nonumber \\
				&+ \left\Vert \left( \cos \left( \tau \A^{1 / 2} \right) - \W_1 \left( \tau \right) \right) f_{i - 1} \right\Vert + 4 \left\Vert \left( \cos \left( \frac{\tau}{2} \A^{1 / 2} \right) - \W_1 \left( \frac{\tau}{2} \right) \right) f_{i - 1/2} \right\Vert\nonumber \\
				&\leq \frac{287}{720} \tau^5 \left( \left\Vert \A^{5/2} f_{i - 1} \right\Vert + \frac{1}{8} \left\Vert \A^{5/2} f_{i - 1/2} \right\Vert \right) \leq \frac{287}{640} \tau^5 \sup_{0 \leq t \leq t_i} \left\Vert \A^{5/2} f \left( t \right) \right\Vert\,.
			\end{align}
			
			By \thmref{prop:thrm_unitary_aprox}, we have:
			\begin{equation}\label{eq:diff_Uki_Vki}
				\left\Vert \left( U \left( t_{k - i} \right) - \left( V \left( \tau \right) \right)^{k - i} \right) \boldsymbol{\tilde{g}_i} \right\Vert_{\Hilb \times \Hilb} \leq \tau^4 \left( \frac{5}{12} t_{k - i} \right) \left\Vert \left\{ \A^{5 / 2} \tilde{g}_{1,i},\A^{5/2} \tilde{g}_{2,i} \right\} \right\Vert_{\Hilb \times \Hilb}\,.
			\end{equation}
			
			Since $\left\Vert \W_1 \left( t \right) \right\Vert \leq 1$ and $\left\Vert \W_2 \left( t \right) \right\Vert \leq 1$, therefore the following inequality holds:
			\begin{equation}\label{eq:gi_norm_est}
				\left\Vert \left\{ \A^{5 / 2} \tilde{g}_{1,i},\A^{5/2} \tilde{g}_{2,i} \right\} \right\Vert_{\Hilb \times \Hilb} \leq \left\Vert \A^{5 / 2} \tilde{g}_{1,i} \right\Vert + \left\Vert \A^{5 / 2} \tilde{g}_{2,i} \right\Vert \leq 11 \sup_{0 \leq t \leq t_i} \left\Vert \A^{5/2} f \left( t \right) \right\Vert\,.
			\end{equation}
			Leveraging inequalities \eqref{eq:diff_Uki_Vki} and \eqref{eq:gi_norm_est}, we arrive at the following estimate:
			\begin{equation}\label{eq:sum_diff_Uki_Vki}
				\frac{\tau}{6} \left\Vert \sum_{i = 1}^{k} \left( U \left( t_{k - i} \right) - \left( V \left( \tau \right) \right)^{k - i} \right) \boldsymbol{\tilde{g}_i} \right\Vert_{\Hilb \times \Hilb} \leq \tau^4 \left( \frac{55}{144} t_k^2 \right) \sup_{0 \leq t \leq t_k} \left\Vert \A^{5/2} f \left( t \right) \right\Vert\,.
			\end{equation}
			
			Using inequality \eqref{eq:diff_U_V_global_general}, we find that
			\begin{equation}\label{eq:norm_diff_U_V}
				\left\Vert \left( U \left( t_k \right) - \left( V \left( \tau \right) \right)^k \right) \boldsymbol{w_0} \right\Vert_{\Hilb \times \Hilb} \leq \tau^4 \left( \frac{5}{12} t_k \right) \left( \left\Vert \A^3 \varphi_0 \right\Vert + \left\Vert \A^{5/2} \varphi_1 \right\Vert \right)\,.
			\end{equation}
			
			From \eqref{eq:Simps_error_unified}, we have
			\begin{equation}\label{eq:sum_remainders}
				\left\Vert \sum_{i = 1}^{k} U \left( t_{k - i} \right) \boldsymbol{r_i} \left( \tau \right) \right\Vert_{\Hilb \times \Hilb} \leq \tau^4 \left( \frac{49}{1440} t_k \right) \sum_{s = 0}^{4} \binom{4}{s} \sup_{0 \leq t \leq t_k} \left\Vert \A^{2 - s / 2} f^{\left( s \right)} \left( t \right) \right\Vert
			\end{equation}
			
			If we apply norms to both sides of the equality \eqref{eq:norm_diff_w_v}, take into consideration the inequalities \eqref{eq:diff_g_tilde_g}, \eqref{eq:sum_diff_Uki_Vki}, \eqref{eq:norm_diff_U_V}, and \eqref{eq:sum_remainders}, we derive estimate \eqref{eq:approx_error_diff_w_v}.
		\end{proof}
		
		We note that, for the homogeneous problem, the error estimate \eqref{eq:approx_error_diff_w_v} corresponding to scheme \eqref{eq:Simps_approx_u_tk}-\eqref{eq:Simps_diff_approx_u_tk} was established in \cite{LRT2004}.
		
		We now formulate a theorem that establishes an error estimate for scheme \eqref{eq:Simps_approx_u_tk}-\eqref{eq:Simps_diff_approx_u_tk} in terms of the regularity of the solution.
		
		\begin{theorem}\label{prop:thm_variable_regularity}
			Suppose that for $j = 1,2,3$, the data of problem  \eqref{eq:abstr_lin_hiperb}-\eqref{eq:init_data_lin_hiperb} satisfy the following regularity assumptions, namely: $\varphi_0 \in D \left( \A^{1 + j/2} \right)$, $\varphi_1 \in  D \left( \A^{\left( 1 + j \right)/2} \right)$, $f \left( t \right) \in  D \left( \A^{\left( 1 + j \right)/2} \right)$, and $f^{\left( i \right)} \left( t \right) \in  D \left( \A^{\left( j - i \right)/2} \right)$ with $i = 1,\ldots,j$ (it is assumed that $D \left( \A^0 \right) = \Hilb$). Then the following estimates hold:
			\begin{align*}
				\left\Vert \boldsymbol{w_k} - \boldsymbol{v_k} \right\Vert_{\Hilb \times \Hilb} &\leq \tau^j c_{5 + 5j} t_k \left( \left\Vert \A^{1 + j/2} \varphi_0 \right\Vert + \left\Vert \A^{\left( 1 + j \right)/2} \varphi_1 \right\Vert \right) \\
				&+ \tau^j c_{6 + 5j} t_k \left( c_{7 + 5j} \tau + c_{8 + 5j} t_k \right) \sup_{0 \leq t \leq t_k} \left\Vert \A^{\left( 1 + j \right)/2} f \left( t \right) \right\Vert \\
				&+ \tau^j c_{9 + 5j} t_k \sum_{s = 0}^{j} \binom{j}{s} \sup_{0 \leq t \leq t_k} \left\Vert \A^{\left( j - s \right)/2} f^{\left( s \right)} \left( t \right) \right\Vert\,.
			\end{align*}
			Here, the approximate solution $\boldsymbol{v_k}$ is computed using the scheme \eqref{eq:Simps_approx_u_tk}-\eqref{eq:Simps_diff_approx_u_tk} for $j = 3$. When $j = 2$ and $j = 1$, $\boldsymbol{v_k}$ is defined by a similar scheme, obtained by approximating the integral terms appearing in the exact representation \eqref{eq:Simps_u_tk}-\eqref{eq:Simps_diff_u_tk} using the trapezoidal rule. In this case, the factor $\tau / 6$ in scheme \eqref{eq:Simps_approx_u_tk}-\eqref{eq:Simps_diff_approx_u_tk} is replaced by $\tau / 2$. Moreover, the vectors $\tilde{g}_{1,k}$ and $\tilde{g}_{2,k}$ are given by
			\begin{equation*}
				\tilde{g}_{1,k} \left( \tau \right) = \W_2 \left( \tau \right) f_{k - 1} \quad \text{and} \quad \tilde{g}_{2,k} \left( \tau \right) = \W_1 \left( \tau \right) f_{k - 1} + f_k\,.
			\end{equation*}
			
			The constants involved in the above estimate are absolute constants, and their values are as follows: $\left( c_{10}, c_{11}, c_{12}, c_{13}, c_{14} \right) = \left( \frac{11}{3}, \frac{1}{1}, \frac{5}{3}, \frac{11}{4}, \frac{2}{1} \right)$, $\left( c_{15}, c_{16}, c_{17}, c_{18}, c_{19} \right) = \left( \frac{7}{3}, \frac{1}{4}, \frac{13}{3}, \frac{7}{1}, \frac{5}{6} \right)$, and $\left( c_{20}, c_{21}, c_{22}, c_{23}, c_{24} \right) = \left( \frac{5}{4}, \frac{5}{16}, \frac{3}{4}, \frac{11}{3}, \frac{1}{6} \right)$.
		\end{theorem}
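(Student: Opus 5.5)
The proof follows the template of the proof of \thmref{prop:thm_convrg_w_v}. The only changes are that the fifth-order local bounds are replaced by the reduced-regularity bounds of \lemref{prop:lemma3} and \thmref{prop:thm_addit_order}, and, for $j = 1,2$, Simpson's rule is replaced by the trapezoidal rule.

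\textbf{Step 1: exact one-step recurrence.} For each $j$, I first write the exact solution on $\left[ t_{k-1},t_k \right]$ in the form \eqref{eq:exact_u_tk}--\eqref{eq:exact_diff_u_tk}. For $j = 3$ the integrals are approximated by Simpson's rule. For $j = 1,2$ they are approximated by the trapezoidal rule, whose nodal weights give $\frac{\tau}{2}\left( \sin(\tau\A^{1/2}) f_{k-1} \right)$ and $\frac{\tau}{2}\left( \cos(\tau\A^{1/2}) f_{k-1} + f_k \right)$. Applying $\A^{1/2}$ to the first component gives the vector recurrence $\boldsymbol{w_k} = U(\tau)\boldsymbol{w_{k-1}} + \beta\tau\boldsymbol{g_k} + \boldsymbol{r_k}(\tau)$, with $\beta = 1/6$ or $1/2$. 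The scheme satisfies $\boldsymbol{v_k} = V(\tau)\boldsymbol{v_{k-1}} + \beta\tau\boldsymbol{\tilde g_k}$.

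Unrolling both recurrences gives exactly the four-term decomposition \eqref{eq:norm_diff_w_v}:
\begin{enumerate*}[label=(\alph*)]
\item the initial-data term $\left( U(t_k) - V(\tau)^k \right)\boldsymbol{w_0}$;
\item the term $\beta\tau\sum U(t_{k-i})(\boldsymbol{g_i} - \boldsymbol{\tilde g_i})$;
\item the term $\beta\tau\sum \left( U(t_{k-i}) - V(\tau)^{k-i} \right)\boldsymbol{\tilde g_i}$;
\item the quadrature remainders $\sum U(t_{k-i})\boldsymbol{r_i}$.
\end{enumerate*}

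\textbf{Step 2: bounds on (a)--(c).}
\begin{enumerate*}[label=(\alph*)]
\item I apply \thmref{prop:thm_addit_order} with $i = 3 - j$. This needs $\boldsymbol{w_0} = (\A^{1/2}\varphi_0, \varphi_1)$ to lie in $D(\A^{(1+j)/2})^2$, which is exactly the hypothesis $\varphi_0 \in D(\A^{1+j/2})$, $\varphi_1 \in D(\A^{(1+j)/2})$. It yields $\tau^j c_{10-j} t_k(\cdots)$, and I match the constant $c_{5+5j}$ to $c_{7+(3-j)}$.
\item I use unitarity of $U$ and \lemref{prop:lemma3}, applied to $f_{i-1}$ and $f_{i-1/2}$ in $D(\A^{(1+j)/2})$. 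Each difference is then $\bigO(\tau^{j+1})$. Multiplying by $\beta\tau$ and summing over $k$ steps gives $\tau^{j+1} t_k$, which is the $c\,\tau \cdot t_k$ contribution in the middle term.
\item Since $\|\W_1\|, \|\W_2\| \le 1$ and they commute with $\A$, we have $\|\A^{(1+j)/2}\boldsymbol{\tilde g_i}\| \le M \sup\|\A^{(1+j)/2} f\|$, with $M = 11$ or $3$. Applying \thmref{prop:thm_addit_order} again and summing $\beta\tau\, t_{k-i}$ over $i$ gives $\tau^j t_k^2$.
\end{enumerate*}

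\textbf{Step 3: quadrature remainder (d).} I expect this to be the main obstacle. Set $F(s) = U(t_k - s)(0, f(s))^{\top}$. I must show that the Simpson error (for $j = 3$) or the trapezoidal error (for $j = 1,2$) on one subinterval is bounded by $c\,\tau^{j+1}\sum_{s}\binom{j}{s}\sup\|\A^{(j-s)/2} f^{(s)}\|$.

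The first obstacle is that Simpson's rule is exact only for cubics, so a remainder of order $\tau^4$ (i.e.\ $j = 3$) needs only third derivatives. The second is that no mean value theorem is available for abstract functions. I would therefore work with Taylor's formula with integral remainder. By the Leibniz rule, the $j$-th derivative of $F$ is $\sum_s \binom{j}{s}\, U^{(j-s)}(t_k - s)\, f^{(s)}(s)$, and each derivative of $U$ brings in a factor $A^{1/2}$ in operator norm. Hence $\|F^{(j)}\| \le \sum_s \binom{j}{s}\|\A^{(j-s)/2} f^{(s)}\|$; this is where the hypotheses $f^{(i)} \in D(\A^{(j-i)/2})$ are used.

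The quadrature rule is exact for polynomials of degree $< j$ in the relevant cases: degree $\le 1$ for the trapezoidal rule when $j = 1,2$, and degree $\le 3$ for Simpson's rule when $j = 3$. Consequently the local error is bounded by $C\tau^{j+1}\sup\|F^{(j)}\|$. For $j = 1$ this is simply the $\tau^2\sup\|F'\|$ bound, valid since the trapezoidal rule is exact for constants.

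\textbf{Step 4: summation.} Summing the local errors over $k$ steps, with $U$ unitary, gives $\tau^j t_k$ times the stated sum. Adding the four contributions (a)--(d) yields the claimed estimate. The remaining work is tracking explicit constants, which is routine but tedious.
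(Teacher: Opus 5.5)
Your proposal is correct and follows the paper's route. The paper proves this theorem only by saying it runs analogously to \thmref{prop:thm_convrg_w_v}: it uses the same decomposition \eqref{eq:norm_diff_w_v}, with \lemref{prop:lemma3} and \thmref{prop:thm_addit_order} (taking $i = 3 - j$) replacing \lemref{prop:lemma2} and \thmref{prop:thrm_unitary_aprox}, and the trapezoidal rule for $j = 1,2$. Carrying the constants through as you indicate ($c_{5+5j} = c_{10-j}$, $M = 11$ or $3$, and $\sum_{i} t_{k-i} \le t_k^2/(2\tau)$) gives the listed values. The one minor difference is that you bound the quadrature remainder by a Taylor/Peano-kernel argument on the composite integrand $U(t_k - s)\left(0,f(s)\right)^{\top}$ with the Leibniz rule. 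The paper instead combines the operator expansions \eqref{eq:cos_oper_expans}--\eqref{eq:sin_oper_expans} with Taylor's formula for $f$, and both produce the binomial sum in the last term.
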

		\begin{proof}
			The proof proceeds analogously to that of \thmref{prop:thm_convrg_w_v}.
		\end{proof}
		
		So far, in order to determine the order of convergence of the approximate solution, we have required the data of the continuous problem to possess greater smoothness than is necessary for the existence of a classical solution. It is well known that if $\varphi_0 \in D \left( \A \right)$, $\varphi_1 \in D \left( \A^{1/2} \right)$, and either $f \left( t \right)$ is continuously differentiable, or $f \left( t \right) \in D \left( \A^{1/2} \right)$ for every fixed $t \in \left[ 0,T \right]$ and $\A^{1/2} f \left( t \right)$ is continuous on $\left[ 0,T \right]$, then problem \eqref{eq:abstr_lin_hiperb}–\eqref{eq:init_data_lin_hiperb} admits a unique solution (see Kre\u{\i}n \cite{Krein1971}). A natural question, therefore, arises: under these assumptions, is it possible to determine the order of approximation of the approximate solution obtained by the proposed method?
		
		In this regard, we state below, as a remark, a fact that follows from representation \eqref{eq:norm_diff_w_v}, after replacing $\tau / 6$ by $\tau$ and setting $\boldsymbol{g_k} = \boldsymbol{\tilde{g}_k}= \left( 0,f_k \right)^{\top}$, provided that estimate \eqref{eq:U_minus_V_general} is taken into account and certain standard manipulations are carried out in estimating the remainder term of the right-rectangle quadrature rule for the integral terms occurring in representations \eqref{eq:exact_u_tk} and \eqref{eq:exact_diff_u_tk}.
		\begin{remark}
			Under the above assumptions on $\varphi_0$, $\varphi_1$, and $f \left( t \right)$, the following estimate holds:
			\begin{align*}
				\left\Vert \boldsymbol{w_k} - \boldsymbol{v_k} \right\Vert_{\Hilb \times \Hilb} &\leq \tau^{1/4} a \left( t_k \right) \left( \left\Vert \A \varphi_0 \right\Vert + \left\Vert \A^{1/2} \varphi_1 \right\Vert \right) \\
				&+ \tau^{1/4} t_k \left( a \left( t_k \right) + \tau^{3/4} \right) \sup_{0 \leq t \leq t_k} \left\Vert \A^{1/2} f \left( t \right) \right\Vert + \frac{1}{2} \tau t_k \sup_{0 \leq t \leq t_k} \left\Vert f^{\prime} \left( t \right) \right\Vert\,.
			\end{align*}
		\end{remark}
		
		\subsection{Semi-Linear Problem}
		
		Let us consider a Cauchy problem for an abstract hyperbolic semi-linear equation in Hilbert space $\Hilb$:
		\begin{gather}
			\frac{\d^2 u \left( t \right)}{\d t^2} + \A u \left( t \right) = f \left( t \right) + M \left( u \left( t \right), u^{\prime} \left( t \right), \B u \left( t \right) \right)\,, \quad t \in \left[ 0,T \right]\,,\label{eq:abstr_semi-lin_hiperb} \\
			u \left( 0 \right) = \varphi_0\,, \quad \left.\frac{\d u \left( t \right)}{\d t} \right|_{t = 0} = \varphi_1\,.\label{eq:init_data_semi-lin_hiperb}
		\end{gather}
		
		In this context, the operator $\A$ satisfies the same assumptions as those stated in \hyperref[subsec:lin_probl]{Subsection \ref*{subsec:lin_probl}}. The same applies to the initial conditions and the function $f \left( t \right)$. The operator $\B$ is a closed linear operator satisfying the subordination condition $D \left( \A \right) \subset D \left( \B \right)$, and
		\begin{equation}\label{eq:subord_cond}
			\left\Vert \B \varphi \right\Vert^2 \leq b^2 \left( \A \varphi,\varphi \right)\,,
		\end{equation}
		for all $\varphi \in D \left( \A \right)$ and for some constant $b > 0$. The nonlinear operator $M : \Hilb^3 \to \Hilb$ is assumed to be Lipschitz continuous; \ie{},
		\begin{equation}\label{eq:semilin_lip_cond}
			\left\Vert M \left( x_1, y_1, z_1 \right) - M \left( x_2, y_2, z_2 \right) \right\Vert \leq \alpha_1 \left( \left\Vert x_1 - x_2 \right\Vert + \left\Vert y_1 - y_2 \right\Vert + \left\Vert z_1 - z_2 \right\Vert \right)\,, \quad \alpha_1 > 0\,,
		\end{equation}
		for all $\left( x_j, y_j, z_j \right) \in \Hilb^3$, with $j = 1,2$.
		
		\begin{remark}\label{prop:domain_exst}
			From condition \eqref{eq:subord_cond}, the following relation is deduced:
			\begin{equation}\label{eq:domain_exst_final_res}
				\left\Vert \B \varphi \right\Vert \leq b \left\Vert \A^{1/2} \varphi \right\Vert\,,\quad \forall \varphi \in D \left( \A^{1/2} \right) \subset D \left( \B \right)\,.
			\end{equation}
		\end{remark}
		\begin{proof}
			It is evident that from \eqref{eq:subord_cond}, the following implication arises:
			\begin{equation}\label{eq:domain_exst_relat_norm_B_norm_halfA}
				\left\Vert \B \varphi \right\Vert \leq b \left\Vert \A^{1/2} \varphi \right\Vert\,,\quad \forall \varphi \in D \left( \A \right) \subset D \left( \B \right)\,.
			\end{equation}
			It is known that $D \left( \A \right)$ is a core of $\A^{1/2}$ (see, \textbf{Lemma 3.38} in Kato \cite{Kato1980}). This implies that for every $\varphi \in D \left( \A^{1/2} \right)$, there exists a sequence $\varphi_n \in D \left( \A \right)$ such that $\varphi_n \to \varphi$ and $\A^{1/2} \varphi_n \to \A^{1/2} \varphi$. Consequently, by \eqref{eq:domain_exst_relat_norm_B_norm_halfA}, it follows that $\B \varphi_n$ forms a Cauchy sequence. Given that $\Hilb$ is a complete space, it is evident that this sequence converges. Furthermore, since the operator $\B$ is closed, we conclude that $\varphi \in D \left( \B \right)$ and $\B \varphi_n \to \B \varphi$. As a result of this reasoning and inequality \eqref{eq:domain_exst_relat_norm_B_norm_halfA}, the relation \eqref{eq:domain_exst_final_res} is established.
		\end{proof}
		
		If we express the solution of problem \eqref{eq:abstr_semi-lin_hiperb}–\eqref{eq:init_data_semi-lin_hiperb} using formulas \eqref{eq:exact_u_tk} and \eqref{eq:exact_diff_u_tk}, and then apply the right-rectangle quadrature rule to the integral appearing in the representation of $u \left( t_k \right)$ and the left-rectangle quadrature rule to the integral occurring in the representation of $u^{\prime} \left( t_k \right)$, we obtain:
		\begin{align}
			u \left( t_k \right) &= \cos\left( \tau \A^{1 / 2} \right) u \left( t_{k - 1} \right) + \sin\left( \tau \A^{1 / 2} \right) \A^{-1 / 2} u^{\prime} \left( t_{k - 1} \right)\nonumber \\
			&+ \tau \A^{-1 / 2} g_{1,k} \left( \tau \right) + \A^{-1 / 2} \bar{R}_{1,k} \left( \tau \right)\,,\label{eq:semilin_rect_u_tk} \\
			u^{\prime} \left( t_k \right) &= - \sin\left( \tau \A^{1 / 2} \right) \A^{1 / 2} u \left( t_{k - 1} \right) + \cos\left( \tau \A^{1 / 2} \right) u^{\prime} \left( t_{k - 1} \right)\nonumber \\
			&+ \tau g_{2,k} \left( \tau \right) + \bar{R}_{2,k} \left( \tau \right)\,,\label{eq:semilin_rect_diff_u_tk}
		\end{align}
		where $g_{1,k} \left( \tau \right) = 0$ and
		\begin{equation*}
			g_{2,k} \left( \tau \right) = \cos\left( \tau \A^{1 / 2} \right) \left[ f \left( t_{k - 1} \right) + \F \left( u \left( t_{k - 1} \right) \right) \right]\,, \quad \text{with} \quad \F \left( u \left( t \right) \right) = M \left( u \left( t \right), u^{\prime} \left( t \right), \B u \left( t \right) \right)\,.
		\end{equation*}
		The quantities $\bar{R}_{1,k} \left( \tau \right)$ and $\bar{R}_{2,k} \left( \tau \right)$ denote the remainder terms associated with the corresponding rectangle quadrature rules.
		
		Assume that the following conditions are fulfilled:
		\begin{enumerate}[label=(\alph*)]
			\item\label{item:semilin_a} The function $f \left( t \right)$ is continuously differentiable on $\left[ 0,T \right]$. Furthermore, $f \left( t \right) \in D \left( \A^{1/2} \right)$ for each $t \in \left[ 0,T \right]$ and $\A^{1/2} f \left( t \right)$ is continuous.
			\item\label{item:semilin_b} The function $u \left( t \right)$ is continuous on the interval $\left[ 0,T \right]$ and twice continuously differentiable, with $u^{\prime} \left( t \right) \in D \left( \A^{1/2} \right)$ for each $t \in \left[ 0,T \right]$.
			\item\label{item:semilin_c} The nonlinear operator $M : \Hilb^3 \to \Hilb$ satisfies condition \eqref{eq:semilin_lip_cond} (the Lipschitz condition). Moreover, $\F \left( u \left( t \right) \right) \in D \left( \A^{1/2} \right)$ for each $t \in \left[ 0,T \right]$ and the function $\A^{1/2} \F \left( u \left( t \right) \right)$  is continuous.
		\end{enumerate}
		
		Under these assumptions, the following estimates are valid for the remainders $\bar{R}_{1,k} \left( \tau \right)$ and $\bar{R}_{2,k} \left( \tau \right)$:
		\begin{gather}
			\left\Vert  \bar{R}_{1,k} \left( \tau \right) \right\Vert \leq \frac{\tau^2}{2} J_k \left( f,\F \right)\,,\label{eq:semilin_rem1} \\
			\left\Vert  \bar{R}_{2,k} \left( \tau \right) \right\Vert \leq \frac{\tau^2}{2} \alpha_2 \left( J_k \left( f,\F \right) + \left\Vert f^{\prime} \right\Vert_{C \left( I_k \right)} + \left\Vert u^{\prime} \right\Vert_{C \left( I_k \right)} + \left\Vert u^{\prime \prime} \right\Vert_{C \left( I_k \right)} + b \left\Vert \A^{1 / 2} u^{\prime} \right\Vert_{C \left( I_k \right)} \right)\,,\label{eq:semilin_rem2}
		\end{gather}
		where $\left\Vert u \right\Vert_{C \left( I_k \right)} = \sup\limits_{t \in I_k} \left\Vert u \left( t \right) \right\Vert$ with $I_k = \left[ 0,t_k \right]$, $\alpha_2 = \max \left( 1, \alpha_1 \right)$, and
		\begin{equation*}
			J_k \left( f,\F \right) = \left\Vert \A^{1 / 2} f \right\Vert_{C \left( I_k \right)} + \left\Vert \A^{1 / 2} \F \left( u \right) \right\Vert_{C \left( I_k \right)}\,.
		\end{equation*}
		
		Replacing, in the representations \eqref{eq:semilin_rect_u_tk} and \eqref{eq:semilin_rect_diff_u_tk}, the cosine and sine operator functions by their rational approximations $\W_1 \left( \tau \right)$ and $\W_2 \left( \tau \right)$, respectively, and omitting the remainder terms $\bar{R}_{1,k} \left( \tau \right)$ and $\bar{R}_{2,k} \left( \tau \right)$, yields the following scheme:
		\begin{align}
			u_k^{\left( 0 \right)} &= \W_1 \left( \tau \right) u_{k - 1}^{\left( 0 \right)} + \W_2 \left( \tau \right) \A^{-1 / 2} u_{k - 1}^{\left( 1 \right)} + \tau \A^{-1 / 2} \tilde{g}_{1,k}\,,\label{eq:semilin_rect_u_k0} \\
			u_k^{\left( 1 \right)} &= - \W_2 \left( \tau \right) \A^{1 / 2} u_{k - 1}^{\left( 0 \right)} + \W_1 \left( \tau \right) u_{k - 1}^{\left( 1 \right)} + \tau \tilde{g}_{2,k}\,,\label{eq:semilin_rect_u_k1}
		\end{align}
		where $k = 1,2,\ldots,n$, $u_0^{\left( 0 \right)} = \varphi_0$, $u_0^{\left( 1 \right)} = \varphi_1$, and
		\begin{equation*}
			\tilde{g}_{1,k} \left( \tau \right) = 0\,, \quad \tilde{g}_{2,k} \left( \tau \right) = \W_1 \left( \tau \right) \left( f_{k - 1} + \widetilde{\F}_{k - 1} \right)\,, \quad \text{with} \quad \widetilde{\F}_k = M \left( u_k^{\left( 0 \right)}, u_k^{\left( 1 \right)}, \B u_k^{\left( 0 \right)} \right)\,.
		\end{equation*}
		
		The following theorem holds.
		\begin{theorem}\label{prop:semilin_main_thm}
			Let $\varphi_0 \in D \left( \A \right)$ and $\varphi_1 \in D \left( \A^{1/2} \right)$. Moreover, suppose that conditions \hyperref[item:semilin_a]{\ref{item:semilin_a}}, \hyperref[item:semilin_b]{\ref{item:semilin_b}}, and \hyperref[item:semilin_c]{\ref{item:semilin_c}} are fulfilled. Then the error $\boldsymbol{w_k} - \boldsymbol{v_k}$ of the scheme \eqref{eq:semilin_rect_u_k0}–\eqref{eq:semilin_rect_u_k1} satisfies the following estimate:
			\begin{equation*}
				\left\Vert \boldsymbol{w_k} - \boldsymbol{v_k} \right\Vert_{\Hilb \times \Hilb} \leq \hat{c}_k \left( \tau \right) e^{\mu t_k} \leq  \tau^{1/4} b_0  e^{\mu t_k}\,, \quad k = 1,2,\ldots,n\,,
			\end{equation*}
			where $b_0 = \mathrm{const} > 0$, $\mu = \alpha_1 b_1$, and $b_1 = \sqrt{2} \max \left( 1,\alpha^{-1/2} + b \right)$.
			
			Here, the coefficient $\hat{c}_k \left( \tau \right)$ is of order $\bigO \left( \tau^{1/4} \right)$ and depends linearly on $\left\Vert \A \varphi_0 \right\Vert$, $\left\Vert \A^{1/2} \varphi_1 \right\Vert$, $\left\Vert \A^{1 / 2} f \right\Vert_{C \left( I_k \right)}$, $\left\Vert \A^{1 / 2} \F \left( u \right) \right\Vert_{C \left( I_k \right)}$, $\left\Vert f^{\prime} \right\Vert_{C \left( I_k \right)}$, $\left\Vert u^{\prime} \right\Vert_{C \left( I_k \right)}$, $\left\Vert u^{\prime \prime} \right\Vert_{C \left( I_k \right)}$, and $\left\Vert \A^{1 / 2} u^{\prime} \right\Vert_{C \left( I_k \right)}$.
		\end{theorem}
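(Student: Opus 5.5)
We mimic the proof of \thmref{prop:thm_convrg_w_v}, but with a discrete Gronwall argument for the nonlinear term. First we apply $\A^{1/2}$ to \eqref{eq:semilin_rect_u_tk} and \eqref{eq:semilin_rect_u_k0}, writing both the exact and the discrete systems in matrix-vector form:
\begin{equation*}
\boldsymbol{w_k} = U(\tau)\boldsymbol{w_{k-1}} + \tau \boldsymbol{g_k} + \boldsymbol{r_k}(\tau)\,, \qquad \boldsymbol{v_k} = V(\tau)\boldsymbol{v_{k-1}} + \tau \boldsymbol{\tilde g_k}\,,
\end{equation*}
where $\boldsymbol{g_k} = (0, \cos(\tau\A^{1/2})[f_{k-1}+\F(u(t_{k-1}))])^{\top}$, $\boldsymbol{\tilde g_k} = (0,\W_1(\tau)[f_{k-1}+\widetilde{\F}_{k-1}])^{\top}$, and $\boldsymbol{r_k} = (\bar R_{1,k},\bar R_{2,k})^{\top}$. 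Unrolling both recurrences and subtracting, exactly as in \eqref{eq:norm_diff_w_v}, gives $\boldsymbol{w_k}-\boldsymbol{v_k}$ as the sum of four groups:
\begin{enumerate*}[label=(\roman*)]
\item $(U(t_k)-V(\tau)^k)\boldsymbol{w_0}$;
\item $\tau\sum_i (U(t_{k-i})-V(\tau)^{k-i})\boldsymbol{g_i}$;
\item $\tau\sum_i V(\tau)^{k-i}(\boldsymbol{g_i}-\boldsymbol{\tilde g_i})$;
\item $\sum_i U(t_{k-i})\boldsymbol{r_i}$.
\end{enumerate*}
In (ii)–(iii) we keep $V$ on the discrete part so that $\|V\|\le 1$ (\remref{prop:remark_norm_v}) can be used.

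Groups (i) and (ii) are handled by the minimal-regularity estimate \eqref{eq:U_minus_V_general}. Since $\boldsymbol{w_0}=(\A^{1/2}\varphi_0,\varphi_1)^{\top}$, term (i) is bounded by $\tau^{1/4}a(t_k)(\|\A\varphi_0\|+\|\A^{1/2}\varphi_1\|)$. For (ii), the $\A^{1/2}$-norm of $\boldsymbol{g_i}$ is at most $J_k(f,\F)$, because cosine commutes with $\A^{1/2}$ and has norm at most one. Hence (ii) is at most $\tau^{1/4}t_k a(t_k)J_k$. Group (iv) follows from \eqref{eq:semilin_rem1}–\eqref{eq:semilin_rem2} together with unitarity, and is $\bigO(\tau t_k)$ times the listed norms of $f'$, $u'$, $u''$ and $\A^{1/2}u'$.

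The crucial group is (iii). We split
\begin{equation*}
\boldsymbol{g_i}-\boldsymbol{\tilde g_i} = \bigl(\cos(\tau\A^{1/2})-\W_1(\tau)\bigr)\bigl[f_{i-1}+\F(u(t_{i-1}))\bigr] + \W_1(\tau)\bigl[\F(u(t_{i-1}))-\widetilde{\F}_{i-1}\bigr]\,.
\end{equation*}
By \eqref{eq:est_cosine_minus_W}, the first part is at most $\frac32\tau J_k$. For the second part, $\|\W_1\|\le1$ and the Lipschitz condition \eqref{eq:semilin_lip_cond} give the bound
\begin{equation*}
\alpha_1\bigl(\|u(t_{i-1})-u^{(0)}_{i-1}\| + \|u'(t_{i-1})-u^{(1)}_{i-1}\| + \|\B(u(t_{i-1})-u^{(0)}_{i-1})\|\bigr)\,.
\end{equation*}
We then estimate $\|e\|\le\alpha^{-1/2}\|\A^{1/2}e\|$, which holds by positive definiteness, and $\|\B e\|\le b\|\A^{1/2}e\|$ by \remref{prop:domain_exst}. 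This yields
\begin{equation*}
\|\F(u(t_{i-1}))-\widetilde{\F}_{i-1}\| \le \alpha_1(\alpha^{-1/2}+b+1)\|\ldots\| \le \alpha_1 b_1\|\boldsymbol{w_{i-1}}-\boldsymbol{v_{i-1}}\|_{\Hilb\times\Hilb}\,,
\end{equation*}
using $x+y\le\sqrt2(x^2+y^2)^{1/2}$; this is exactly where $b_1=\sqrt2\max(1,\alpha^{-1/2}+b)$ enters.

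Collecting the four groups, we obtain
\begin{equation*}
\|\boldsymbol{w_k}-\boldsymbol{v_k}\|\le \hat c_k(\tau)+\mu\tau\sum_{i=1}^{k}\|\boldsymbol{w_{i-1}}-\boldsymbol{v_{i-1}}\|\,,
\end{equation*}
with $\hat c_k$ nondecreasing in $k$, linear in the listed norms, and $\bigO(\tau^{1/4})$, its worst term coming from (i)–(ii). The discrete Gronwall lemma, proved by induction using $(1+\mu\tau)^k\le e^{\mu t_k}$, then gives $\|\boldsymbol{w_k}-\boldsymbol{v_k}\|\le\hat c_k(\tau)e^{\mu t_k}$. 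Bounding $\hat c_k\le\tau^{1/4}b_0$ uniformly for $t_k\le T$ and $\tau\le T$ completes the statement.

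The main obstacle is justifying (i)–(ii) under only minimal regularity. This requires that the first components really lie in $D(\A^{1/2})$ and that \eqref{eq:U_minus_V_general} applies to them; it also requires keeping the nonlinear error strictly in the energy norm so that the Gronwall constant is exactly $\mu=\alpha_1b_1$.
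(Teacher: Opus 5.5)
Your proposal is correct and follows essentially the same route as the paper: the same four-term error representation \eqref{eq:semilin_error_eqt} (with $V(\tau)^{k-i}$ acting on the nonlinear defect), \eqref{eq:U_minus_V_general} for the first two groups, \eqref{eq:est_cosine_minus_W} together with the Lipschitz condition, subordination and positive definiteness to obtain the factor $\alpha_1 b_1$, and a discrete Gronwall induction using $\boldsymbol{w_0}=\boldsymbol{v_0}$. The only slip is the intermediate factor $\alpha^{-1/2}+b+1$, which should read $\max(1,\alpha^{-1/2}+b)$ multiplying $\|u'(t_{i-1})-u^{(1)}_{i-1}\|+\|\A^{1/2}(u(t_{i-1})-u^{(0)}_{i-1})\|$ before you apply $x+y\le\sqrt2\,(x^2+y^2)^{1/2}$; your final bound with $b_1$ is right.
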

		
		\begin{proof}
			Similarly to \eqref{eq:norm_diff_w_v}, the following representation is valid for the error of scheme \eqref{eq:semilin_rect_u_k0}–\eqref{eq:semilin_rect_u_k1}:
			\begin{align}\label{eq:semilin_error_eqt}
				\boldsymbol{w_k} - \boldsymbol{v_k} &= \left( U \left( t_k \right) - \left( V \left( \tau \right) \right)^k \right) \boldsymbol{w_0} + \tau \sum_{i = 1}^{k} \left( V \left( \tau \right) \right)^{k - i} \left( \boldsymbol{g_i} - \boldsymbol{\tilde{g}_i} \right)\nonumber \\
				&+ \tau \sum_{i = 1}^{k} \left( U \left( t_{k - i} \right) - \left( V \left( \tau \right) \right)^{k - i} \right) \boldsymbol{g_i} + \sum_{i = 1}^{k} U \left( t_{k - i} \right) \boldsymbol{r_i} \left( \tau \right)\,.
			\end{align}
			
			By virtue of \eqref{eq:U_minus_V_general}, we obtain:
			\begin{gather}
				\left\Vert \left( U \left( t_k \right) - \left( V \left( \tau \right) \right)^k \right) \boldsymbol{w_0} \right\Vert_{\Hilb \times \Hilb} \leq a \left( t_k \right) \tau^{1/4} \left( \left\Vert \A \varphi_0 \right\Vert + \left\Vert \A^{1/2} \varphi_1 \right\Vert \right)\,,\label{eq:semilin_est_w0} \\
				\left\Vert \left( U \left( t_{k - i} \right) - \left( V \left( \tau \right) \right)^{k - i} \right) \boldsymbol{g_i} \right\Vert_{\Hilb \times \Hilb} \leq a \left( t_k \right) \tau^{1/4} J_k \left( f,\F \right)\,.\label{eq:semilin_est_gi}
			\end{gather}
			It follows from \eqref{eq:semilin_est_gi} that
			\begin{equation}\label{eq:semilin_sum_gi}
				\tau \left\Vert \sum_{i = 1}^{k} \left( U \left( t_{k - i} \right) - \left( V \left( \tau \right) \right)^{k - i} \right) \boldsymbol{g_i} \right\Vert_{\Hilb \times \Hilb} \leq \tau^{1/4} t_k a \left( t_k \right) J_k \left( f,\F \right)\,.
			\end{equation}
			
			Combining estimates \eqref{eq:semilin_rem1} and \eqref{eq:semilin_rem2}, we arrive at
			\begin{align}\label{eq:semilin_sum_rem_r}
				&\left\Vert \sum_{i = 1}^{k} U \left( t_{k - i} \right) \boldsymbol{r_i} \left( \tau \right) \right\Vert_{\Hilb \times \Hilb} \leq\nonumber \\ &\frac{\tau}{2} \alpha_2 t_k \left( 2 J_k \left( f,\F \right) + \left\Vert f^{\prime} \right\Vert_{C \left( I_k \right)} + \left\Vert u^{\prime} \right\Vert_{C \left( I_k \right)} + \left\Vert u^{\prime \prime} \right\Vert_{C \left( I_k \right)} + b \left\Vert \A^{1 / 2} u^{\prime} \right\Vert_{C \left( I_k \right)} \right)\,.
			\end{align}
			
			It remains to estimate the norm of the vector $\boldsymbol{g_i} - \boldsymbol{\tilde{g}_i}$. Taking into account estimate \eqref{eq:est_cosine_minus_W} and condition \eqref{eq:semilin_lip_cond}, we obtain:
			\begin{align}\label{eq:semilin_diff_gi_tild_gi}
				\left\Vert \boldsymbol{g_i} - \boldsymbol{\tilde{g}_i} \right\Vert_{\Hilb \times \Hilb} &\leq \left\Vert \left( \cos \left( \tau \A^{1 / 2} \right) - \W_1 \left( \tau \right) \right) \left( f \left( t_{i - 1} \right) + \F \left( u \left( t_{i - 1} \right) \right) \right) \right\Vert\nonumber \\
				&+ \left\Vert \F \left( u \left( t_{i - 1} \right) \right) - \widetilde{\F}_{i - 1} \right\Vert \leq \frac{3}{2} \tau J_k \left( f,\F \right)\nonumber \\
				&+ \alpha_1 \left( \left\Vert u \left( t_{i - 1} \right) - u_{i - 1}^{\left( 0 \right)} \right\Vert + \left\Vert u^{\prime} \left( t_{i - 1} \right) - u_{i - 1}^{\left( 1 \right)} \right\Vert + \left\Vert \B u \left( t_{i - 1} \right) - \B u_{i - 1}^{\left( 0 \right)} \right\Vert \right)\,.
			\end{align}
			
			By the subordination condition \eqref{eq:subord_cond} and the positive definiteness of the operator $\A$, it follows that
			\begin{align}\label{eq:semilin_diff_wv_prev}
				&\left\Vert u \left( t_{i - 1} \right) - u_{i - 1}^{\left( 0 \right)} \right\Vert + \left\Vert u^{\prime} \left( t_{i - 1} \right) - u_{i - 1}^{\left( 1 \right)} \right\Vert + \left\Vert \B u \left( t_{i - 1} \right) - \B u_{i - 1}^{\left( 0 \right)} \right\Vert \leq\nonumber \\
				&\left\Vert u^{\prime} \left( t_{i - 1} \right) - u_{i - 1}^{\left( 1 \right)} \right\Vert + \left( \frac{1}{\sqrt{\alpha}} + b \right) \left\Vert \A^{1/2} u \left( t_{i - 1} \right) - \A^{1/2} u_{i - 1}^{\left( 0 \right)} \right\Vert \leq b_1 \left\Vert \boldsymbol{w_{i - 1}} - \boldsymbol{v_{i - 1}} \right\Vert_{\Hilb \times \Hilb}\,.
			\end{align}
			
			Combining inequalities \eqref{eq:semilin_diff_gi_tild_gi} and \eqref{eq:semilin_diff_wv_prev} with the estimate $\left\Vert V \left( \tau \right) \right\Vert \leq 1$, we arrive at the following inequality:
			\begin{equation}\label{eq:semilin_sum_diff_gi_til_gi}
				\tau \left\Vert \sum_{i = 1}^{k} \left( V \left( \tau \right) \right)^{k - i} \left( \boldsymbol{g_i} - \boldsymbol{\tilde{g}_i} \right) \right\Vert_{\Hilb \times \Hilb} \leq \frac{3}{2} \tau t_k J_k \left( f,\F \right) + \tau \alpha_1 b_1 \sum_{i = 0}^{k - 1} \left\Vert \boldsymbol{w_i} - \boldsymbol{v_i} \right\Vert_{\Hilb \times \Hilb}\,.
			\end{equation}
			
			From equality \eqref{eq:semilin_error_eqt}, taking into account inequalities \eqref{eq:semilin_est_w0}, \eqref{eq:semilin_sum_gi}, \eqref{eq:semilin_sum_rem_r}, and \eqref{eq:semilin_sum_diff_gi_til_gi}, we obtain the following inequality:
			\begin{equation}\label{eq:semilin_discrete_gronwall}
				\left\Vert \boldsymbol{w_k} - \boldsymbol{v_k} \right\Vert_{\Hilb \times \Hilb} \leq \hat{c}_k \left( \tau \right) + \tau \alpha_1 b_1 \sum_{i = 0}^{k - 1} \left\Vert \boldsymbol{w_i} - \boldsymbol{v_i} \right\Vert_{\Hilb \times \Hilb}\,.
			\end{equation}
			Here, $\hat{c}_k \left( \tau \right)$ denotes the sum of the right-hand sides of inequalities \eqref{eq:semilin_est_w0}, \eqref{eq:semilin_sum_gi}, and \eqref{eq:semilin_sum_rem_r}, together with the first term on the right-hand side of inequality \eqref{eq:semilin_sum_diff_gi_til_gi}.
			
			From \eqref{eq:semilin_discrete_gronwall}, by induction, one implies:
			\begin{equation*}
				\left\Vert \boldsymbol{w_k} - \boldsymbol{v_k} \right\Vert_{\Hilb \times \Hilb} \leq \left( 1 + \mu \tau \right)^{k - 1} \left( \hat{c}_k \left( \tau \right) + \mu \tau \varepsilon_0 \right) \leq e^{\mu t_k} \left( \hat{c}_k \left( \tau \right) + \mu \tau \varepsilon_0 \right)\,,
			\end{equation*}
			where $\varepsilon_0 = \left\Vert \boldsymbol{w_0} - \boldsymbol{v_0} \right\Vert_{\Hilb \times \Hilb}$ and $\mu = \alpha_1 b_1$.
			
			This inequality provides the desired estimate.
		\end{proof}
		
		The significance of \thmref{prop:semilin_main_thm} is that it determines the order of approximation of the scheme \eqref{eq:semilin_rect_u_k0}–\eqref{eq:semilin_rect_u_k1} under almost minimal smoothness assumptions on the data of the problem. If the smoothness order of the data of problem \eqref{eq:abstr_semi-lin_hiperb}–\eqref{eq:init_data_semi-lin_hiperb} is increased, roughly speaking, by $1/2$, then the approximation order of scheme \eqref{eq:semilin_rect_u_k0}–\eqref{eq:semilin_rect_u_k1} becomes $\bigO \left( \tau \right)$. More precisely, we have the following theorem.
		\begin{theorem}
			Let $\varphi_0 \in D \left( \A^{3/2} \right)$ and $\varphi_1 \in D \left( \A \right)$. Moreover, suppose that conditions \hyperref[item:semilin_a]{\ref{item:semilin_a}} and \hyperref[item:semilin_c]{\ref{item:semilin_c}} are fulfilled with the operator $\A^{1/2}$ replaced by $\A$, while condition \hyperref[item:semilin_b]{\ref{item:semilin_b}} remains unchanged. Then, for the error of scheme \eqref{eq:semilin_rect_u_k0}–\eqref{eq:semilin_rect_u_k1}, the following estimate holds:
			\begin{equation*}
				\left\Vert \boldsymbol{w_k} - \boldsymbol{v_k} \right\Vert_{\Hilb \times \Hilb} \leq  \hat{b}_0 \tau\,, \quad k = 1,2,\ldots,n\,, \quad b_0 = \mathrm{const} > 0\,.
			\end{equation*}
		\end{theorem}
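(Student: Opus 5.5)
The proof follows the proof of \thmref{prop:semilin_main_thm} line by line. The only change is that every estimate relying on the minimal-regularity bound \eqref{eq:U_minus_V_general} is replaced by an $\bigO(\tau)$ estimate obtained from one extra half-power of $\A$.

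We again start from the error representation \eqref{eq:semilin_error_eqt} and estimate its four terms separately.

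\textbf{Initial-data term.} Since $\varphi_0\in D(\A^{3/2})$ and $\varphi_1\in D(\A)$, the vector $\boldsymbol{w_0}=(\A^{1/2}\varphi_0,\varphi_1)^\top$ has both components in $D(\A)$. We use the first-order estimate of \thmref{prop:thm_addit_order} with $i=2$. For $\boldsymbol{u}\in D(\A)\times D(\A)$ this reads
\begin{equation*}
\left\Vert\left[U(t_k)-(V(\tau))^k\right]\boldsymbol{u}\right\Vert_{\Hilb\times\Hilb}\le c_9\,\tau\, t_k\left\Vert A\boldsymbol{u}\right\Vert_{\Hilb\times\Hilb}.
\end{equation*}
Its proof uses only \eqref{eq:diff_U_V}, $\Vert V(\tau)\Vert\le 1$, and the local bound \eqref{eq:diff_UV_inv_A}. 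It therefore gives $c_9\tau t_k(\Vert\A^{3/2}\varphi_0\Vert+\Vert\A\varphi_1\Vert)$.

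\textbf{Source term.} By the strengthened condition \ref{item:semilin_a}, $\A f(t)$ is continuous, and by \ref{item:semilin_c}, $\A\F(u(t))$ is continuous. Since $\cos(\tau\A^{1/2})$ commutes with $\A$, we get $\Vert A\boldsymbol{g_i}\Vert\le \Vert\A f\Vert_{C(I_k)}+\Vert\A\F(u)\Vert_{C(I_k)}=:\tilde J_k$. The same bound then shows that $\tau\sum_i(U(t_{k-i})-V(\tau)^{k-i})\boldsymbol{g_i}$ is at most $c_9\tau t_k^2\tilde J_k$.

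\textbf{Quadrature remainders.} The bounds \eqref{eq:semilin_rem1} and \eqref{eq:semilin_rem2} are already $\bigO(\tau^2)$ per step under condition \ref{item:semilin_b}, which is unchanged. Summing them as in \eqref{eq:semilin_sum_rem_r} gives an $\bigO(\tau)$ contribution.

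\textbf{Term $\boldsymbol{g_i}-\boldsymbol{\tilde g_i}$.} Here \eqref{eq:est_cosine_minus_W} already gives $\tfrac32\tau\Vert\A^{1/2}(f+\F)\Vert$, which is $\bigO(\tau)$. This is unchanged, apart from bounding $J_k$ by $\alpha^{-1/2}\tilde J_k$ via positive definiteness. The Lipschitz part is handled exactly as before through \eqref{eq:semilin_diff_wv_prev}.

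Collecting these bounds gives the discrete Gronwall inequality \eqref{eq:semilin_discrete_gronwall}, now with $\hat c_k(\tau)=\bigO(\tau)$. Induction yields $\Vert\boldsymbol{w_k}-\boldsymbol{v_k}\Vert\le e^{\mu T}\hat c_k(\tau)\le\hat b_0\tau$, where $\hat b_0$ absorbs $e^{\mu T}$ and the data norms on $[0,T]$. Here $\varepsilon_0=0$ because $\boldsymbol{v_0}=\boldsymbol{w_0}$.

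\textbf{Main obstacle.} The delicate step is establishing the first-order global bound on $D(\A)\times D(\A)$ cleanly, rather than the $\tau^{1/4}$ bound. The key observation is that no smoothing estimate like \eqref{eq:final_sqrt_Av} is needed. Since $A$ commutes with $U$ and $V$, we can write $(U(\tau)-V(\tau))U(\tau)^{i-1}\boldsymbol{u}=(U(\tau)-V(\tau))A^{-1}\,U(\tau)^{i-1}A\boldsymbol{u}$. Then \eqref{eq:diff_UV_inv_A} applied in \eqref{eq:diff_Uk_Vk} bounds each of the $k$ terms by $\tfrac{11}{3}\tau^2\Vert A\boldsymbol{u}\Vert$, giving a total of $\tfrac{11}{3}\tau t_k\Vert A\boldsymbol{u}\Vert$.

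I will also check that the strengthened hypotheses make $\A\F(u(t))$ meaningful, and that the remainder estimates do not secretly require more than condition \ref{item:semilin_b}.
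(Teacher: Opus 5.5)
Your proposal is correct and matches the paper's proof. The paper likewise reruns the argument of \thmref{prop:semilin_main_thm} and changes only the bounds \eqref{eq:semilin_est_w0} and \eqref{eq:semilin_est_gi}, replacing them by the first-order estimate of \thmref{prop:thm_addit_order} with $i=2$ applied to $\boldsymbol{w_0}$ and $\boldsymbol{g_i}$, whose components now lie in $D(\A)$; the remainder, $\boldsymbol{g_i}-\boldsymbol{\tilde{g}_i}$ and discrete Gronwall steps carry over unchanged, and your direct derivation of that bound from \eqref{eq:diff_U_V} and \eqref{eq:diff_UV_inv_A} gives exactly the constant $c_9=11/3$.
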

		\begin{proof}
			The proof proceeds analogously to that of \thmref{prop:semilin_main_thm}. The main difference lies in deriving estimates similar to \eqref{eq:semilin_est_w0} and \eqref{eq:semilin_est_gi}; these estimates rely on the estimate stated in \thmref{prop:thm_addit_order} for $i = 2$.
		\end{proof}
		
		\section*{Funding}
		
		\noindent The second author, Z.V., was supported by the Shota Rustaveli National Science Foundation of Georgia (SRNSFG) under grant number FR-25-215.
		
		\section*{Conflicts of Interest}
		
		\noindent The authors declare no conflicts of interest.

		% ===================== BIBLIOGRAPHY OUTPUT LOCATION ==========================
		% --- BIBLIOGRAPHY -----------------------------------------------------------
		% References section with visible heading + PDF bookmark
		% Requires: \usepackage{biblatex} (with \addbibresource{...}) and \usepackage{hyperref}
		
		% % === METHOD: Start bibliography on a clean page (auto oneside/twoside) ===
		% \makeatletter                                   % enable access to class internals (@ macros)
		% \if@twoside\cleardoublepage\else\clearpage\fi   % right-hand page for twoside, plain new page for oneside
		% \makeatother                                    % restore normal category code for @
		
		% --- METHOD: Ensure correct hyperlink anchor --------------------------------
		\phantomsection                % Ensure correct hyperlink anchor for references (used with hyperref)
		
		% === METHOD: Print Bibliography with visible heading =======================
		% BibLaTeX creates a section-like heading ("References") and prints entries.
		% With hyperref loaded, this normally adds one PDF bookmark automatically.
		\printbibliography[
		heading=bibliography,   % use biblatex’s built-in section-level heading
		title={References}      % visible heading text
		]                         % no manual bookmark needed in most setups

@book {Kato1980,
	AUTHOR = {Kato, T.},
	TITLE = {Perturbation theory for linear operators},
	SERIES = {Classics in Mathematics},
	EDITION = {1980},
	PUBLISHER = {Springer-Verlag, Berlin},
	YEAR = {1995},
	PAGES = {xxii+619},
	ISBN = {3-540-58661-X},
	MRCLASS = {47A55 (46-00 47-00)},
	MRNUMBER = {1335452},
}

@article {Stone1932,
	AUTHOR = {Stone, M. H.},
	TITLE = {On one-parameter unitary groups in {H}ilbert space},
	JOURNAL = {Ann. of Math. (2)},
	FJOURNAL = {Annals of Mathematics. Second Series},
	VOLUME = {33},
	YEAR = {1932},
	NUMBER = {3},
	PAGES = {643--648},
	ISSN = {0003-486X,1939-8980},
	MRCLASS = {DML},
	MRNUMBER = {1503079},
	DOI = {10.2307/1968538},
	URL = {https://doi.org/10.2307/1968538},
}

@book {Yosida1980,
	AUTHOR = {Yosida, K.},
	TITLE = {Functional analysis},
	SERIES = {Classics in Mathematics},
	NOTE = {Reprint of the sixth (1980) edition},
	PUBLISHER = {Springer-Verlag, Berlin},
	YEAR = {1995},
	PAGES = {xii+501},
	ISBN = {3-540-58654-7},
	MRCLASS = {46-01 (47-01)},
	MRNUMBER = {1336382},
	DOI = {10.1007/978-3-642-61859-8},
	URL = {https://doi.org/10.1007/978-3-642-61859-8},
}

@book {ReedSimon1980,
	AUTHOR = {Reed, M. and Simon, B.},
	TITLE = {Methods of modern mathematical physics. {I}},
	EDITION = {Second},
	NOTE = {Functional analysis},
	PUBLISHER = {Academic Press, Inc. [Harcourt Brace Jovanovich, Publishers],
	New York},
	YEAR = {1980},
	PAGES = {xv+400},
	ISBN = {0-12-585050-6},
	MRCLASS = {46-01 (47-01)},
	MRNUMBER = {751959},
}

@book {Krein1971,
	AUTHOR = {Kre\u{\i}n, S. G.},
	TITLE = {Linear differential equations in {B}anach space},
	SERIES = {Translations of Mathematical Monographs, Vol. 29},
	NOTE = {Translated from the Russian by J. M. Danskin},
	PUBLISHER = {American Mathematical Society, Providence, RI},
	YEAR = {1971},
	PAGES = {v+390},
	MRCLASS = {34G05 (47E05 47F05)},
	MRNUMBER = {342804},
}

@article {RogavaOperSemigroup2022,
	AUTHOR = {Rogava, J. L.},
	TITLE = {Approximation of operator semigroups using linear-fractional
	operator functions and weighted averages},
	NOTE = {Translation of Funktsional. Anal. i Prilozhen. {\bf 56}
	(2022), no. 2, 47--63. },
	JOURNAL = {Funct. Anal. Appl.},
	FJOURNAL = {Functional Analysis and its Applications},
	VOLUME = {56},
	YEAR = {2022},
	NUMBER = {2},
	PAGES = {116--129},
	ISSN = {0016-2663,1573-8485},
	MRCLASS = {47A58 (41A30 47B65 47D06)},
	MRNUMBER = {4500302},
	MRREVIEWER = {Loredana-Florentina\ Iambor},
	DOI = {10.1134/s0016266322020058},
	URL = {https://doi.org/10.1134/s0016266322020058},
}

@article {Sova1966,
	AUTHOR = {Sova, M.},
	TITLE = {Cosine operator functions},
	JOURNAL = {Rozprawy Mat.},
	FJOURNAL = {Rozprawy Matematyczne},
	VOLUME = {49},
	YEAR = {1966},
	PAGES = {47},
	ISSN = {0860-2581},
	MRCLASS = {47.50},
	MRNUMBER = {193525},
	MRREVIEWER = {J.\ Gil de Lamadrid},
}

@article {Hoppe1982,
	AUTHOR = {Hoppe, R. H. W.},
	TITLE = {Discrete approximations of cosine operator functions. {I}},
	JOURNAL = {SIAM J. Numer. Anal.},
	FJOURNAL = {SIAM Journal on Numerical Analysis},
	VOLUME = {19},
	YEAR = {1982},
	NUMBER = {6},
	PAGES = {1110--1128},
	ISSN = {0036-1429},
	MRCLASS = {65J10 (34G10 47D05)},
	MRNUMBER = {679655},
	MRREVIEWER = {H.\ O.\ Fattorini},
	DOI = {10.1137/0719081},
	URL = {https://doi.org/10.1137/0719081},
}

@article {Goldstein1974,
	AUTHOR = {Goldstein, J. A.},
	TITLE = {On the convergence and approximation of cosine functions},
	JOURNAL = {Aequationes Math.},
	FJOURNAL = {Aequationes Mathematicae},
	VOLUME = {11},
	YEAR = {1974},
	PAGES = {201--205},
	ISSN = {0001-9054,1420-8903},
	MRCLASS = {47D99},
	MRNUMBER = {358435},
	MRREVIEWER = {G.\ Da-Prato},
	DOI = {10.1007/BF01832857},
	URL = {https://doi.org/10.1007/BF01832857},
}

@article {TravisWebb1978,
	AUTHOR = {Travis, C. C. and Webb, G. F.},
	TITLE = {Cosine families and abstract nonlinear second order
	differential equations},
	JOURNAL = {Acta Math. Acad. Sci. Hungar.},
	FJOURNAL = {Acta Mathematica. Academiae Scientiarum Hungaricae},
	VOLUME = {32},
	YEAR = {1978},
	NUMBER = {1-2},
	PAGES = {75--96},
	ISSN = {0001-5954,1588-2632},
	MRCLASS = {34G05 (47D05)},
	MRNUMBER = {499581},
	MRREVIEWER = {Vadim\ Komkov},
	DOI = {10.1007/BF01902205},
	URL = {https://doi.org/10.1007/BF01902205},
}

@article {Baker1976,
	AUTHOR = {Baker, G. A.},
	TITLE = {Error estimates for finite element methods for second order
	hyperbolic equations},
	JOURNAL = {SIAM J. Numer. Anal.},
	FJOURNAL = {SIAM Journal on Numerical Analysis},
	VOLUME = {13},
	YEAR = {1976},
	NUMBER = {4},
	PAGES = {564--576},
	ISSN = {0036-1429},
	MRCLASS = {65N30},
	MRNUMBER = {423836},
	MRREVIEWER = {Philip\ Brenner},
	DOI = {10.1137/0713048},
	URL = {https://doi.org/10.1137/0713048},
}

@article {BakerBramble1979,
	AUTHOR = {Baker, G. A. and Bramble, J. H.},
	TITLE = {Semidiscrete and single step fully discrete approximations for
	second order hyperbolic equations},
	JOURNAL = {RAIRO Anal. Num\'{e}r.},
	FJOURNAL = {RAIRO Analyse Num\'{e}rique},
	VOLUME = {13},
	YEAR = {1979},
	NUMBER = {2},
	PAGES = {75--100},
	ISSN = {0399-0516,0516-2777},
	MRCLASS = {65N30},
	MRNUMBER = {533876},
	MRREVIEWER = {Alexander\ Doktor},
	DOI = {10.1051/m2an/1979130200751},
	URL = {https://doi.org/10.1051/m2an/1979130200751},
}

@article {BakerDougalisSerbin1979,
	AUTHOR = {Baker, G. A. and Dougalis, V. A. and Serbin, S. M.},
	TITLE = {High order accurate two-step approximations for hyperbolic
	equations},
	JOURNAL = {RAIRO Anal. Num\'{e}r.},
	FJOURNAL = {RAIRO Analyse Num\'{e}rique},
	VOLUME = {13},
	YEAR = {1979},
	NUMBER = {3},
	PAGES = {201--226},
	ISSN = {0399-0516,0516-2777},
	MRCLASS = {65M05},
	MRNUMBER = {543933},
	MRREVIEWER = {David\ Gottlieb},
	DOI = {10.1051/m2an/1979130302011},
	URL = {https://doi.org/10.1051/m2an/1979130302011},
}

@article {BakerDougalisSerbin1980,
	AUTHOR = {Baker, G. A. and Dougalis, V. A. and Serbin, S. M.},
	TITLE = {An approximation theorem for second-order evolution equations},
	JOURNAL = {Numer. Math.},
	FJOURNAL = {Numerische Mathematik},
	VOLUME = {35},
	YEAR = {1980},
	NUMBER = {2},
	PAGES = {127--142},
	ISSN = {0029-599X,0945-3245},
	MRCLASS = {34G10 (65J10 65L05)},
	MRNUMBER = {585242},
	MRREVIEWER = {V.\ G.\ Kolomiets},
	DOI = {10.1007/BF01396311},
	URL = {https://doi.org/10.1007/BF01396311},
}

@article {Bales1993,
	AUTHOR = {Bales, L. A.},
	TITLE = {Semidiscrete and single step fully discrete finite element
	approximations for second order hyperbolic equations with
	nonsmooth solutions},
	JOURNAL = {RAIRO Mod\'{e}l. Math. Anal. Num\'{e}r.},
	FJOURNAL = {RAIRO Mod\'{e}lisation Math\'{e}matique et Analyse
	Num\'{e}rique},
	VOLUME = {27},
	YEAR = {1993},
	NUMBER = {1},
	PAGES = {55--63},
	ISSN = {0764-583X},
	MRCLASS = {65M60},
	MRNUMBER = {1204628},
	MRREVIEWER = {Chuan\ Miao\ Chen},
	DOI = {10.1051/m2an/1993270100551},
	URL = {https://doi.org/10.1051/m2an/1993270100551},
}

@article {Kacur1984,
	AUTHOR = {Ka\v{c}ur, J.},
	TITLE = {Application of {R}othe's method to perturbed linear hyperbolic
	equations and variational inequalities},
	JOURNAL = {Czechoslovak Math. J.},
	FJOURNAL = {Czechoslovak Mathematical Journal},
	VOLUME = {34(109)},
	YEAR = {1984},
	NUMBER = {1},
	PAGES = {92--106},
	ISSN = {0011-4642},
	MRCLASS = {35L75 (34G20 47H99)},
	MRNUMBER = {731982},
	MRREVIEWER = {Sadakazu\ Aizawa},
}

@article {Pultar1984,
	AUTHOR = {Pultar, M.},
	TITLE = {Solutions of abstract hyperbolic equations by {R}othe method},
	JOURNAL = {Apl. Mat.},
	FJOURNAL = {\v{C}eskoslovensk\'{a} Akademie V\v{e}d. Aplikace Matematiky},
	VOLUME = {29},
	YEAR = {1984},
	NUMBER = {1},
	PAGES = {23--39},
	ISSN = {0373-6725},
	MRCLASS = {34G20 (65M20)},
	MRNUMBER = {729950},
	MRREVIEWER = {Nicolae\ H.\ Pavel},
}

@article {SobolevskiiChebotareva1977,
	AUTHOR = {Sobolevski\u{\i}, P. E. and \v{C}ebotareva, L. M.},
	TITLE = {Approximate solution of the {C}auchy problem for an abstract
	hyperbolic equation by the method of lines},
	JOURNAL = {Izv. Vys\v{s}. U\v{c}ebn. Zaved. Matematika},
	FJOURNAL = {Izvestija Vys\v{s}ih U\v{c}ebnyh Zavedeni\u{\i} Matematika},
	YEAR = {1977},
	NUMBER = {5(180)},
	PAGES = {103--116},
	ISSN = {0021-3446},
	MRCLASS = {65M20},
	MRNUMBER = {520156},
	MRREVIEWER = {Russell\ C.\ Thompson},
}

@incollection {Arendt2004,
	AUTHOR = {Arendt, W.},
	TITLE = {Semigroups and evolution equations: functional calculus,
	regularity and kernel estimates},
	BOOKTITLE = {Evolutionary equations. {V}ol. {I}},
	SERIES = {Handb. Differ. Equ.},
	PAGES = {1--85},
	PUBLISHER = {North-Holland, Amsterdam},
	YEAR = {2004},
	ISBN = {0-444-51131-8},
	MRCLASS = {47D06 (34G10 34G20 35B65 35C15 35K90 47A60 47N20)},
	MRNUMBER = {2103696},
	MRREVIEWER = {Atsushi\ Yagi},
}

@book {AVV2026,
	AUTHOR = {Arendt, W. and Vogt, H. and Voigt, J.},
	TITLE = {Form methods for evolution equations},
	SERIES = {Operator Theory: Advances and Applications},
	VOLUME = {316},
	PUBLISHER = {Birkh{\"a}user/Springer, Cham},
	YEAR = {[2026] \copyright 2026},
	PAGES = {x+360},
	ISBN = {978-3-032-28410-5; 978-3-032-28411-2},
}

@book {HP1974,
	AUTHOR = {Hille, E. and Phillips, R. S.},
	TITLE = {Functional analysis and semi-groups},
	SERIES = {American Mathematical Society Colloquium Publications, Vol.
	XXXI},
	NOTE = {Third printing of the revised edition of 1957},
	PUBLISHER = {American Mathematical Society, Providence, RI},
	YEAR = {1974},
	PAGES = {xii+808},
	MRCLASS = {47-02 (46-02)},
	MRNUMBER = {423094},
}

@book {ZNI2024,
	AUTHOR = {Zagrebnov, V. A. and Neidhardt, H. and Ichinose, T.},
	TITLE = {Trotter-{K}ato product formul\ae },
	SERIES = {Operator Theory: Advances and Applications},
	VOLUME = {296},
	PUBLISHER = {Birkh\"{a}user/Springer, Cham},
	YEAR = {[2024] \copyright 2024},
	PAGES = {xx+873},
	ISBN = {978-3-031-56719-3; 978-3-031-56720-9},
	MRCLASS = {47-02 (35K90 47A58 47A60 47B25 47Dxx)},
	MRNUMBER = {4763438},
	MRREVIEWER = {Kiyoko\ Furuya},
	DOI = {10.1007/978-3-031-56720-9},
	URL = {https://doi.org/10.1007/978-3-031-56720-9},
}

@article {GRT2002,
	AUTHOR = {Gegechkori, Z. and Rogava, J. and Tsiklauri, M.},
	TITLE = {High degree precision decomposition method for the evolution
	problem with an operator under a split form},
	JOURNAL = {M2AN Math. Model. Numer. Anal.},
	FJOURNAL = {M2AN. Mathematical Modelling and Numerical Analysis},
	VOLUME = {36},
	YEAR = {2002},
	NUMBER = {4},
	PAGES = {693--704},
	ISSN = {0764-583X,1290-3841},
	MRCLASS = {34G10 (65M12 65M15)},
	MRNUMBER = {1932309},
	MRREVIEWER = {Cesar\ Palencia de Lara},
	DOI = {10.1051/m2an:2002030},
	URL = {https://doi.org/10.1051/m2an:2002030},
}

@article {GRT2004,
	AUTHOR = {Gegechkori, Z. and Rogava, J. and Tsiklauri, M.},
	TITLE = {The fourth order accuracy decomposition scheme for an
	evolution problem},
	JOURNAL = {M2AN Math. Model. Numer. Anal.},
	FJOURNAL = {M2AN. Mathematical Modelling and Numerical Analysis},
	VOLUME = {38},
	YEAR = {2004},
	NUMBER = {4},
	PAGES = {707--722},
	ISSN = {0764-583X,1290-3841},
	MRCLASS = {65J05 (47D03)},
	MRNUMBER = {2087731},
	MRREVIEWER = {Jacques\ Rappaz},
	DOI = {10.1051/m2an:2004031},
	URL = {https://doi.org/10.1051/m2an:2004031},
}

@article {LRT2004,
	AUTHOR = {Lomtadze, T. and Rogava, J. and Tsiklauri, M.},
	TITLE = {Approximate solution of {C}auchy problem for abstract
	hyperbolic equation using unitary group approximation method},
	JOURNAL = {Proc. I. Vekua Inst. Appl. Math.},
	FJOURNAL = {Proceedings of Ilia Vekua Institute of Applied Mathematics},
	VOLUME = {54/55},
	YEAR = {2004/05},
	PAGES = {55--64, 94},
	ISSN = {1512-004X},
	MRCLASS = {47D06 (65M12)},
	MRNUMBER = {2599419},
	MRREVIEWER = {Hassan\ Emamirad},
}
		
		% === METHOD: (Optional) Force-add a PDF bookmark ============================
		% Use ONLY if your class/template fails to produce the bookmark above.
		% Toggle the switch below to true when you need the manual bookmark.
		\newif\ifForceBibBookmark
		% \ForceBibBookmarkfalse    % default: off (prevents duplicate bookmarks)
		\ForceBibBookmarktrue     % turn on if class/template does not create bookmark
		
		\ifForceBibBookmark
		\phantomsection                  % uncomment if you need to adjust the anchor position
		\pdfbookmark[1]{References}{bib} % level-1 bookmark titled "References", anchor name "bib"
		\fi
		
	\end{document}